\theoremstyle{plain}
\newtheorem{theorem}{Theorem}[section]
\newtheorem{lemma}[theorem]{Lemma}
\newtheorem{prop}[theorem]{Proposition}
\newtheorem{definition}[theorem]{Definition}
\newtheorem{assumption}[theorem]{Assumption}
\theoremstyle{remark}
\newtheorem{remark}[theorem]{Remark}
\numberwithin{equation}{section}
\newcommand{\ind}{1\!\kern-1pt \mathrm{I}}
\newcommand{\rsto}{]\!\kern-1.8pt ]}
\newcommand{\lsto}{[\!\kern-1.7pt [}
\numberwithin{equation}{section}
\newcommand{\id}{\operatorname{id}}
\begin{document}
\title{An elementary proof of the reconstruction theorem}
\author{Harprit Singh and Josef Teichmann}
\address{ETH Z\"urich, D-Math, R\"amistrasse 101, CH-8092 Z\"urich, Switzerland}
\email{jteichma@math.ethz.ch}
\thanks{The authors gratefully acknowledge the support from ETH-foundation.}
\curraddr{}
\begin{abstract}
The reconstruction theorem, a cornerstone of Martin Hairer's theory of regularity structures, appears in this article as the unique extension of the explicitly given reconstruction operator on the set of smooth models due its inherent Lipschitz properties. This new proof is a direct consequence of constructions of mollification procedures on spaces of models and modelled distributions: more precisely, for an abstract model $Z$ of a given regularity structure, a mollified model is constructed, and additionally, any modelled distribution $f$ can be approximated by elements of a universal subspace of modelled distribution spaces. These considerations yield in particular a non-standard approximation results for rough path theory. All results are formulated in a generic $(p,q)$ Besov setting.
\end{abstract}
\keywords{regularity structures, reconstruction theorem, wavelet analysis, model, modelled distribution}
\subjclass[2010]{60H15}

\maketitle
\section{Introduction}
Undoubtedly the reconstruction theorem is the single most important fundamental result of the theory of regularity structures: given a family of local expansions of a possibly existing distributional object, this theorem allows to actually reconstruct this object. So far proofs use deep results from wavelet analysis, see~\cite{Hairer2014} and ~\cite{Hairer2017}, or results from para-controlled distributions, see~\cite{Gubinelli2015} or~\cite{MP18}, or semigroup approaches, see~\cite{OSSW18}. By means of the reconstruction operator one can lift many questions on classes of distributions needed for solving certain singular equations, for instance ubiquitous existence (maybe up to renormalisation), uniqueness and regularity, to questions on local expansions. This has been successfully applied for singular stochastic partial differential equations like the KPZ equation or the $\Phi^4_3$ equation, where instead of solving an appropriately re-normalized version on a space of Schwartz distributions, one solves a lifted version on a space of modelled distributions and re-constructs from this abstract solution by the reconstruction operator a Schwartz distribution. The reconstruction operator is also an important tool to provide existence proofs, see~\cite{LPT18} and the references therein for recent applications, in particular also in the case $ \gamma < 0 $. Notice that we do not show existence of the reconstruction operator in this case here.

Given a degree $ \gamma > 0 $ the reconstruction operator has two main input slots: a model $Z$ and a modelled distribution $f$. It is a striking fact that the reconstruction operator depends in a Lipschitz manner on models and modelled distributions, however, notice that the space of models $ \mathcal{M}_{\mathcal{T}} $ is non-linear and the \emph{linear} space of modelled distributions $\mathcal{D}^\gamma$ depends crucially on the given model. Whence the reconstruction operator is defined on a "bundle", which we shall call in sequel the \emph{modelled distribution bundle}, as a Lipschitz map. If the underlying model is \emph{smooth}, i.e.~all involved distributions are actually smooth functions, then the reconstruction operator can be given explicitly (even for continuous models, e.g.~models where the distributions are actually continuous functions). The deep part of the results is the fact that the re-construction operator exists beyond smooth models.

We shall add the following observations to the fundamentals of the theory of regularity structures:
\begin{itemize}
\item Given a model $ Z $ we can construct mollified models $Z^\epsilon$ (which are smooth) in a canonical way. These converge under a slight modification of the involved topologies to $Z$.
\item We can globally describe a dense subset of modelled distributions for \emph{any} model, which can be continuously deformed along Lipschitz continuous curves of models.
\item It can be proved \emph{directly} that the explicitly given reconstruction operator is Lipschitz continuous on the  modelled distribution bundle restricted to smooth models. It is therefore possible by elementary means to extend the reconstruction operator from smooth models to general model. In contrast to the so far existing proofs of the reconstruction theorem we do not need deep results from wavelet analysis to establish this extension. 
\end{itemize}
For the readers who might find Section \ref{section 1} lengthy we point directly to the main result: Theorem \ref{thm_approximating_smooth_models}. We also point out that Section \ref{section 1} and \ref{subsec_dense_subset} can be read independently of each other and the rest of the article.
 
\section{Mollification in regularity structures}\label{section 1}
In this section we present some basic definitions of regularity structures, see~\cite{Hairer2014} and \cite{Hairer2017} for all further details, and we show how generic models can be mollified. This will be reminiscent of the lifting the action of singular kernels to modelled distributions, however, with some essential differences.
\subsection{Basic Setup for regularity structures}
Let $\mathcal{T}=(A,T,G)$ be a regularity structure and denote by $\mathcal{M}_{\mathcal{T}}$ the space of all models for $\mathcal{T}$. Recall that a model $Z\in \mathcal{M}_\mathcal{T}$ is a pair $Z=(\Pi, \Gamma)$, consisting of 
\begin{enumerate}
\item a map $\Pi: \mathbb{R}^d\to L(T, \mathcal{D}'(\mathbb{R}^d)),\ x\mapsto \Pi_x$, such that 
\[
\|\Pi\|_\gamma:=\sup_{x\in \mathbb{R}^d} \sup_{\zeta\in A\cap (-\infty,\gamma) } \|\Pi\|_{x,\zeta}<\infty
\]
for all $\gamma\in\mathbb{R}$, where $$\|\Pi\|_{x,\zeta}:= \sup_{\tau \in T_\zeta}\sup_{\varphi\in \mathfrak{B}^r} \sup_{\delta<1} \frac{|\langle \Pi_x \tau, \varphi^\delta_x \rangle |}{|\tau| \delta^\zeta} \, ,$$
for given $ x \in \mathbb{R}^d $ and $ \zeta \in A $,
\item a map $ \Gamma: \mathbb{R}^d\times \mathbb{R}^d \to G, \ (x,y)\mapsto \Gamma_{x,y}$, which satisfies the algebraic conditions
$$\Pi_x\Gamma_{x,y} = \Pi_y, \; \Gamma_{x,y} \Gamma_{y,z} = \Gamma_{x,z} $$ 
for all $ x,y,z \in \mathbb{R}^d $, and the analytic condition 
$$
\|\Gamma\|_{\gamma}:=\sup_{ \|x-y\|_s<1} \sup_{\zeta\in A\cap (-\infty,\gamma)}  \|\Gamma\|_{x,y,\zeta}<\infty
$$ 
for all $\gamma\in\mathbb{R}$, where
$$\|\Gamma\|_{x,y,\zeta}:=\sup_{A\ni \beta<\zeta}\sup_{\tau \in T_\zeta} \frac{|\Gamma_{x,y}\tau|_\beta}{|x-y|_{\mathfrak{s}}^{\zeta-\beta}|\tau|} \, ,$$ 
for $ x,y \in \mathbb{R}^d $ and $ \zeta \in A $.
\end{enumerate}

The space $\mathcal{M}_\mathcal{T}$ has a natural metric induced by the semi-norms\footnote{Note that $\mathcal{M}_\mathcal{T}$ is not a vector space, thus strictly speaking these are not semi-norms, but $\mathcal{M}_\mathcal{T}$ can be embedded in a vector space, where these are indeed semi-norms.} $\|Z\|_\gamma:=\|\Pi\|_\gamma+\|\Gamma\|_\gamma$ given by
$$ d(Z,\bar{Z})=\sum_{n\in \mathbb{N}} \frac{\|Z-\bar{Z}\|_n}{2^{n}(1+\|Z-\bar{Z}\|_n) },$$ for $Z,\bar{Z}\in \mathcal{M}_\mathcal{T}.$ We call a model $Z=(\Pi,\Gamma)\in \mathcal{M}_\mathcal{T}$ smooth, if for all $x\in \mathbb{R}$ and all $\tau\in T$ the actual distributions $\Pi_x\tau$ are smooth functions.

\begin{remark}
Note that this definition is from \cite{Hairer2017} and differs from the one in \cite{Hairer2014} by imposing global bounds instead of local ones. Of course all of our results can be adapted to the setting of \cite{Hairer2014} by simply keeping track of the compact sets on which the relevant bounds hold.
\end{remark}

\subsection{A natural question}
Given a regularity structure $\mathcal{T}$ are smooth models, i.e.~models where all involved distributions are actually smooth functions, dense in $\mathcal{M}_\mathcal{T}$?

The answer is no, as the following simple example on $\mathbb{R}$ illustrates:
\begin{itemize}
\item Let $\alpha\in (0,1)$ and $A=\{0,\alpha\}$,
\item let $T=T_0\bigoplus T_\alpha$ where$T_0=\langle \mathbf{1}\rangle$ and $T_{\alpha}=\langle\tau\rangle$,
\item and let $G$ consist of all linear maps satisfying $\Gamma \tau-\tau \in T_0$ and $\Gamma \mathbf{1}=\mathbf{1}$. (These are just all maps that satisfy the axioms for a structure group).
\end{itemize}
 
Now denote by $L^\infty$ the space of all bounded (measurable) maps on $\mathbb{R}$ to itself and by $C^\alpha_0$ the space of all continuous  functions $h$, such that $h(0)=0$ and $\sup_{|t-s|<1}\frac{|h(s)-h(t)|}{|t-s|^\alpha}<\infty$.  We have the following characterization of $\mathcal{M}_\mathcal{T}$ for the above regularity structure:
\begin{prop}
The map $$L^\infty\times C^\alpha_0\to \mathcal{M}_\mathcal{T},\  (f,h)\mapsto Z^{f,h}=(\Pi^{f,h},\Gamma^{h}),$$
where 
$$\Pi_x^{f,h} \mathbf{1} (y)=f(y), \ \Pi_{x}^{f,h}\tau(y)=(h(y)-h(x))f(y),$$
and 
$$\Gamma_{x,y}^{h}\tau= \tau-(h(x)-h(y))\mathbf{1}$$
is a bi-Lipschitz homeomorphism.   
\end{prop}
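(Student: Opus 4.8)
The plan is to extract the four assertions folded into the statement — that $Z^{f,h}$ is a model, that $(f,h)\mapsto Z^{f,h}$ is injective, that it is surjective, and that it together with its inverse is bi-Lipschitz on bounded sets — and to carry out all of them by direct computation except for surjectivity, which is the only substantive point. First I would verify that $Z^{f,h}$ is a model. The algebraic identities $\Pi^{f,h}_x\Gamma^h_{x,y}=\Pi^{f,h}_y$ and $\Gamma^h_{x,y}\Gamma^h_{y,z}=\Gamma^h_{x,z}$ are one-line checks with the explicit formulas, the second being additivity of $(x,y)\mapsto h(x)-h(y)$. For the analytic bounds, testing $\Pi^{f,h}_x\mathbf 1$ against $\varphi^\delta_x$ gives $|\langle\Pi^{f,h}_x\mathbf 1,\varphi^\delta_x\rangle|\le\|f\|_{L^\infty}\|\varphi\|_{L^1}$, hence $\|\Pi^{f,h}\|_{x,0}\lesssim\|f\|_{L^\infty}$; testing $\Pi^{f,h}_x\tau$, and using that $\varphi^\delta_x$ is supported in a ball of radius $\delta<1$ on which $|h(\cdot)-h(x)|\le\|h\|_{C^\alpha_0}\delta^\alpha$, gives $\|\Pi^{f,h}\|_{x,\alpha}\lesssim\|f\|_{L^\infty}\|h\|_{C^\alpha_0}$; and for $\Gamma^h$ only the pair $(\zeta,\beta)=(\alpha,0)$ contributes, with $\|\Gamma^h\|_{x,y,\alpha}=|h(x)-h(y)|/|x-y|^\alpha$. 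Thus $\|Z^{f,h}\|_\gamma\lesssim\|f\|_{L^\infty}(1+\|h\|_{C^\alpha_0})$ for every $\gamma$; since $Z^{f,h}-Z^{\bar f,\bar h}$ is again of the same shape (bilinear in the data) the same estimates applied to it yield Lipschitz continuity of $(f,h)\mapsto Z^{f,h}$ on bounded subsets of $L^\infty\times C^\alpha_0$.

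For injectivity I would write the inverse down explicitly. Given $Z=(\Pi,\Gamma)$, the function $f:=\Pi_x\mathbf 1$ is independent of $x$ because $\Gamma_{x,y}\mathbf 1=\mathbf 1$ and $\Pi_x\Gamma_{x,y}=\Pi_y$ force $\Pi_x\mathbf 1=\Pi_y\mathbf 1$, while $h$ is recovered, up to the sign fixed by the convention, as the coefficient of $\mathbf 1$ in $\Gamma_{\cdot,0}\tau-\tau\in T_0$. Applied to $Z^{f,h}$ these formulas return $(f,h)$, so the map is injective; moreover $\|f-\bar f\|_{L^\infty}$ is recovered by letting $\delta\to0$ in $\langle f-\bar f,\varphi^\delta_x\rangle$ against a suitable bump $\varphi\in\mathfrak B^r$, and $\|h-\bar h\|_{C^\alpha_0}$ is read off $\Gamma$ directly, so each is dominated by $\|Z^{f,h}-Z^{\bar f,\bar h}\|_\gamma$ for $\gamma>\alpha$ and the inverse is Lipschitz.

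The real work is surjectivity: given an arbitrary $Z=(\Pi,\Gamma)\in\mathcal{M}_{\mathcal{T}}$, define $f$ and $h$ by the formulas just described and establish (i) $f\in L^\infty$, (ii) $h\in C^\alpha_0$, (iii) $Z=Z^{f,h}$. For (i), the bound $\|\Pi\|_{x,0}<\infty$ says $|\langle f,\varphi^\delta_x\rangle|$ is bounded uniformly over $x$, $\delta<1$ and $\varphi\in\mathfrak B^r$; specialising to a fixed even mollifier $\rho$ rescaled into $\mathfrak B^r$ shows $\|\rho_\delta*f\|_{L^\infty}$ is bounded uniformly in $\delta$, and weak-$*$ compactness of norm-bounded subsets of $L^\infty=(L^1)^*$ together with $\rho_\delta*f\to f$ in $\mathcal{D}'$ identifies $f$ with a bounded measurable function. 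For (ii), write $\Gamma_{x,y}\tau=\tau+c(x,y)\mathbf 1$; the cocycle law gives $c(x,x)=0$ and $c(x,y)=c(x,0)-c(y,0)$, so $c(x,y)=h(y)-h(x)$ with $h(0)=0$ for $h(\cdot):=-c(\cdot,0)$, and $\|\Gamma\|_{x,y,\alpha}<\infty$ gives precisely $|h(x)-h(y)|\le\|\Gamma\|_\gamma|x-y|^\alpha$ for $|x-y|<1$, so $h\in C^\alpha_0$.

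For (iii), $\Pi_x\mathbf 1=f$ and $\Gamma_{x,y}\tau=\tau+(h(y)-h(x))\mathbf 1$ hold by construction, and applying $\Pi_x\Gamma_{x,y}=\Pi_y$ to $\tau$ gives $\Pi_y\tau-\Pi_x\tau=(h(y)-h(x))f$, so the distribution $g:=\Pi_x\tau-h(x)f$ does not depend on the base point $x$. The bound $\|\Pi\|_{x,\alpha}<\infty$ forces $|\langle\Pi_x\tau,\varphi^\delta_x\rangle|\lesssim\delta^\alpha\to0$, and comparing this with the a.e.\ convergence $\langle f,\varphi^\delta_x\rangle\to f(x)$ (for $\varphi$ an approximate identity) identifies $g$, as a distribution, with the pointwise product $-h(\cdot)f(\cdot)$; hence $\Pi_x\tau$ is the prescribed multiple of $f$, and $Z=Z^{f,h}$. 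I expect the only genuine obstacles to lie inside (i) and (iii): promoting the a priori distributions $\Pi_x\mathbf 1$ and $\Pi_x\tau$ to honest ($L^\infty$, respectively $L^\infty_{\mathrm{loc}}$) functions via mollification and weak-$*$ compactness, and then exploiting the degree-$\alpha$ vanishing of $\Pi_x\tau$ at its own base point to pin down the base-point-free distribution $g$ as the pointwise product of $h$ and $f$. Everything else is bookkeeping with the explicit formulas and the model semi-norms.
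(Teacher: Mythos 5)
The paper states this proposition without proof, so what matters is whether your argument stands on its own; the strategy is the right one, and the two nontrivial ingredients are both sound. The observation worth highlighting is the one you used to prove $\Pi_x\mathbf 1\in L^\infty$: the model bound holds for \emph{all} $\varphi\in\mathfrak{B}^r$, not only those with vanishing mean, so the $\zeta=0$ bound says strictly more than $\Pi_x\mathbf 1\in B^0_{\infty,\infty}$ --- it gives a uniform bound on every mollification, and weak-$*$ compactness of bounded sets in $L^\infty=(L^1)^*$ then promotes the distribution $\Pi_x\mathbf 1$ to an honest bounded function. The cocycle extraction of $h$ and the Lebesgue-point identification of the base-point-free distribution $g$ with a pointwise product are likewise correct in substance.

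Two things should be repaired. First, the printed formulas for $\Pi^{f,h}$ and $\Gamma^h$ are mutually inconsistent: inserting them into $\Pi_x\Gamma_{x,y}=\Pi_y$ gives $\bigl(h(z)-2h(x)+h(y)\bigr)f(z)$ on the left and $\bigl(h(z)-h(y)\bigr)f(z)$ on the right, which agree only for constant $h$. The sign on $\Gamma^h$ must be reversed, or equivalently one should have $\Pi_x^{f,h}\tau(y)=(h(x)-h(y))f(y)$. Your own derivation in step (iii) in fact produces the latter: with $h(\cdot)=-c(\cdot,0)$ you get $\Pi_y\tau-\Pi_x\tau=(h(y)-h(x))f$, then $g=\Pi_x\tau-h(x)f=-hf$, hence $\Pi_x\tau(y)=(h(x)-h(y))f(y)$, opposite in sign to what the proposition prints. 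Since $C^\alpha_0$ is closed under $h\mapsto-h$ this is harmless once noticed, but it must be noticed --- a surjectivity argument that recovers a formula with a flipped sign either contains an error or disproves the statement as written, and you should say which. Second, before invoking the a.e.\ limit to identify $g$, state why $g$ is a function at all: the bound $|\langle g,\varphi_x^\delta\rangle|\lesssim\delta^\alpha+|h(x)|\,\|f\|_{L^\infty}$, uniform in $\delta$ and locally uniform in $x$, gives $g\in L^\infty_{\mathrm{loc}}$ by the same weak-$*$ argument you used for $f$, and only then does Lebesgue differentiation turn $\lim_{\delta\to0}\langle g,\varphi^\delta_x\rangle$ into $g(x)$ a.e.; the phrase ``comparing this with the a.e.\ convergence\ldots identifies $g$'' elides that step.
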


The well known fact that smooth functions are not dense in $\mathcal{C}^\alpha$, which also holds for $\mathcal{C}^\alpha_0$, yields a counter example to the density of smooth models in $\mathcal{M}_\mathcal{T}$. This construction also yields a counterexample if one imposes the natural condition $\Pi\mathbf{1}=1$, which corresponds to $f=1$. Thus, in general smooth models are not dense in $\mathcal{M}_\mathcal{T}$, at least with respect to the natural topology. In the next section we introduce a slightly weaker topology, where this deficiency is cured.

\subsection{An slightly weakened topology for models}\label{subsec topologies}
Recall the fact that any $\mathcal{C}^{\alpha}$ function $f$ (or distribution, if we allow for $\alpha<0$) can be approximated by smooth functions in the $\mathcal{C}^{\alpha-\epsilon}$-norm and furthermore the approximating sequence can be chosen such that its $\mathcal{C}^\alpha$-norm is controlled by that of $f$. We shall introduce a similar result for models of a regularity structure. We shall work with the following standing assumption, which gives a special role to polynomials:

\begin{assumption}\label{ass_polynomial_reg_structure}
Let $\mathcal{T}=(A,T,G)$ be a regularity structure and fix a dimension $d$. We assume that for any integer $n\in\mathbb{N}$ $ \bar{T}_n = T_n $, where $\bar{T}=\bigoplus_{n\in \mathbb{N}} \bar{T}_n$ denotes the polynomial regularity structure of dimension $d$ and furthermore that $G$ acts on $\bar{T}$ in the canonical way.
\end{assumption}

\begin{remark} 
Let us point out that one can always add the polynomials to a regularity structure and thus this assumption is not restrictive. Since in applications we often work with non-polynomial regularity orders $ \alpha - \epsilon$, for arbitrary small $ \epsilon > 0 $, also the restriction that $ T_n $ only consists of polynomials does not matter. For our purpose, however, we could also just assume that $ \bar{T}_n \subset T_n $, but in order to stay in line with the literature and make notation lighter we chose the above slightly more restrictive assumption.
\end{remark}

\begin{definition}\label{def_appropriate_seminorms}
For $\epsilon>0$ we introduce on models $Z=(\Pi,\Gamma)\in \mathcal{M}_\mathcal{T}$ the following semi norms:
\begin{enumerate}
\item  Let $$\|\Pi\|_{\gamma,\epsilon}=\max\Bigg\{ \sup_{x\in \mathbb{R}^d} \sup_{\zeta\in (A\setminus \mathbb{N})\cap (-\infty,\gamma) } \|\Pi\|_{x,\zeta-\epsilon},\; \sup_{x\in \mathbb{R}^d} \sup_{\zeta\in \mathbb{N}\cap (-\infty,\gamma) } \|\Pi\|_{x,\zeta}  \Bigg\},$$ 
\item and similarly 
$$\|\Gamma\|_{\gamma,\epsilon}:=\max\Big\{ \sup_{ \|x-y\|_s<1} \sup_{\zeta\in (A\setminus \mathbb{N})\cap (-\infty,\gamma)}  \|\Gamma\|_{x,y,\zeta-\epsilon},\; \sup_{ \|x-y\|_s<1} \sup_{\zeta\in \mathbb{N}\cap (-\infty,\gamma)}  \|\Gamma\|_{x,y,\zeta}\Big\}. $$
\end{enumerate}
and write $\|Z\|_{\gamma,\epsilon}=\|\Pi\|_{\gamma,\epsilon}+\|\Gamma\|_{\gamma,\epsilon}$.
\end{definition}
\begin{remark}
Note that these semi-norms are defined in such a way that they agree on the polynomial part of the model, while on the rest they are slightly weaker.
\end{remark}
\begin{remark}
Let us us mention that in applications of regularity structures the models one usually works with are actually contained in the closure of smooth models in $\mathcal{M}_T$ by direct inspection. We shall prove that this is a generic fact (with respect to the above weaker topologies).
\end{remark}

\subsection{Mollifying a model}\label{subsec mollifying}

Let $\phi$ be a smooth compactly supported function whose integral equals $1$. We introduce the linear map $$J: \mathbb{R^d}\to L(T,T),$$ such that $J(x)$ annihilates polynomials for each $x\in \mathbb{R}$ and, for $\alpha\in (A\setminus \mathbb{N})$, maps $\tau\in T_\alpha$ to 
$$J(x)\tau=\sum_{|k|_{\mathfrak{s}}<\alpha} \frac{X^k}{k!} D^k (\phi * \Pi_x\tau) (x) \, .$$
Notice that the map $J$ depends on the choice of $\phi$. We say that the tuple of maps $\tilde Z=(\tilde \Pi,\tilde \Gamma)$ is obtained from $Z=(\Pi,\Gamma)$ by \emph{mollification with $\phi$} if it is of the following form
\begin{enumerate}
\item For $\alpha\in A\setminus \mathbb{N}$ and $\tau\in T_\alpha$ one has
$$\tilde{\Pi}_x \tau:=\phi* \Pi_x \tau-\Pi_x J(x) \tau \, ,$$ 
and on polynomials $\tilde{\Pi}$ agrees with $\Pi$.
\item The following identity holds
$$\tilde{\Gamma}_{x,y}:=\Gamma_{x,y}+J(x)\Gamma_{x,y}-\Gamma_{x,y} J(y) \, .$$
\end{enumerate}

The next lemma shows that mollified models are models and satisfy natural bounds.

\begin{lemma}\label{lem_mollification_bounds}
Let $Z=(\Pi,\Gamma)$ be a model satisfying Assumption \ref{ass_polynomial_reg_structure}, then the mollification $\tilde Z=(\tilde \Pi,\tilde \Gamma)$ is a model. Furthermore for $\gamma\in \mathbb{R} $ the bound 
$$\|Z\|_\gamma\lesssim \|\tilde{Z}\|_\gamma$$
holds uniformly over $\phi\in \big\{ \psi^\lambda\in \mathcal{C}_c^\infty \; | \, \psi \in \mathfrak{B}_r, \; \lambda\in (0,1)\big\}$ for $r>|\min A|\vee \gamma$. 
\end{lemma}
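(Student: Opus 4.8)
The strategy is to verify the two assertions separately: first that $\tilde Z$ satisfies the algebraic and analytic axioms of a model, and second the quantitative reverse bound $\|Z\|_\gamma \lesssim \|\tilde Z\|_\gamma$. For the algebraic part, the identities $\tilde\Pi_x \tilde\Gamma_{x,y} = \tilde\Pi_y$ and $\tilde\Gamma_{x,y}\tilde\Gamma_{y,z} = \tilde\Gamma_{x,z}$ should follow by a direct substitution of the definitions $\tilde\Pi_x\tau = \phi*\Pi_x\tau - \Pi_x J(x)\tau$ and $\tilde\Gamma_{x,y} = \Gamma_{x,y} + J(x)\Gamma_{x,y} - \Gamma_{x,y}J(y)$, using $\Pi_x\Gamma_{x,y} = \Pi_y$ repeatedly and the fact that $\phi*(\cdot)$ commutes with the fixed linear action in the appropriate sense. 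One checks that the cross terms telescope; the only subtlety is keeping track of how $J(x)$ lands in the polynomial sector, where $\tilde\Pi = \Pi$, so that $\Pi_x J(x)\tau$ is literally the Taylor polynomial $\sum_{|k|_\mathfrak{s}<\alpha} \frac{(\cdot - x)^k}{k!} D^k(\phi*\Pi_x\tau)(x)$.

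For the analytic bounds, i.e.\ $\|\tilde\Pi\|_{x,\zeta} < \infty$ and $\|\tilde\Gamma\|_{x,y,\zeta} < \infty$ (which shows $\tilde Z \in \mathcal{M}_\mathcal{T}$), the key observation is that $\phi*\Pi_x\tau$ tested against $\varphi_x^\delta$ gives $\langle \Pi_x\tau, (\phi*\check\varphi)_x^\delta\rangle$-type expressions; since $\phi = \psi^\lambda$ with $\lambda < 1$, the convolution kernel $\phi*\check\varphi^\delta$ is, up to rescaling, a legitimate test function at scale $\delta \vee \lambda \lesssim 1$, so the model bound on $\Pi$ controls it. The subtracted term $\Pi_x J(x)\tau$ is handled because $J(x)\tau$ lives in the polynomial sector with coefficients $D^k(\phi*\Pi_x\tau)(x)$, and these derivatives are again controlled by $\|\Pi\|_{x,\zeta}$ after differentiating under the convolution — each derivative costs a factor $\lambda^{-|k|_\mathfrak{s}}$ but gains the right power from the model bound. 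This is essentially the same Taylor-remainder estimate that appears in the proof that singular kernels act on modelled distributions, which the authors have signalled is the relevant analogy.

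The heart of the matter, and the step I expect to be the main obstacle, is the \emph{reverse} inequality $\|Z\|_\gamma \lesssim \|\tilde Z\|_\gamma$ uniform over admissible $\phi$. One inverts the relations: $\phi*\Pi_x\tau = \tilde\Pi_x\tau + \Pi_x J(x)\tau$, but the right-hand side still contains $\Pi$ through $J(x)$, so this is not an immediate algebraic inversion and one must argue by induction on the homogeneity $\zeta \in A$, ordering $A$ from below. At the lowest level there is nothing to mollify away (polynomials, or the minimal non-integer homogeneity where $J(x)\tau = 0$), and there $\Pi$ and $\tilde\Pi$ are comparable directly because $\langle\Pi_x\tau,\varphi_x^\delta\rangle$ can be recovered from $\langle\phi*\Pi_x\tau, \cdot\rangle = \langle\Pi_x\tau, \check\phi*\cdot\rangle$ by choosing $\phi$ close to a Dirac mass — but since $\phi$ is \emph{fixed} (only ranging over a rescaled family), one instead estimates $\Pi_x\tau$ tested against $\varphi_x^\delta$ for $\delta$ not too small in terms of $\tilde\Pi$, and for $\delta$ small uses a scaling/iteration argument. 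Propagating the bound up the homogeneity ladder, the inductive hypothesis controls $\Pi_x$ on all sectors below $\zeta$, hence controls $\Pi_x J(x)\tau$ for $\tau \in T_\zeta$ (as $J(x)\tau$ has homogeneity $<\zeta$), and then $\|\Pi\|_{x,\zeta}$ is bounded by $\|\tilde\Pi\|_{x,\zeta}$ plus lower-order terms. The analogous argument for $\Gamma$ uses $\Gamma_{x,y} = \tilde\Gamma_{x,y} - J(x)\Gamma_{x,y} + \Gamma_{x,y}J(y)$ together with the bound just obtained for $\Pi$. Making the constants genuinely uniform in $\phi = \psi^\lambda$, $\psi \in \mathfrak{B}_r$, $\lambda \in (0,1)$ — so that they depend only on $r$, $d$ and the structure — is the bookkeeping one has to be careful with, and the requirement $r > |\min A| \vee \gamma$ is exactly what guarantees the convolution kernels remain in the relevant test-function class across all the sectors that enter.
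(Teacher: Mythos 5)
The displayed inequality you fixated on is almost certainly a typo in the paper: the proof given there, and the use made of it in Theorem~\ref{thm_approximating_smooth_models} (which records $\|Z^\lambda\|_\gamma\lesssim\|Z\|_\gamma$, not the reverse), both concern $\|\tilde Z\|_\gamma\lesssim\|Z\|_\gamma$. The substantive content of the lemma is that the mollified pair is a model with norm controlled by the original, uniformly in the mollifier; that is what you must prove.

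Your treatment of the part the lemma is actually about is on track and essentially follows the paper's route. For the algebraic identities you correctly anticipate direct substitution using $\Pi_x\Gamma_{x,y}=\Pi_y$ and the vanishing of $J$ on polynomials. For the analytic bounds you have identified the two ingredients the paper uses: a scale-separation argument based on the pointwise estimate $\phi*(\Pi_x\tau)(y)\lesssim\sum_{\alpha<\zeta}\lambda^\alpha|x-y|_\mathfrak{s}^{\zeta-\alpha}$ obtained from reexpanding $\Pi_x\tau=\Pi_y\Gamma_{y,x}\tau$, and the Taylor remainder formula (Lemma~\ref{lem_taylor_formula}) to handle the subtracted term $\Pi_xJ(x)\tau$ in the positive non-integer sectors. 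What you would still need to fill in is the case split $\delta\lessgtr\lambda$ (and $|x-y|_\mathfrak{s}\lessgtr\lambda$ for $\tilde\Gamma$) and the bookkeeping that makes the constants depend only on $r$, $d$, the structure and $\gamma$; but there is no conceptual gap here.

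The place where you went off course is what you called the \emph{heart of the matter}: the reverse estimate $\|Z\|_\gamma\lesssim\|\tilde Z\|_\gamma$. Besides being directed at the typo rather than the claim actually used, the induction you propose does not close. You argue that $J(x)\tau$ has homogeneity below $\zeta$ and is therefore controlled by the inductive hypothesis on lower sectors. But $J(x)\tau$ is a \emph{polynomial} whose scalar coefficients are $\frac{1}{k!}D^k(\phi*\Pi_x\tau)(x)$ with $\tau\in T_\zeta$ itself, so they manifestly depend on $\Pi$ restricted to the \emph{top} sector $T_\zeta$, not on strictly lower ones. Indeed, the identity $\tilde\Pi_x\tau=\phi*\Pi_x\tau-\Pi_xJ(x)\tau$ is precisely a Taylor-remainder decomposition, which forces $D^k(\tilde\Pi_x\tau)(x)=0$ for $|k|_\mathfrak{s}<\zeta$; there is therefore no way to read off the coefficients of $J(x)\tau$ from $\tilde\Pi_x\tau$ and you cannot invert the relation sector by sector as claimed. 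Fortunately this estimate is not needed anywhere: the extension strategy of the paper only requires that mollified models be models with norms dominated by the original.
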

\begin{remark}
Let us point out that this definition of mollification is very canonical. Indeed it is the minimal "correction" one has to make to the the expression $\phi*\Pi$ in order to obtain a model. This is why similarities to the study of Greens-kernels in regularity structures are present, see Remark \ref{Remark Hairer} below.
\end{remark}

For the proof of this lemma the following version of Taylor's formula will be useful, see \cite[Proposition A1.]{Hairer2014}. We equip $\mathbb{N}^d$ with the partial order where $k\geq l$ if $k_i\geq l_i$ for all $i$ and write $k_<:=\{l\in \mathbb{N}^d | l\leq k , l\neq k\}$. Furthermore set $\mathfrak{m}(k) := \inf\{ i| l_i\neq 0 \}$.
\begin{lemma}\label{lem_taylor_formula}
Let $A\subset \mathbb{N}^d$ be such that $k\in A\Rightarrow k_<\subset A$ and define $\partial A=\{k\notin A | k-e_{\mathfrak{m}(k)} \in A\}.$ Then the identity $$f(x)=\sum_{k\in A} \frac{D^kf(0)}{k!}h^k+\sum_{k\in\partial A} \int_{\mathbb{R}^d} D^kf(h) \mu_k(x,dh) \, ,$$
where $\mu^k(x,dh)$ are signed measures on $\mathbb{R}^d$ supported on $\{ z\in \mathbb{R}^d| z_i\in [0,x_i]\}$ with total mass $\frac{x^k}{k!}$, holds for every smooth function $f$ on $\mathbb{R}^d$.
\end{lemma}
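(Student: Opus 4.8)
The plan is to prove the Taylor-with-integral-remainder formula by induction on the structure of the down-closed set $A \subset \mathbb{N}^d$, peeling off one multi-index at a time. Since the set of ``admissible'' $A$ (those with $k \in A \Rightarrow k_< \subset A$) is generated from the empty set by repeatedly adjoining a single index $m$ such that $A \cup \{m\}$ is still admissible — i.e.\ $m \notin A$ but $m_< \subset A$, so $m \in \partial A$ — it suffices to (i) establish the base case $A = \emptyset$, and (ii) show that if the formula holds for an admissible $A$ then it holds for $A' = A \cup \{m\}$ with $m \in \partial A$.

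\textbf{Base case.} For $A = \emptyset$ we have $\partial A = \{0\}$ (the index $0 = (0,\dots,0)$ satisfies $0 - e_{\mathfrak{m}(0)}$ vacuously, since $\mathfrak{m}(0) = \inf \emptyset$; more carefully one checks directly that $\partial\emptyset=\{0\}$), and the claimed identity reads $f(x) = \int_{\mathbb{R}^d} f(h)\,\mu_0(x,dh)$ where $\mu_0(x,\cdot)$ has total mass $1$ and is supported on the box $\prod_i [0,x_i]$. This is just the statement that $f(x)$ is an average of the values of $f$ over that box; one realizes $\mu_0$ via the one-dimensional fundamental-theorem-of-calculus iterated over coordinates (or simply takes $\mu_0 = \delta_x$ if one wants a cheap base case, but to make the induction close cleanly one wants the measure built from nested integrals). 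I would actually take the base case to be $A=\{0\}$, for which the formula is $f(x) = f(0) + \sum_{i=1}^d \int D^{e_i} f(h)\, \mu_{e_i}(x,dh)$; this is produced by writing $f(x)-f(0)$ as a telescoping sum along the $d$ coordinate directions and applying the one-dimensional identity $g(b)-g(a) = \int_a^b g'(t)\,dt$ to each term, so that $\mu_{e_i}(x,\cdot)$ is (a pushforward of) Lebesgue measure on a segment, with total mass $x^{e_i}/e_i! = x_i$.

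\textbf{Inductive step.} Suppose the formula holds for $A$, and let $m \in \partial A$, $A' = A \cup \{m\}$. In the $A$-formula the term $\int D^m f(h)\, \mu_m(x,dh)$ appears (since $m \in \partial A$); I want to convert it into the ``polynomial'' term $\frac{D^m f(0)}{m!} x^m$ plus new integral terms indexed by $\partial A' \setminus \partial A$. Apply to the function $g := D^m f$ the one-step expansion $g(h) = g(0) + \sum_{i} \big(g(\text{shift}_i h) - g(0)\big)$-type identity, i.e.\ the fundamental theorem of calculus along each coordinate, to write $g(h)$ as $g(0)$ plus a sum of integrals of $D^{e_i} g(h') = D^{m + e_i} f(h')$; integrating against $\mu_m(x,dh)$ and using Fubini gives $\int D^m f(h)\mu_m(x,dh) = \frac{D^m f(0)}{m!} x^m + \sum_i \int D^{m+e_i} f(h') \,\nu_i(x,dh')$, where the total mass and support of each $\nu_i$ are computed from those of $\mu_m$. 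One then checks that the indices $m + e_i$ that genuinely contribute are exactly those lying in $\partial A' \setminus \partial A$ (when $m + e_i \in A'$ it cannot occur — but $m + e_i \ne m$ and its predecessors need not all be in $A'$, so in general $m+e_i \notin A'$; when $m + e_i = m + e_j$ for the same new index from different old boundary indices, the corresponding measures add), and that the combined measure for each $k \in \partial A'$ has the prescribed total mass $x^k / k!$ and support in $\prod_i[0,x_i]$. Bookkeeping the masses is the place to be careful: the total mass of $\mu_m$ is $x^m/m!$, each coordinate integration along direction $i$ contributes an extra factor whose integral over the relevant segment is $x_i$, and dividing by the combinatorial factor $(m_i+1)$ coming from $\frac{d}{dt} t^{m_i+1} = (m_i+1)t^{m_i}$ produces exactly $x^{m+e_i}/(m+e_i)!$ in aggregate.

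\textbf{Main obstacle.} The genuine difficulty is purely combinatorial/bookkeeping: tracking which new multi-indices land in $\partial A'$, handling the fact that a single index $m+e_i$ may be reached from several directions or may already be forbidden, and verifying that the total masses recombine to the clean values $x^k/k!$ with the correct supports. The analytic content (one-dimensional FTC plus Fubini, applied to smooth $f$ so there are no convergence issues) is routine. I would therefore organize the proof so that the measures are defined explicitly by the nested-integral recursion and the mass identity $\int_{[0,x_i]} (\text{density})\,dt = x^{k_i}/k_i!$ is checked in one dimension, then invoke the induction to assemble the $d$-dimensional statement. Alternatively — and this may be cleaner to write — one proves the $d=1$ case fully by elementary repeated integration by parts (this is the classical integral form of Taylor's theorem, $f(x) = \sum_{k<n} \frac{f^{(k)}(0)}{k!}x^k + \int_0^x \frac{(x-t)^{n-1}}{(n-1)!} f^{(n)}(t)\,dt$, whose remainder measure has mass $x^n/n!$), and then obtains the general down-set $A$ by applying the one-dimensional result coordinate by coordinate and collecting terms; in that route the obstacle is again the combinatorial identification of $\partial A$ with the set of multi-indices of remainder terms surviving after the coordinatewise expansions.
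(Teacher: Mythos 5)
You should first note that the paper does not prove this lemma at all: it is quoted verbatim from the cited source ([Hai14, Proposition~A.1]), so your proposal has to be measured against that proof, whose mechanism (iterated one--dimensional Taylor/FTC plus Fubini, organized by induction over the down-closed set $A$) is indeed the one you are aiming at. However, your inductive step has a genuine gap. A minor point first: adjoining an arbitrary $m\in\partial A$ does not preserve admissibility ($m\in\partial A$ only guarantees $m-e_{\mathfrak{m}(m)}\in A$, not $m_<\subset A$; e.g.\ for $d=2$, $A=\{(0,0),(0,1)\}$ one has $(1,1)\in\partial A$ while $(1,0)\notin A$), so the step must be restricted to $m$ with $m_<\subset A$ — easily repaired. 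The serious problem is the identification of the new remainder indices. One computes $\partial(A\cup\{m\})=\bigl(\partial A\setminus\{m\}\bigr)\cup\{m+e_j:\ 1\le j\le \mathfrak{m}(m)\}$: only the directions up to the \emph{first} non-vanishing coordinate of $m$ are admitted, and this asymmetry is exactly what the definition of $\partial A$ through $\mathfrak{m}(k)$ encodes. Your expansion of $g=D^mf$ by coordinatewise FTC down to $g(0)$ produces remainder indices $m+e_i$ for \emph{all} $i=1,\dots,d$, and for $i>\mathfrak{m}(m)$ these indices in general lie neither in $A'$ nor in $\partial A'$ nor in $\partial A$, and the corresponding terms do not vanish or cancel. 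Concretely, take $A=\{(0,0)\}$, $m=(1,0)$: then $\partial A'=\{(2,0),(0,1)\}$, but your telescoping of $D^{(1,0)}f$ creates a $D^{(1,1)}f$-term whenever $h_2\neq 0$ on the support of $\mu_m$, and $(1,1)\notin\partial A'$. So the sentence ``one then checks that the indices $m+e_i$ that genuinely contribute are exactly those lying in $\partial A'\setminus\partial A$'' is precisely the step that fails under the hypotheses you carry.

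The unwanted terms are absent only if $\mu_m$ is supported on $\{h:\ h_i=0 \text{ for } i>\mathfrak{m}(m)\}$, so the induction must carry a strictly stronger hypothesis than ``signed measure on the box with total mass $x^m/m!$'': one needs this support property, and in fact enough information to compute first moments, because the mass of the new measure attached to $m+e_j$ is $\int h_j\,\mu_m(x,dh)$, which is \emph{not} determined by the total mass of $\mu_m$; your bookkeeping (``each coordinate integration contributes a factor $x_i$, divide by $(m_i+1)$'') silently assumes a specific product structure of $\mu_m$ that is nowhere part of your inductive hypothesis. The clean way out is to carry the measures explicitly, e.g.\ for $k\in\partial A$ with $j=\mathfrak{m}(k)$,
\begin{equation*}
\mu^k(x,dh)=\Bigl(\bigotimes_{i<j}\delta_{x_i}(dh_i)\Bigr)\otimes\frac{(x_j-h_j)^{k_j-1}}{(k_j-1)!}\,\mathbf{1}_{[0,x_j]}(h_j)\,dh_j\otimes\Bigl(\bigotimes_{i>j}\frac{x_i^{k_i}}{k_i!}\,\delta_{0}(dh_i)\Bigr),
\end{equation*}
which has total mass $x^k/k!$ (since $k_i=0$ for $i<j$), satisfies the support property, and makes both the index and the mass bookkeeping in the step immediate; equivalently, one performs the one-dimensional expansions in the fixed coordinate order dictated by $\mathfrak{m}$, which is how the proof cited by the paper is arranged. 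Your alternative route (full $d=1$ case, then coordinatewise assembly) is viable for the same reason, but there too you defer exactly this identification of the surviving indices with $\partial A$ to ``bookkeeping''; as written, the proposal therefore does not yet constitute a proof.
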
 
\begin{proof}[Proof of Lemma \ref{lem_mollification_bounds}]
First we check that the algebraic constraint $\tilde \Pi_x \tilde \Gamma_{x,y}=\tilde\Pi_y$ is satisfied: indeed for polynomials this follows directly form the definition. The only issue arises, when $\tau\in T_\alpha$ for $\alpha>0$ not an integer. Then we see that
\begin{align*}
\tilde \Pi_x \tilde \Gamma_{x,y}\tau &= \tilde \Pi_x \big(\Gamma_{x,y}\tau+J(x)\Gamma_{x,y}\tau-\Gamma_{x,y} J(y)\tau\big) \\
& = \underbrace{\tilde \Pi_x \Gamma_{x,y}\tau}_{\phi* \Pi_y\tau-\Pi_x J(x) \Gamma_{x,y}\tau }
+ \Pi_x  J(x)\Gamma_{x,y}\tau-\Pi_y J(y)\tau \\
&=\phi* \Pi_y\tau-\Pi_y J(y)\tau \\
&=\tilde\Pi_y \tau \, .
\end{align*}
In an analogous manner the second algebraic condition is checked: 
\begin{align*}
\tilde \Gamma_{x,y} \tilde \Gamma_{y,z} \tau & = \Gamma_{x,y} (\Gamma_{y,z} \tau + J(y)\Gamma_{y,z} \tau -\Gamma_{y,z} J(z)\tau) + \\
& + J(x) \Gamma_{x,y} (\Gamma_{y,z} \tau + J(y)\Gamma_{y,z} \tau -\Gamma_{y,z} J(z)\tau) - \\
& - \Gamma_{x,y} J(y) (\Gamma_{y,z} \tau + J(y)\Gamma_{y,z} \tau -\Gamma_{y,z} J(z)\tau) \\
& = \Gamma_{x,z} + J(x) \Gamma_{x,z} \tau - \Gamma_{x,z} J(z) \tau \, ,
\end{align*}
by applying the algebraic properties of the $ \Gamma $ maps, and using that $ J $ vanishes on polynomials.

Before we start showing the analytic bounds let us make the following simple observation: let $\phi=\psi^\lambda$, then we have for $\tau \in T_\zeta$
\begin{align}\phi*\big(\Pi_x\tau\big)(y)&=\big(\Pi_x\tau\big)\psi^\lambda_y \nonumber \\
&=\big(\Pi_y\Gamma_{y,x}\tau\big)\psi^\lambda_y\nonumber\\
&=\sum_{\alpha<\zeta}\Pi_y Q_\alpha(\Gamma_{y,x})(\psi^\lambda_y)\nonumber\\
&\lesssim \sum_{\alpha<\zeta} \lambda^{\alpha}|x-y|^{\zeta-\alpha}\label{pointwise estimate} \, .
\end{align}
Now we can establish the analytic bounds, starting with the bounds on $\tilde{\Pi}_x \tau$ for $\tau\in T_\zeta$. Note that there are three different cases:
\begin{enumerate}
\item The case $\zeta\in \mathbb{N}$, i.e $\tau$ is a monomial. Then $ \tilde \Pi \tau = \Pi \tau $ and there is nothing left to show
\item $\zeta< 0$ and we consider $\tilde{\Pi}_x \tau(\varphi^\delta)$. If $\delta\leq \lambda$, we apply Equation~\eqref{pointwise estimate} and obtain 
$$|\tilde{\Pi}_x \tau(\varphi_x^\delta)|\leq\int \big|\phi*\Pi_x\tau(y)| |\varphi^\delta_x(y)|\; dy \lesssim \sum_{\alpha<\zeta} \lambda^{\alpha}\delta^{\zeta-\alpha} \lesssim \delta^{\zeta} \, . $$
If $\delta>\lambda$ we note that $\psi^\lambda* \varphi_x^\delta=\psi^\lambda_x* \varphi^\delta$
and thus we obtain as above 
\begin{equation}\label{the more difficult case}
|\tilde{\Pi}_x \tau(\varphi_x^\delta)|\lesssim \sum_{\alpha<\zeta} \delta^{\alpha}\lambda^{\zeta-\alpha} \lesssim \delta^{\zeta} \, .
\end{equation}
\item The case $\zeta\in \mathbb{R}_+\setminus\mathbb{N}$: By Lemma~\ref{lem_taylor_formula}, with $A=\{k\in\mathbb{N}^d \ | \ |k|_\mathfrak{s}<\zeta\}$, we have 
\begin{align*}
\tilde{\Pi}_x \tau(y)&=\phi*\big(\Pi_x\tau\big)(y)-\sum_{k\in A} \frac{(y-x)^k}{k!} D^k\big( \phi*\Pi_x\tau\big)(x)\\
&=\sum_{k\in \partial A} \int D^k\big( \phi * \Pi_x\tau \big)(h) d\mu_k(y-x,h)\\
&\lesssim \sum_{k\in \partial A} \sum_{\alpha<\zeta} \lambda^{\alpha-|k|_\mathfrak{s}} |x-y|_\mathfrak{s}^{\zeta-\alpha}\int d\mu_k(y-x,h)\\
&\lesssim \sum_{k\in \partial A} \sum_{\alpha<\zeta} \lambda^{\alpha-|k|_\mathfrak{s}} |x-y|_\mathfrak{s}^{\zeta-\alpha+|k|_\mathfrak{s}} \, .
\end{align*}

Thus for $\lambda>\delta$:
$$\big|\tilde{\Pi}_x \tau(\varphi^\delta)\big|\lesssim \sum_{k\in \partial A} \sum_{\alpha<|\tau|} \lambda^{\alpha-|k|_\mathfrak{s}} \delta^{\zeta-\alpha+|k|_\mathfrak{s}}\lesssim \delta^{\zeta}\, , $$
where we used $|k|>\alpha$ in the last line.

In the case $\lambda<\delta$ we can argue as follows:
\begin{align*}
|\tilde{\Pi}_x \tau(\varphi^\delta_x)|&\leq|\phi*\big(\Pi_x\tau\big)(\varphi^\delta_x)|+\sum_{k\in A}\frac{1}{{k!}} \int | \varphi^\delta_x(x)(y-x)^k| \,dy |D^k\big( \phi*\Pi_x\tau\big)(x)|\\
&=|\varphi^\delta*\big(\Pi_x\tau\big)(\phi^\lambda_x)|+\sum_{k\in A}\frac{1}{{k!}} \int |\varphi^\delta_x(x)(y-x)^k| \,dy |D^k\big( \Pi_x\tau\big)(\phi_x)|\\
&\lesssim \sum_{\alpha<\zeta} \delta^{\alpha}\lambda^{\zeta-\alpha} +\delta^{|k|}\lambda^{\zeta-|k|}
\lesssim \delta^{\zeta} \, ,
\end{align*}
where we have used Equation~\eqref{the more difficult case}.

\end{enumerate}
We check the analytic bound on $\tilde\Gamma_{x,y}\tau=\Gamma_{x,y}+J(x)\Gamma_{x,y}-\Gamma_{x,y} J(y)$, where again the only non trivial case is $\tau\in T_\zeta$ for $\zeta>0$ not being an integer. It suffices to check the relevant bound for $J(x)\Gamma_{x,y}-\Gamma_{x,y} J(y)$. For this we notice that 
\begin{align*}
J(x)\Gamma_{x,y}\tau&=\sum_{|k|<\alpha} \frac{X^k}{k!} D^k (\phi * \Pi_x\Gamma_{x,y}\tau) (x)\\
&=\sum_{|k|<\alpha} \frac{X^k}{k!} D^k (\phi * \Pi_y\tau) (x),
\end{align*}
and
\begin{align*}
\Gamma_{x,y}J(y)\tau&=\Gamma_{x,y} \sum_{|k|<\zeta} \frac{X^k}{k!} D^k (\phi * \Pi_y\tau) (y)\\
&= \sum_{|k|<\zeta} \frac{\Gamma_{x,y}X^k}{k!} D^k (\phi * \Pi_y\tau) (y)\\
&=\sum_{|k|<\zeta} \sum_{l+m=k} {k\choose l } \frac{(x-y)^l X^m}{k!}D^k (\phi * \Pi_y\tau) (y)\\
&=\sum_{|k|<\zeta} \sum_{l+m=k}  \frac{(x-y)^l}{l!}\frac{X^m}{m!} D^k (\phi * \Pi_y\tau) (y)\\
&=\sum_{l+m<\zeta}  \frac{(x-y)^l}{l!}\frac{X^m}{m!} D^{m+l} (\phi * \Pi_y\tau) (y) \, .\\
\end{align*}
\begin{enumerate}

\item First we look at the case $\lambda>|x-y|_\mathfrak{s}$, we have 
\begin{align*}
\big(J(x)\Gamma_{x,y}-\Gamma_{x,y} J(y)\big)\tau=&
\sum_{m:\, |m|_\mathfrak{s}<\zeta} \frac{X^m}{m!}\Big( (D^m \phi * \Pi_y\tau) (x)-\sum_{|l|_\mathfrak{s}<\zeta-|m|_\mathfrak{s}} \frac{(x-y)^l}{l!}D^{l} (D^m\phi * \Pi_y\tau) (y)\Big) \, .
\end{align*}
Now we can bound each summand separately using Taylor's formula with $A=A_m=\{l\ | \|l|_\mathfrak{s}<\zeta-|m|_\mathfrak{s} \}$ and obtain 
\begin{align}
|J(x)\Gamma_{x,y}-\Gamma_{x,y} J(y)|_m & \lesssim \sum_{k\in \delta A_m} \int D^k\big( D^m\phi * \Pi_x\tau \big)(h) d\mu_k(h)\nonumber\\
&\lesssim \sum_{k\in \delta A_m} \frac{1}{\lambda^{|m|_\mathfrak{s}+|k|_\mathfrak{s}}} \sum_{\alpha<\zeta} \lambda^{\alpha}|x-y|_\mathfrak{s}^{\zeta-\alpha} \int d\mu_k(h)\nonumber\\
&\lesssim \sum_{k\in \delta A_m} \sum_{\alpha<\zeta} \lambda^{\alpha-(|m|_\mathfrak{s}+|k|_\mathfrak{s})}|x-y|_\mathfrak{s}^{\zeta-\alpha+|k|_\mathfrak{s}}\nonumber\\
&\lesssim |x-y|_\mathfrak{s}^{\alpha-(|m|_\mathfrak{s}+|k|_\mathfrak{s})+\zeta-\alpha+|k|_\mathfrak{s}}=|x-y|_\mathfrak{s}^{\zeta-|m|_\mathfrak{s}}\label{bound on Gamma}
\end{align}
\item The case $\lambda<|x-y|_\mathfrak{s}$ is easier, since we can bound the terms $J(x)\Gamma_{x,y}\tau$ and $\Gamma_{x,y}J(y)\tau$ separately:
First we write 
\begin{align*}
J(x)\Gamma_{x,y}=& \sum_{\alpha<\zeta} J(x)Q_{\alpha}\Gamma_{x,y} \\
=& \sum_{\alpha<\zeta} \sum_{|k|_\mathfrak{s}<\alpha} \frac{X^k}{k!} \big(\Pi_xQ_\alpha \Gamma_{x,y} \tau\big)\big( D^k\psi^\lambda_x\big)
\end{align*} 
and thus we obtain 
\begin{align}
|J(x)\Gamma_{x,y}\tau|_m &\lesssim \sum_{\alpha: \ |m|_\mathfrak{s}<\alpha<\zeta} \big|\big(\Pi_xQ_\alpha\Gamma_{x,y} \tau\big)\big(D^k\psi^\lambda_x\big)\big| \nonumber\\
&\lesssim \sum_{\alpha: \ |m|_\mathfrak{s}<\alpha<\zeta} |x-y|_\mathfrak{s}^{\zeta-\alpha} \lambda^{\alpha-|m|_\mathfrak{s}}\label{second gamma bound}\\
&\lesssim |x-y|_\mathfrak{s}^{\zeta-|m|_\mathfrak{s}}\nonumber \, .
\end{align}

Similarly we have
$$
\Gamma_{x,y}J(y)\tau=\sum_{|l|_\mathfrak{s}+|m|_\mathfrak{s}<\zeta}  \frac{X^m}{m!} \ \frac{(x-y)^l}{l!}  \big(\Pi_y\tau\big) D^{m+l} \psi^\lambda_x
$$
from which we conclude 
\begin{align}
|\Gamma_{x,y}J(y)\tau|_m&\lesssim \sum_{|l|_\mathfrak{s}<\zeta-|m|_\mathfrak{s}} |\frac{(x-y)^l}{l!}  \big(\Pi_y\tau\big)\big( D^{m+l} \psi^\lambda_x\big)|\nonumber \\
&\lesssim \sum_{|l|_\mathfrak{s}<\zeta-|m|_\mathfrak{s}} |x-y|^{|l|_\mathfrak{s}} \lambda^{\zeta-|m|_\mathfrak{s}-|l|_\mathfrak{s}}\label{third gamma bound}\\
&\lesssim|x-y|_\mathfrak{s}^{\zeta-|m|_\mathfrak{s}}\nonumber
\end{align}
\end{enumerate}
\end{proof}
\begin{remark} Note that the above proof yields for $|x-y|_\mathfrak{s}<\delta$ the pointwise estimate 
\begin{equation}\label{pointwise good estimate}
|\tilde{\Pi}_x \tau(y)|\lesssim |x-y|_\mathfrak{s}^\tau \, .
\end{equation}
\end{remark}

If we fix $\phi\in C_c^\infty$ such that $\int \phi=1$ and study the curve of models $Z^{\lambda}=(\Pi^{\lambda},\Gamma^{\lambda})$ which are obtained by mollifying $Z=(\Pi,\Gamma)$ by $\phi^\lambda$ we obtain the following result:
\begin{lemma}\label{lem_convergence_mollified_models}
For $\epsilon<\text{dist}\big((A\setminus \mathbb{N})\cap (-\infty,\gamma),\, \mathbb{N}\cap (-\infty,\gamma)\big)$ we have $\|Z-Z^\lambda\|_{\gamma,\epsilon}\lesssim \lambda^\epsilon\|Z\|_\gamma$, where the constant only depends on the regularity structure, $\gamma$ and $\phi$.
\end{lemma}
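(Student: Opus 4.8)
The plan is to bound the two contributions $\|\Pi-\Pi^\lambda\|_{\gamma,\epsilon}$ and $\|\Gamma-\Gamma^\lambda\|_{\gamma,\epsilon}$ separately. On the polynomial part of $T$ the two models coincide by construction (the map $J$ annihilates polynomials and $G$ acts canonically there), so for each contribution only the homogeneities $\zeta\in(A\setminus\mathbb{N})\cap(-\infty,\gamma)$ matter; I fix such a $\zeta$ and $\tau\in T_\zeta$. I would record at the outset that the hypothesis $\epsilon<\text{dist}\big((A\setminus\mathbb{N})\cap(-\infty,\gamma),\,\mathbb{N}\cap(-\infty,\gamma)\big)$ is used only through two elementary consequences: (i) the open interval $(\zeta-\epsilon,\zeta)$ contains no non-negative integer, so lowering the homogeneity to $\zeta-\epsilon$ changes neither the sign of $\zeta$ nor the set of Taylor orders $\{\,|k|_\mathfrak{s}<\zeta\,\}$ entering the definition of $J$; and (ii) $\epsilon\le\zeta-|k|_\mathfrak{s}$ whenever $|k|_\mathfrak{s}<\zeta$, and $\epsilon\le\alpha-|m|_\mathfrak{s}$ for the homogeneities $|m|_\mathfrak{s}<\alpha<\zeta$ occurring below (and $\epsilon<1$ in the cases that arise). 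Everything then reduces to re-running the estimates in the proof of Lemma~\ref{lem_mollification_bounds} while keeping track of the powers of $\lambda$, plus one short extra argument at scales larger than $\lambda$.

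For the $\Pi$-part I would split according to the test scale $\delta$. When $\delta\le\lambda$ nothing new is needed: the bounds obtained in the proof of Lemma~\ref{lem_mollification_bounds}, together with the model bound on $\Pi_x\tau$ itself, give $|\langle(\Pi_x-\Pi^\lambda_x)\tau,\varphi^\delta_x\rangle|\lesssim\delta^\zeta\|Z\|_\gamma|\tau|$, and since $\delta\le\lambda$ and $\epsilon>0$ this is $\le\lambda^\epsilon\delta^{\zeta-\epsilon}\|Z\|_\gamma|\tau|$, as wanted. When $\delta>\lambda$ a genuine gain must be exhibited, and the key is to use the explicit form $(\Pi_x-\Pi^\lambda_x)\tau=(\id-\phi*)\Pi_x\tau+\Pi_x J(x)\tau$ with $\phi=\psi^\lambda$. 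Moving the convolution onto the test function, the first summand tested against $\varphi^\delta_x$ equals $\langle\Pi_x\tau,\varphi^\delta_x-\bar\phi*\varphi^\delta_x\rangle$ with $\bar\phi(\cdot)=\phi(-\cdot)$; because $\int\phi=1$, a Taylor expansion of $\varphi^\delta_x$ (using $\|D^m\varphi^\delta_x\|_\infty\lesssim\delta^{-|\mathfrak{s}|-|m|_\mathfrak{s}}$ and that $\bar\phi$ is supported at $\mathfrak{s}$-scale $\lambda\le\delta$) shows that $\tfrac{\delta}{\lambda}(\varphi^\delta_x-\bar\phi*\varphi^\delta_x)$ is, up to a fixed constant and a fixed dilation of its support, a scale-$\delta$ test function centred at $x$; hence the first summand is $\lesssim(\lambda/\delta)\delta^\zeta\|Z\|_\gamma|\tau|=\lambda\delta^{\zeta-1}\|Z\|_\gamma|\tau|$. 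The second summand is the Taylor polynomial of $\phi*\Pi_x\tau$ at $x$ up to order $\zeta$; pairing it with $\varphi^\delta_x$ and using $\int(y-x)^k\varphi^\delta_x(y)\,dy\lesssim\delta^{|k|_\mathfrak{s}}$ together with $|D^k(\phi*\Pi_x\tau)(x)|\lesssim\lambda^{\zeta-|k|_\mathfrak{s}}\|Z\|_\gamma|\tau|$ (the last bound being the model bound applied to the rescaled bump $D^k\phi$) produces a finite sum of terms $\lambda^{\zeta-|k|_\mathfrak{s}}\delta^{|k|_\mathfrak{s}}\|Z\|_\gamma|\tau|$ with $|k|_\mathfrak{s}<\zeta$. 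Each of these, and also $\lambda\delta^{\zeta-1}$, equals $\lambda^\epsilon\delta^{\zeta-\epsilon}$ times a factor $(\lambda/\delta)^a$ with exponent $a\in\{1-\epsilon\}\cup\{\zeta-|k|_\mathfrak{s}-\epsilon\}$; by (ii) every such $a$ is $\ge0$, and since $\lambda<\delta$ the whole thing is $\lesssim\lambda^\epsilon\delta^{\zeta-\epsilon}\|Z\|_\gamma|\tau|$. (For $\zeta<0$ the polynomial term is absent.)

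For the $\Gamma$-part I would use $(\Gamma_{x,y}-\Gamma^\lambda_{x,y})\tau=-\big(J(x)\Gamma_{x,y}-\Gamma_{x,y}J(y)\big)\tau$ and simply re-read the proof of Lemma~\ref{lem_mollification_bounds}, this time not collapsing $\lambda$ against $|x-y|_\mathfrak{s}$. In the regime $\lambda>|x-y|_\mathfrak{s}$ the computation behind \eqref{bound on Gamma} bounds $|J(x)\Gamma_{x,y}-\Gamma_{x,y}J(y)|_m$ by a sum of terms $\lambda^{\alpha-|m|_\mathfrak{s}-|k|_\mathfrak{s}}|x-y|_\mathfrak{s}^{\zeta-\alpha+|k|_\mathfrak{s}}$ over $k\in\partial A_m$, where the $\lambda$-exponent is negative because $|k|_\mathfrak{s}\ge\zeta-|m|_\mathfrak{s}$; in the regime $\lambda<|x-y|_\mathfrak{s}$ one uses \eqref{second gamma bound} and \eqref{third gamma bound}, which are sums of terms $|x-y|_\mathfrak{s}^{\zeta-\alpha}\lambda^{\alpha-|m|_\mathfrak{s}}$ and $|x-y|_\mathfrak{s}^{|l|_\mathfrak{s}}\lambda^{\zeta-|m|_\mathfrak{s}-|l|_\mathfrak{s}}$ with positive $\lambda$-exponents. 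In every case the term is of the shape $\lambda^{a}|x-y|_\mathfrak{s}^{\,b}\|Z\|_\gamma|\tau|$ with $a+b=\zeta-|m|_\mathfrak{s}$, so dividing by $\lambda^{\epsilon}|x-y|_\mathfrak{s}^{\,\zeta-\epsilon-|m|_\mathfrak{s}}$ leaves $(\lambda/|x-y|_\mathfrak{s})^{a-\epsilon}$: this is $\le1$ in the first regime because $a<0$ and $\lambda/|x-y|_\mathfrak{s}>1$, and in the second regime because $a>0$ is one of $\alpha-|m|_\mathfrak{s}$ or $\zeta-|m|_\mathfrak{s}-|l|_\mathfrak{s}$, which by (ii) is $\ge\epsilon$, while $\lambda/|x-y|_\mathfrak{s}<1$. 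Summing the finitely many $k,\alpha,l$ and taking the supremum over $m$, then recombining with the $\Pi$-estimate, gives $\|Z-Z^\lambda\|_{\gamma,\epsilon}\lesssim\lambda^\epsilon\|Z\|_\gamma$ with a constant depending only on $\mathcal{T}$, $\gamma$ and $\phi$.

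I expect the only genuinely delicate step to be the large-scale $\Pi$-estimate — precisely the claim that $\varphi^\delta_x-\bar\phi*\varphi^\delta_x$ is $(\lambda/\delta)$ times a bounded multiple of a rescaled scale-$\delta$ test function in the $\mathfrak{s}$-anisotropic scaling, since this is where the factor $\lambda/\delta$ (and hence the rate $\lambda^\epsilon$) is produced and where $\int\phi=1$ is used; it also explains why no better rate can be expected for a generic $\phi$. All remaining steps — the small-scale $\Pi$-estimate, the entire $\Gamma$-estimate, and the verification that the $\epsilon$-hypothesis keeps the Taylor orders compatible with the lowered homogeneity $\zeta-\epsilon$ — should amount to a routine bookkeeping rerun of the proof of Lemma~\ref{lem_mollification_bounds}.
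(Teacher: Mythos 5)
Your proof is correct, and it differs from the paper's in one place. For the $\Pi$-estimate at scales $\delta>\lambda$ the paper (for $\zeta<0$) decomposes the difference as a dyadic telescope
\[
\Pi_x\tau - \Pi_x^{\lambda}\tau = \sum_{n\geq 0}\bigl(\Pi_x^{\lambda/2^{n+1}}-\Pi_x^{\lambda/2^{n}}\bigr)\tau ,
\]
moves the mollifier onto $\varphi^\delta$, and bounds each dyadic term with the pointwise estimate \eqref{pointwise estimate}, summing the geometric series over $n$; for $\zeta>0$ the same telescope is used for the $\phi^\lambda*\Pi_x\tau$ term, with $\Pi_x J(x)\tau$ handled as you do. You instead avoid the telescope entirely: you test a single difference $\varphi^\delta_x-\bar\phi*\varphi^\delta_x$ against $\Pi_x\tau$, observe by a first-order Taylor expansion (valid because $\int\phi=1$ and $\phi$ has $\mathfrak{s}$-scale $\lambda\le\delta$) that this is $O(\lambda/\delta)$ times a scale-$\delta$ bump centred at $x$, and then invoke the model bound at scale $\delta$. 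Your $\Gamma$-estimate and your handling of the hypothesis on $\epsilon$ (used only to keep the homogeneity $\zeta-\epsilon$ on the same side of every integer that $\zeta$ is, so that the Taylor order set $\{\,|k|_\mathfrak{s}<\zeta\,\}$ is unchanged and all exponents $\zeta-|k|_\mathfrak{s}$ stay $\ge\epsilon$) coincide with the paper's. Both routes buy the same rate $\lambda^\epsilon$; yours is slightly more direct, pushing the gain onto the test-function side once and for all instead of through a geometric series, at the price of the (correct but slightly delicate in the anisotropic setting) claim about $\varphi^\delta_x-\bar\phi*\varphi^\delta_x$, where one should note that the Taylor gain is really $(\lambda/\delta)^{\min_i\mathfrak{s}_i}\le\lambda/\delta$, which is nevertheless all you need since you only ask for $\epsilon<1$.
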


Since the proof is similar to the one of Lemma \ref{lem_mollification_bounds}, we have moved it to the appendix.

\begin{remark}\label{Remark Hairer} The definition of $J$ above is essentially identical to the one appearing in \cite{Hairer2014} when constructing a lift for singular kernels. Indeed, one could interpret convolving with elements of $\phi^\lambda$ as convolving with a $0$-regularizing kernel and showing appropriate continuity in the kernel. From this point of view there is a slight similarity to Theorem 4.2 in~\cite{HaiGer:2017}, where the authors construct simultaneously a lift for many different kernels and show appropriate continuity with respect to the kernels.
\end{remark}

By means of the previous lemmas we have finally proved the following approximation theorem:

\begin{theorem}\label{thm_approximating_smooth_models}
Let $(A,T,G)$ be a regularity structure satisfying Assumption \ref{ass_polynomial_reg_structure}. Let $Z\in \mathcal{M}_T$ be a model, which coincides on $\bar{T}\subset T$ with the canonical model. Then there exists a family of smooth models $\big(Z^\lambda\big)_{\lambda\in(0,1)}\in \mathcal{M}_T$ whose restriction to $\bar{T}$ coincides with the polynomial model, such that for any $\epsilon>0$ small enough,

$$\|Z-\mathbb{Z}^\lambda\|_{\gamma,\epsilon}\lesssim \lambda^\epsilon\|Z\|_\gamma$$ and
$$\|Z^\lambda\|_\gamma\lesssim \|Z\|_\gamma,$$
where the constants depends only on the regularity structure.
\end{theorem}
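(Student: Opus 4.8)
The plan is to read the theorem off as a direct consequence of Lemmas~\ref{lem_mollification_bounds} and~\ref{lem_convergence_mollified_models}, the only genuinely new point being a short verification that the mollified models produced there are in fact smooth. First I would fix once and for all a profile $\phi\in C_c^\infty(\mathbb{R}^d)$ with $\int\phi=1$; given $\gamma$, after rescaling and dividing by a fixed constant $c$ one may write $\phi=c\,\psi^{\lambda_0}$ with $\psi\in\mathfrak{B}_r$ for some $r>|\min A|\vee\gamma$, so that $\phi^\lambda=c\,\psi^{\lambda\lambda_0}$ is, up to the harmless factor $c$, of the form to which Lemma~\ref{lem_mollification_bounds} applies, uniformly in $\lambda\in(0,1)$. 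Let $Z^\lambda=(\Pi^\lambda,\Gamma^\lambda)$ be the model obtained from $Z$ by mollification with $\phi^\lambda$ in the sense of Subsection~\ref{subsec mollifying}.

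Next I would dispatch the three structural assertions about $Z^\lambda$. That it is a model is exactly Lemma~\ref{lem_mollification_bounds}. That it restricts to the polynomial model on $\bar{T}$ uses the hypothesis on $Z$ together with the shape of the mollification: $\tilde\Pi$ equals $\Pi$ on $\bar{T}$ by construction, and for $\tau\in\bar{T}$ one has $\Gamma_{x,y}\tau\in\bar{T}$ (as $G$ acts canonically on $\bar{T}$ by Assumption~\ref{ass_polynomial_reg_structure}), whence $J(x)\Gamma_{x,y}\tau=0$ and $J(y)\tau=0$ because $J$ annihilates polynomials; thus $\tilde\Gamma_{x,y}\tau=\Gamma_{x,y}\tau$, and since $Z$ coincides with the canonical model on $\bar{T}$ so does $Z^\lambda$. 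The one assertion needing a direct, if short, argument is smoothness: for $\tau\in\bar{T}$, $\Pi^\lambda_x\tau=\Pi_x\tau$ is a polynomial; for $\tau\in T_\alpha$ with $\alpha\in A\setminus\mathbb{N}$ one has $\Pi^\lambda_x\tau=\phi^\lambda*\Pi_x\tau-\Pi_xJ(x)\tau$, where $\phi^\lambda*\Pi_x\tau$ is smooth as the convolution of the distribution $\Pi_x\tau$ with a test function, and $\Pi_xJ(x)\tau=\sum_{|k|_\mathfrak{s}<\alpha}\tfrac{1}{k!}D^k(\phi^\lambda*\Pi_x\tau)(x)\,\Pi_xX^k$ is a polynomial in its argument (here the hypothesis $\Pi|_{\bar{T}}=$ canonical is used, so that $\Pi_xX^k(y)=(y-x)^k$). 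Linearity then gives smoothness of $\Pi^\lambda_x$ on all of $T$.

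It remains to record the two quantitative bounds. The convergence estimate $\|Z-Z^\lambda\|_{\gamma,\epsilon}\lesssim\lambda^\epsilon\|Z\|_\gamma$ for every $\epsilon<\mathrm{dist}\big((A\setminus\mathbb{N})\cap(-\infty,\gamma),\,\mathbb{N}\cap(-\infty,\gamma)\big)$ --- which is what ``$\epsilon$ small enough'' means --- is Lemma~\ref{lem_convergence_mollified_models} applied verbatim to the curve $\lambda\mapsto Z^\lambda$. For the uniform bound $\|Z^\lambda\|_\gamma\lesssim\|Z\|_\gamma$ I would use not the stated inequality of Lemma~\ref{lem_mollification_bounds} but the estimates inside its proof: every analytic bound established there on $\tilde\Pi_x\tau$ (the second and third cases) and on $\tilde\Gamma_{x,y}\tau$ (displays~\eqref{bound on Gamma}, \eqref{second gamma bound}, \eqref{third gamma bound}) is derived, through~\eqref{pointwise estimate} and Taylor's formula, from the analytic bounds on $\Pi$ and $\Gamma$, with implicit constants depending only on the regularity structure, on $\gamma$, and on the fixed profile $\psi$; specialising to $\phi^\lambda$ yields $\|Z^\lambda\|_\gamma\lesssim\|Z\|_\gamma$ uniformly in $\lambda\in(0,1)$.

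I do not anticipate a genuine obstacle here: the statement is essentially a repackaging of the two lemmas, and all the real analysis has already been carried out in them (above all in the proof of Lemma~\ref{lem_convergence_mollified_models}, deferred to the appendix). The only points demanding any care are the bookkeeping that places the fixed family $(\phi^\lambda)_{\lambda\in(0,1)}$ under the hypotheses of those lemmas, and the observation that the forward norm bound $\|Z^\lambda\|_\gamma\lesssim\|Z\|_\gamma$ must be extracted from the proof of Lemma~\ref{lem_mollification_bounds} rather than from its displayed conclusion.
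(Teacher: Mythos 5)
Your proof is correct and mirrors the paper's approach exactly: the paper presents Theorem~\ref{thm_approximating_smooth_models} as an immediate consequence of Lemma~\ref{lem_mollification_bounds} and Lemma~\ref{lem_convergence_mollified_models}, and you have simply spelled out the bookkeeping (smoothness of $Z^\lambda$, compatibility with the polynomial structure on $\bar{T}$, placing $\phi^\lambda$ under the hypotheses of Lemma~\ref{lem_mollification_bounds}) that the paper leaves implicit. Your observation that the uniform bound $\|Z^\lambda\|_\gamma\lesssim\|Z\|_\gamma$ must be read off from the estimates inside the proof of Lemma~\ref{lem_mollification_bounds} rather than from the inequality displayed in its statement (which reads $\|Z\|_\gamma\lesssim\|\tilde{Z}\|_\gamma$ and appears to be a misprint) is a correct and careful reading of the source.
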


\subsection{Mollifying modelled distributions}\label{subsec_mollification_modelled_distribution}
Models form the groundwork for expansions in regularity structures. Expansions can be encoded by collections of expansion coefficients $(f(x))_{x \in \mathbb{R}^d}\in T$ with respect to a model. It is important to single out those expansion which actually correspond to a global object, which leads to the (Besov-type) definition of modelled distributions, cf. \cite[Definition 2.10]{Hairer2017}.
\begin{definition}\label{def modelled dist}
Given a regularity structure  $\mathcal{T}=(A,T,G)$ and a model for it $Z=(\Pi, \Gamma)$. For $p,q\in [1,\infty]$, we define $
\mathcal{D}^\gamma_{p,q}$ as the space of all measurable maps $f:\mathbb{R}^d\to T_{<\gamma}$, such that for all $\zeta \in A\cap (-\infty,\gamma)$ the following bounds hold:
\begin{itemize}
\item $\big\| |f(x)|_\zeta\big\|_{L^p} <\infty$
\item $\int_{h\in B^\mathfrak{s}_1} \Big\| \frac{|f(x+h)-\Gamma_{x+h,x}f(x)|_\zeta}{\|h\|^{\gamma-\zeta}_\mathfrak{s}}\Big\|^q_{L^p} \frac{dh}{\|h\|_\mathfrak{s}^{|s|_\mathfrak{s}}} =: \|f\|^q_{d^{\gamma,\zeta}_{p,q}}<+\infty \, .$
\end{itemize}
We define the norm $$\|f\|_{\mathcal{D}^\gamma_{p,q}}:=\sum_{\zeta \in A\cap (-\infty,\gamma)} \big\| |f(x)|_\zeta\big\|_{L^p}+\|f\|_{d^{\gamma,\zeta}_{p,q}}$$
on $\mathcal{D}^\gamma_{p,q}$.
It is possible to measure the distance between modelled distributions coming from different models. We introduce
\begin{align*}
\| f,\bar{f}\|_{\mathcal{D}^\gamma_{p,q}}&:=\sum_{\zeta \in A\cap (-\infty,\gamma)} \big\| |f(x)-\bar{f}(x)|_\zeta\big\|_{L^p}\\
&+\Bigg(\int_{h\in B^\mathfrak{s}_1} \Big\| \frac{|f(x+h)-\Gamma_{x+h,x}f(x)-( \bar{f}(x+h)-\bar{\Gamma}_{x+h,x}\bar{f}(x))|_\zeta}{\|h\|^{\gamma-\zeta}_\mathfrak{s}}\Big\|^q_{L^p} \frac{dh}{\|h\|_\mathfrak{s}^{|\mathfrak{s}|}}\Bigg)^{\frac{1}{q}}
\end{align*}
for two modelled distributions $f,\bar{f}$ and models $Z=(\Pi,\Gamma)$, $\bar{Z}=(\bar{\Pi},\bar{\Gamma})\in \mathcal{M}_\mathcal{T}$.
\end{definition}

Even though it would be desirable to find a direct mollification procedure on modelled distributions, this is unfortunately not possible \emph{without} applying the reconstruction operator. This is outlined in the next remark. However, due to the density statement of the next subsection, we do not need a mollification of modelled distributions.
\begin{remark}\label{Remakr mollifying modelled}
Suppose we are given a regularity structure $(A,T,G)$ satisfying Assumption \ref{ass_polynomial_reg_structure}, together with a model $Z=(\Pi,\Gamma)$. For a fixed $\phi$ we denote by $\tilde{Z}=(\tilde{\Pi},\tilde{\Gamma})$ the associated mollified model and let $J$ as above. We denote by $\tilde{\mathcal{D}}_{p,q}^\gamma$ the corresponding space of modelled distributions. Then there is a canonical mollification of modelled distributions, however, one \emph{needs} the reconstruction operator for it, see Theorem~\ref{thm_reconstruction} for its definition. One would hope that it was given by $\tilde{f}(\cdot):=f(\cdot)+J(\cdot)f(\cdot)$, since it has the natural property that it satisfies $$\tilde{\Pi}_x \tilde{f}(x)=\phi*\Pi_x f(x) \, .$$ Unfortunately, it turns out that this is in general not a modelled distribution for the model $\tilde{Z}$. The correct way of mollifying modelled distributions is instead given by the map $$\mathcal{D}^\gamma \to \tilde{\mathcal{D}}^\gamma,\ \, f(\cdot)\mapsto \tilde{f}(\cdot):=f(\cdot)+J(\cdot)f(\cdot)+(\mathcal{N}f)(\cdot),$$
where $(\mathcal{N}f)(x):=\sum_{|k|_\mathfrak{s}<\gamma} \frac{X^k}{k!} (\mathcal{R}f-\Pi_xf)(D^k\phi_x)$ and $\mathcal{R}$ denotes the so called reconstruction operator associated to the model $Z$. 

This definition is rather rigid, since any Ansatz of the form $$\tilde{f}(\cdot)=f(\cdot)+J(\cdot)f(\cdot)+P_{f,x}(\cdot),$$ where $P_{f,x}$ is some polynomial depending on f, leads to this choice. 

Let us record the following facts about the map $f\mapsto \tilde{f}$: 
\begin{enumerate}
\item It is continuous and indeed maps $\mathcal{D}_{p,q}^\gamma \to \tilde{\mathcal{D}}_{p,q}^\gamma$.
\item It is appropriate to call $\tilde{f}$ the $\phi$-mollification of $f$, since one has the identity $$\tilde{\mathcal{R}}\tilde{f}=\phi*\mathcal{R}f.$$
\item As we let the mollifier converge to the Dirac measure at $0$, we obtain convergence of the corresponding mollified modelled distributions in the $\mathcal{D}^{\gamma-\epsilon}_{p,q}$ topology.
\end{enumerate}
The proofs of these facts are straightforward modifications of the arguments in \cite[Section 5.2]{Hairer2014} for the case $p,q=\infty$ and \cite[Section 5]{Hairer2017} in the general case.

We conclude the remark by pointing out that the \emph{canonical} mollification of modelled distributions relies on the existence of the reconstruction operator. 
\end{remark}

\section{A globally defined dense subset of modelled distributions}\label{subsec_dense_subset}

Given a regularity structure  $\mathcal{T}=(A,T,G)$ and a model for it $Z=(\Pi, \Gamma)$ we can distinguish an important subspace of $\mathcal{D}^\gamma_{p,q} $ of elementary modelled distributions under the following natural assumption.
\begin{assumption} Throughout this section we assume that $\bar{T}\subset T$ is given by the polynomials and that we have a product $\star:\bar{T}\times T\to T$ and that $(\Pi, \Gamma)$ satisfy the following natural conditions hold for all $X^k\in \bar{T}$, $\tau\in T$:
\begin{enumerate}
\item For all $\Gamma\in G$: \ $\Gamma(X^k\star\tau)=\Gamma X^k\star \Gamma\tau$.
\item For all $x\in\mathbb{R}^d$: $\Pi_x (X^k\star\tau)= (\Pi_x X^k)\cdot (\Pi_x \tau)$, 
where $\cdot$ denotes the pointwise product.
\end{enumerate}
\end{assumption}
\begin{remark} Note that this is not a restriction, since any regularity structure $\mathcal{T}'$ and model $Z'$ can be extended to satisfy the assumptions above in such a way that the extension of $\mathcal{T}'$ is independent of the model $Z'$. Or put in algebraic terms: any regularity structure $ \mathcal{T}'$ and any model $ Z'$ allows for a tensor product with the polynomial regularity structure (of the correct dimension), which then satisfies canonically the above properties.
\end{remark} 

We define $ \mathcal{E}^\gamma_{p,q} \subset \mathcal{D}^\gamma_{p,q} $ of elementary modelled distributions. 
It contains all locally finite linear combinations of localized 'constants' \footnote{Notice the Remark~\ref{explicit_reconstruction} after Theorem~\ref{thm_reconstruction}.} e.g.
\[
x \mapsto f_{z,\phi,\tau} (x):=\Phi(x) \star\Gamma_{x,z} \tau
\]
for a smooth compactly supported function $ \phi$, where $\Phi$ denotes the canonical lift $\Phi(x):=\sum_k \frac{X^k}{k!} D^k\phi(x)$, and $y \in \mathbb{R}^d $ and $ \tau \in T $. Clearly $ x \mapsto f_{z,\phi,\tau} (x) $ lies in $ \mathcal{D}^\gamma_{p,q} $ since
\[
f_{z,\phi,\tau} (x) - \Gamma_{x,y} f_{z,\phi,\tau} (x)= \big( \Phi(x)-\Gamma_{x,y}\Phi(y)\big)\star\Gamma_{x,z}\tau \, .
\]

We have the following lemma:
\begin{lemma}\label{lem_density}
If $p,q<\infty$ let $\epsilon=0$ and $\epsilon>0$ otherwise, then the subspace $ \mathcal{E}^\gamma_{p,q} $ is dense in $ \mathcal{D}^\gamma_{p,q} $ in the $ \mathcal{D}^{\gamma-\epsilon}_{p,q}$-topology for $\epsilon$ small enough.
\end{lemma}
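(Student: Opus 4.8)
# Proof proposal for Lemma~\ref{lem_density}

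The plan is to approximate an arbitrary modelled distribution $f \in \mathcal{D}^\gamma_{p,q}$ in the weaker $\mathcal{D}^{\gamma-\epsilon}_{p,q}$-topology by a locally finite combination of the elementary building blocks $f_{z,\phi,\tau}$. The natural device is a smooth partition of unity: fix a smooth compactly supported $\rho$ with $\sum_{z \in \mathbb{Z}^d} \rho(\cdot - z) \equiv 1$, rescale it to lattice spacing $2^{-n}$, and set, on the cell centered at a lattice point $z$, the local freezing $f^{(n)}(x) := \sum_{z} \Phi_z^{(n)}(x) \star \Gamma_{x,z} f(z)$, where $\Phi_z^{(n)}$ is the canonical lift of the rescaled bump at $z$. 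Each summand is precisely of the form $f_{z,\phi,\tau}$ with $\tau = f(z) \in T$, so $f^{(n)} \in \mathcal{E}^\gamma_{p,q}$; local finiteness is automatic since the bumps have bounded overlap. The claim is then that $\|f - f^{(n), \bar Z=Z}\|_{\mathcal{D}^{\gamma-\epsilon}_{p,q}} \to 0$ as $n \to \infty$.

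The estimate splits into the two seminorm contributions of Definition~\ref{def modelled dist}. For the $L^p$-part, on each cell $x$ lies within $O(2^{-n})$ of the relevant $z$, and $f(x) - \Gamma_{x,z} f(z)$ is controlled, component by component at level $\zeta$, by $\|x-z\|^{\gamma-\zeta}_{\mathfrak s}$ times the local $d^{\gamma,\zeta}_{p,q}$-data of $f$; since $\sum_z \Phi^{(n)}_z(x) = $ the constant lift $\mathbf 1$ exactly, the zeroth-order terms cancel and what remains is a sum over the (boundedly many) overlapping cells of terms of size $2^{-n(\gamma-\zeta)}$ in the appropriate averaged sense. Here one must be slightly careful: it is cleanest to bound $\big\||f(x)-f^{(n)}(x)|_\zeta\big\|_{L^p}$ by a discrete-continuous Jensen/Minkowski argument against the quantity $\int_{\|h\|<2^{-n+1}}\|\,|f(x+h)-\Gamma_{x+h,x}f(x)|_\zeta/\|h\|^{\gamma-\zeta}\,\|^q_{L^p}\,dh/\|h\|^{|\mathfrak s|}$, which vanishes as $n\to\infty$ by absolute continuity of the (finite) integral defining $\|f\|_{d^{\gamma,\zeta}_{p,q}}$ — this is exactly why one gains in the $\gamma-\epsilon$ topology rather than at $\gamma$, and why $\epsilon=0$ is permissible when $p,q<\infty$. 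For the increment part $|f(x+h) - \Gamma_{x+h,x}f(x) - (f^{(n)}(x+h) - \Gamma_{x+h,x}f^{(n)}(x))|_\zeta$, I would dyadically decompose the $h$-integral into $\|h\|_{\mathfrak s} \lesssim 2^{-n}$ and $\|h\|_{\mathfrak s} \gtrsim 2^{-n}$; in the first regime $x+h$ and $x$ see essentially the same cells and one uses smoothness of $\Phi^{(n)}$ (each derivative costs $2^{n|k|_{\mathfrak s}}$) together with the model bound $\|\Gamma\|_{\gamma}$ as in the computation right before the lemma, while in the second regime one estimates $f-f^{(n)}$ at the two points separately by the already-handled $L^p$-bound and exploits the extra factor $\|h\|^{-(\gamma-\epsilon-\zeta)}_{\mathfrak s}$ versus $\|h\|^{-(\gamma-\zeta)}_{\mathfrak s}$ to extract a power $2^{-n\epsilon}$ (or, when $p,q<\infty$, to apply dominated convergence directly).

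The main obstacle I expect is bookkeeping the telescoping/cancellation cleanly: one must use at the right moments that $\sum_z \Phi^{(n)}_z \equiv \mathbf 1$ as an element of $\bar T$ (so that $f^{(n)}(x)$ genuinely "interpolates" $f$ and the constant parts cancel in $f(x)-f^{(n)}(x)$), and that $\Gamma$ is multiplicative with respect to $\star$ so that $\Gamma_{x,z}f(z)$ transported to $x+h$ matches $\Gamma_{x+h,z}f(z)$ up to the admissible error. Keeping the $T$-level indices $\zeta$ and the polynomial-derivative indices $k$ separate throughout — and verifying that the Assumption of this section (compatibility of $\star$ with $\Gamma$ and $\Pi$) is invoked only where needed — is the delicate part; the analytic estimates themselves are routine once the combinatorial structure is pinned down, and are entirely parallel to the proof of Lemma~\ref{lem_mollification_bounds} and to \cite[Section 5]{Hairer2017}. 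Finally, a density-by-truncation remark handles the passage from compactly supported $f$ to general $f$ (or one notes the construction is already global and locally finite), completing the proof.
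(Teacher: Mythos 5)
Your plan is close in spirit to the paper's, but the realizations differ in a way worth spelling out. The paper does \emph{not} use a discrete lattice sum at the first stage: it sets $f^n(x)=\int_y \mathbf{1}^n_y(x)\star\Gamma_{x,y}f(y)\,dy$ with $y$ a continuous variable (a mollification rather than a partition-of-unity interpolation), estimates $\|f-f^n\|_{\mathcal{D}^{\gamma-\epsilon}_{p,q}}$ by splitting the increment $f^n(x+h)-\Gamma_{x+h,x}f^n(x)-(f(x+h)-\Gamma_{x+h,x}f(x))$ into the term carrying $\mathbf{1}^n_y(x+h)-\Gamma_{x+h,x}\mathbf{1}^n_y(x)$ and the term carrying $\Gamma_{x+h,x}f(x)-f(x+h)$, and only afterwards remarks that $f^n$ can in turn be approximated by locally finite sums of localized constants. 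You instead pass directly to a discrete lattice sum $f^{(n)}(x)=\sum_z\Phi^{(n)}_z(x)\star\Gamma_{x,z}f(z)$, which has the appealing feature of landing in $\mathcal{E}^\gamma_{p,q}$ immediately, and your small-$h$ / large-$h$ dyadic split of the increment integral is a legitimate alternative to the paper's algebraic telescoping of the two factors under the $\star$-product. Both routes ultimately reduce the error to the increment seminorm $\|f\|_{d^{\gamma,\beta}_{p,q}}$ via Minkowski/Jensen plus the scaling $2^{n|m|_{\mathfrak s}}$ for derivatives of the rescaled bump, and both explain why $\epsilon=0$ is allowed when $p,q<\infty$ (absolute continuity / dominated convergence of the $h$-integral) and why one gives up an $\epsilon$ when $p=\infty$ or $q=\infty$.

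There is, however, one genuine gap in your version that the paper's continuous integral sidesteps: the quantity $f(z)$ at lattice points $z$ is not well-defined for a general $f\in\mathcal{D}^\gamma_{p,q}$, because $f$ is only a measurable map whose components lie in $L^p$, hence determined only up to null sets, and $\bigcup_n\Lambda_n$ is null. Your approximant $f^{(n)}$ therefore depends on the choice of representative, and so do the estimates. The fix is standard and brings you essentially back to the paper's object: replace the pointwise sample $f(z)$ by the transported local average $\frac{1}{|B^n_z|}\int_{B^n_z}\Gamma_{z,y}f(y)\,dy$ (that is, $\bar f^n(z)$ in the notation of Section~\ref{subsec reconstruct smooth}), which is still an element of $T$ so the summand $\Phi^{(n)}_z(x)\star\Gamma_{x,z}\bar f^n(z)$ is again one of the localized constants $f_{z,\phi,\tau}$, and which is measurable in $z$ and depends only on the equivalence class of $f$. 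With that substitution your bookkeeping goes through and the two proofs become very nearly the same argument written in discrete versus continuous form; without it, the statement ``each summand is of the form $f_{z,\phi,\tau}$ with $\tau=f(z)$'' is not meaningful in the Besov setting $p<\infty$.
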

Since the proof of this lemma is rather straightforward but tedious, we deferred it to the appendix.

\section{The Reconstruction Theorem revisited}
One of the linchpins in the theory of regularity structures is the reconstruction theorem, see \cite{Hairer2014} and \cite{Hairer2017} for the respective versions:
\begin{theorem}\label{thm_reconstruction}
Let  $\mathcal{T}=(A,T,G)$ be a regularity structure and let $\alpha=\min A$. If $q=\infty$, let $\bar{\alpha}=\alpha$, else take $\bar{\alpha}<\alpha$. Then, for $\gamma>0$ and every model $Z=(\Pi,\Gamma)\in \mathcal{M}_\mathcal{T}$, there exists a unique continuous linear map  $\mathcal{R}_Z:\mathcal{D}^\gamma_{p,q}\to\mathcal{B}^{\bar{\alpha}}_{p,q}$, called reconstruction operator associated to $Z$, such that:
\begin{equation}\label{reconstruction cond}
\Bigg\| \Big\| \sup_{\eta\in \mathcal{B}^r} \frac{|\langle \mathcal{R}_Z f-\Pi_x f(x),\eta_x^\delta\rangle|}{\delta^\gamma} \Big\|_{L^p}\Bigg\|_{L^q_\delta} \lesssim \|f\|_{\mathcal{D}^\gamma_{p,q}} \|\Pi\|_\gamma (1+\|\Gamma\|_\gamma)
\end{equation}
uniformly over all $f\in \mathcal{D}^\gamma_{p,q}$ and all models $Z=(\Pi,\Gamma)$.
Furthermore, the map $Z\mapsto \mathcal{R}_Z$ is continuous in the following sense: If we denote by $\bar{Z}=(\bar{\Pi}, \bar{\Gamma})$ a second model for $\mathcal{T}$, $\mathcal{R}_{\bar{Z}}$ the associated reconstruction operator, and $\bar{\mathcal{D}}^{{\gamma}}_{p,q}$ the corresponding space, then the inequality
\begin{align}
\Bigg\| \Big\| \sup_{\eta\in \mathcal{B}^r} &\frac{|\langle \mathcal{R}_Z f-\Pi_x \bar{f}(x)-\mathcal{R}_{\bar{Z}}\bar{f}-\bar{\Pi}_x f(x),\eta_x^\delta\rangle|}{\delta^\gamma} \Big\|_{L^p}\Bigg\|_{L^q_\delta} \nonumber\\
\lesssim & \|f,\bar{f}\|_{\mathcal{D}^\gamma_{p,q}} \|\Pi\|_\gamma (1+\|\Gamma\|_\gamma) 
+\|\bar{f}\|_{\mathcal{D}^\gamma_{p,q}} \big(\|\Pi-\bar{\Pi}\|_\gamma (1+\|\Gamma\|_\gamma)+
\|\bar{\Pi}\|_\gamma \|\Gamma-\bar{\Gamma}\|_\gamma\big)
\label{continuity bound}
\end{align}
holds uniformly over models $Z,\bar{Z}\in\mathcal{M}_\mathcal{T}$ and ${f}\in \mathcal{D}^{{\gamma}}_{p,q},\  \bar{f}\in \bar{\mathcal{D}}^{\gamma}_{p,q}$.
\end{theorem}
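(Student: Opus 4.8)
The plan is to obtain the reconstruction operator as the \emph{unique} Lipschitz extension of an explicitly defined operator on the dense subspace described in Section~\ref{subsec_dense_subset}, leveraging the mollification machinery of Section~\ref{section 1}. First I would define $\mathcal{R}_Z$ directly on $\mathcal{E}^\gamma_{p,q}$ by setting $\mathcal{R}_Z f_{z,\phi,\tau} := \phi \cdot \Pi_z \tau$ (extended linearly), and more generally for a smooth model one checks by hand that the candidate $\langle \mathcal{R}_Z f, \eta^\delta_x\rangle := \lim_{n} \langle \Pi_{x_n} f(x_n), \eta^\delta_x\rangle$ along dyadic grids agrees with this explicit formula; on the elementary subspace this is essentially tautological. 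The core analytic step is then to prove the bounds \eqref{reconstruction cond} and \eqref{continuity bound} \emph{first for smooth models}. Here the point is that for a smooth (or merely continuous) model the quantity $\mathcal{R}_Z f - \Pi_x f(x)$ is a genuine function, so $\langle \mathcal{R}_Z f - \Pi_x f(x), \eta^\delta_x\rangle$ can be estimated pointwise by a telescoping/chaining argument over scales $2^{-n}$: one writes the difference across a dyadic decomposition of the test function, uses the modelled-distribution increment bound $|f(y)-\Gamma_{y,x}f(x)|_\zeta \lesssim \|y-x\|^{\gamma-\zeta}$ together with the model bounds on $\Pi$ and $\Gamma$, and sums the resulting geometric series in the scale parameter. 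Because $\gamma>0$ the sum converges and yields exactly the $L^q_\delta L^p$ estimate after integrating the local bounds; the two-model version \eqref{continuity bound} follows by the same chaining applied to the coupled quantity, splitting $fZ - \bar f\bar Z$ into the three displayed error terms.

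Next I would upgrade from smooth to general models. Given an arbitrary model $Z$, Lemma~\ref{lem_mollification_bounds} and Theorem~\ref{thm_approximating_smooth_models} (after, if necessary, first tensoring with the polynomial structure so Assumption~\ref{ass_polynomial_reg_structure} holds, and noting the estimates are insensitive to this) produce smooth models $Z^\lambda$ with $\|Z^\lambda\|_\gamma \lesssim \|Z\|_\gamma$ and $\|Z - Z^\lambda\|_{\gamma,\epsilon} \to 0$. For a fixed $f \in \mathcal{D}^\gamma_{p,q}(Z)$ one transports $f$ to a modelled distribution $f^\lambda \in \mathcal{D}^\gamma_{p,q}(Z^\lambda)$ — on the dense elementary subspace this is immediate since $f_{z,\phi,\tau}$ is defined by the same formula for every model, and in general one uses Lemma~\ref{lem_density} to reduce to that case — with $\|f - f^\lambda\|_{\mathcal{D}^{\gamma-\epsilon}_{p,q}} \to 0$ in the appropriate coupled sense. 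Then \eqref{continuity bound} applied to the pairs $(f^\lambda, Z^\lambda)$, $(f^\mu, Z^\mu)$ shows $(\mathcal{R}_{Z^\lambda} f^\lambda)_\lambda$ is Cauchy in $\mathcal{B}^{\bar\alpha}_{p,q}$ (one must check the target norm $\mathcal{B}^{\bar\alpha}_{p,q}$ is controlled by the left-hand side of \eqref{reconstruction cond}, which is a standard Besov characterization since $\bar\alpha < \gamma$ or $\bar\alpha=\alpha$ according to $q$), and I define $\mathcal{R}_Z f$ as its limit. Passing to the limit in the two estimates — using lower semicontinuity of the $L^q_\delta L^p$ norm and $\|Z^\lambda\|_\gamma \lesssim \|Z\|_\gamma$, $\|Z-Z^\lambda\|_{\gamma,\epsilon} \to 0$ — gives \eqref{reconstruction cond} and \eqref{continuity bound} for all models. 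Linearity and continuity of $\mathcal{R}_Z$ are inherited from the smooth case.

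Finally, uniqueness: if $\mathcal{R}, \mathcal{R}'$ both satisfy \eqref{reconstruction cond} then $\mathcal{R}f - \mathcal{R}'f$ satisfies $|\langle \mathcal{R}f - \mathcal{R}'f, \eta^\delta_x\rangle| \lesssim \delta^\gamma$ locally in an $L^p$-in-$x$, $L^q$-in-$\delta$ sense with $\gamma > 0$, which forces $\mathcal{R}f = \mathcal{R}'f$ as a distribution — this is the one place a short genuinely analytic lemma (that a distribution tested against all rescaled bumps with a positive power of $\delta$ decay must vanish) is needed, but it is elementary and does not use wavelets. I expect the main obstacle to be the chaining estimate for smooth models proving \eqref{reconstruction cond}: one has to be careful that the dyadic telescoping of the test function $\eta^\delta_x$ interacts correctly with the $\Gamma$-transport between grid points and that all error contributions are summable uniformly in $\delta < 1$ and integrable in the $L^q_\delta$ norm, and in the Besov $(p,q)$ setting with $p,q$ possibly infinite the bookkeeping of the mixed norms (rather than the $L^\infty$ sup-norms of Hairer's original argument) requires genuine care — in particular handling the case $q < \infty$, where the $\bar\alpha < \alpha$ slack is exactly what makes the series converge after integration.
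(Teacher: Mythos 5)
Your proposal follows essentially the same route as the paper: prove the reconstruction bounds \eqref{reconstruction cond} and \eqref{continuity bound} for the explicit operator $\mathcal{R}f=(\Pi_xf(x))(x)$ on smooth models by a dyadic telescoping of local averages, then extend to arbitrary models by combining the mollification of models (Theorem~\ref{thm_approximating_smooth_models}) with the density of the universal elementary subspace (Lemma~\ref{lem_density}), passing to the limit through the $\epsilon$-weakened topology and concluding uniqueness from the positive power of $\delta$. The paper packages the limit argument via the $\epsilon$-weakened estimates of Proposition~\ref{last prop} rather than an explicit Cauchy-sequence argument, but this is a cosmetic rather than substantive difference.
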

\begin{remark}\label{explicit_reconstruction}
There are two cases when reconstruction can be written down explicitly:
\begin{enumerate}
\item If $Z$ is a continuous model, then the reconstruction operator maps to functions and it holds that $ \mathcal{R} f (x) = (\Pi_x f(x))(x) $ for $ x \in \mathbb{R}^d $. This was observed in \cite{Hairer2014}, where the relevant bounds follow from the the proof in the general (distribution) setting. See Proposition~\ref{prop_reconstruction_for_smooth_models} below for an elementary proof of this fact.
\item If $Z$ is any model, then for $ z \in \mathbb{R}^d $, $ \tau \in T $, $ f_{z,\tau}(x) : = \Gamma_{x,z} \tau $ is re-constructed by $ \Pi_z \tau $. These modelled distributions are referred to as constants in this article. Notice that this means in particular that for localized 'constants' $ x \mapsto f_{z,\phi,\tau} (x):=\Phi(x) \star\Gamma_{x,z} \tau $ we have
\[
\mathcal{R}(f_{z,\phi,\tau}) = \phi \, \Pi_z \tau
\]
for all $ z \in \mathbb{R}^d$, $\tau \in T $ and smooth, compactly supported $ \phi $.
\end{enumerate}
\end{remark}
The aim of this section is to establish an elementary proof of this theorem. 

\subsection{The reconstruction theorem for smooth models}\label{subsec reconstruct smooth}

In this subsection we shall proof the reconstruction theorem for smooth models, which we state for later reference: \begin{prop}\label{prop_reconstruction_for_smooth_models}
For smooth models the reconstruction operator can be defined via
$$
\mathcal{R}f=\big(\Pi_xf(x)\big)(x) \,
$$
for $ x \in \mathbb{R}^d $, in particular the Bounds \eqref{reconstruction cond} and \eqref{continuity bound} hold true.
\end{prop}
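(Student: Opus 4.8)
The plan is to verify the two bounds directly for the explicit candidate. For a smooth -- or, more generally, continuous -- model $Z=(\Pi,\Gamma)$ each $\Pi_x\tau$ is a genuine continuous function, so $\mathcal{R}f(x):=\big(\Pi_xf(x)\big)(x)$ is a well-defined measurable function; that it lies in $\mathcal{B}^{\bar\alpha}_{p,q}$ will follow from \eqref{reconstruction cond} once the latter is established, by splitting $\langle\mathcal{R}f,\eta_x^\delta\rangle$ into $\langle\mathcal{R}f-\Pi_xf(x),\eta_x^\delta\rangle$ and $\langle\Pi_xf(x),\eta_x^\delta\rangle$, the second being $\lesssim\sum_\zeta\|\Pi\|_\gamma|f(x)|_\zeta\,\delta^\zeta$ with all $\zeta\ge\bar\alpha$. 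The key observation is that, for continuous models, the multiscale scheme underlying reconstruction needs only a smooth partition of unity rather than a wavelet basis. Fix for each $n\ge0$ a smooth partition of unity $\{\rho_{n,y}\}_{y\in\Lambda_n}$ subordinate to an $\mathfrak{s}$-grid $\Lambda_n$ of mesh $2^{-n}$ (so $\sum_y\rho_{n,y}\equiv1$, $\operatorname{supp}\rho_{n,y}$ lies in an $\mathfrak{s}$-ball of radius $\lesssim2^{-n}$ about $y$, with the usual rescaled $C^r$-bounds), and set $\mathcal{R}_nf:=\sum_{y\in\Lambda_n}\big(\Pi_yf(y)\big)\,\rho_{n,y}$, a locally finite sum of continuous functions; one checks that $\mathcal{R}_nf\to\mathcal{R}f$ as $n\to\infty$ (for instance in $\mathcal{D}'$, using continuity of the model).

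The central estimate is the increment bound. With $\sigma_{y,y'}:=f(y')-\Gamma_{y',y}f(y)$ one has, using $\sum_{y'}\rho_{n+1,y'}=\sum_y\rho_{n,y}=1$, that $\mathcal{R}_{n+1}f-\mathcal{R}_nf=\sum_{y,y'}\big(\Pi_{y'}\sigma_{y,y'}\big)\,\rho_{n+1,y'}\rho_{n,y}$, summed over overlapping pairs, for which $\|y-y'\|_{\mathfrak{s}}\lesssim2^{-n}$. Pairing a summand with $\eta_x^\delta$ for $\delta\ge2^{-n}$ gives $\langle\Pi_{y'}\sigma_{y,y'},\,\rho_{n+1,y'}\rho_{n,y}\eta_x^\delta\rangle$, whose test function is a bump of scale $2^{-n}$ near $y'$ of amplitude $\lesssim\delta^{-|\mathfrak{s}|}$; the model bound yields $\lesssim\delta^{-|\mathfrak{s}|}2^{-n|\mathfrak{s}|}\sum_\zeta\|\Pi\|_\gamma|\sigma_{y,y'}|_\zeta\,2^{-n\zeta}$, and -- this is the crux -- since $\|y-y'\|_{\mathfrak{s}}\lesssim2^{-n}$ the modelled-distribution bound carried by $\|f\|_{\mathcal{D}^\gamma_{p,q}}$ is invoked at the \emph{same} scale, $|\sigma_{y,y'}|_\zeta\lesssim2^{-n(\gamma-\zeta)}$ (pointwise when $q=\infty$; an $L^p$-$L^q$-averaged statement otherwise), so the powers $2^{-n\zeta}$ and $2^{-n(\gamma-\zeta)}$ combine to $2^{-n\gamma}$ irrespective of $\zeta$. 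Summing over the $\lesssim(2^n\delta)^{|\mathfrak{s}|}$ relevant pairs and then over $n\ge n_\delta$ with $2^{-n_\delta}\simeq\delta$, the geometric series converges because $\gamma>0$ and gives $|\langle\mathcal{R}f-\mathcal{R}_{n_\delta}f,\eta_x^\delta\rangle|\lesssim\|\Pi\|_\gamma\delta^\gamma$. The coarsest piece is handled the same way: $\mathcal{R}_{n_\delta}f-\Pi_xf(x)=\sum_y\big(\Pi_y\sigma_{x,y}\big)\rho_{n_\delta,y}$ and on $\operatorname{supp}\eta_x^\delta$ one has $\|x-y\|_{\mathfrak{s}}\lesssim\delta$, so $|\sigma_{x,y}|_\zeta\lesssim\delta^{\gamma-\zeta}$ and the pairing is again $\lesssim\|\Pi\|_\gamma\delta^\gamma$. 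Adding the two gives \eqref{reconstruction cond} pointwise in $(x,\delta)$; passing to the $L^p_x$ and $L^q_\delta(d\delta/\delta)$ norms is the routine, if lengthy, Besov bookkeeping -- $L^p$-almost-disjointness of the $\rho_{n,\cdot}$, replacing grid sums by integrals, and a Hardy/Schur-type argument to merge the $n$-series with the $\delta$-integral -- and accounts for the $(1+\|\Gamma\|_\gamma)$ factor.

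Finally, \eqref{continuity bound} is obtained by running the identical construction for two models $Z,\bar Z$ and modelled distributions $f,\bar f$ simultaneously, the only new input being a discrete product rule, $\Pi_{y'}\sigma_{y,y'}-\bar\Pi_{y'}\bar\sigma_{y,y'}=(\Pi_{y'}-\bar\Pi_{y'})\sigma_{y,y'}+\bar\Pi_{y'}(\sigma_{y,y'}-\bar\sigma_{y,y'})$, together with $\sigma_{y,y'}-\bar\sigma_{y,y'}=\big(f(y')-\bar f(y')-\Gamma_{y',y}(f(y)-\bar f(y))\big)-(\Gamma_{y',y}-\bar\Gamma_{y',y})\bar f(y)$, whose terms are controlled respectively by $\|f,\bar f\|_{\mathcal{D}^\gamma_{p,q}}$ and by $\|\Gamma-\bar\Gamma\|_\gamma\|\bar f\|_{\mathcal{D}^\gamma_{p,q}}$; reassembling the three resulting contributions reproduces the right-hand side of \eqref{continuity bound}. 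I anticipate that the main obstacle is not any individual inequality but the uniform bookkeeping of the multiscale estimate through the two-parameter $(p,q)$-Besov norms, and -- on the conceptual level -- maintaining throughout the discipline that every pairing of $\Pi_\cdot$ with a test function is taken at the scale matching the distance over which the relevant increment $\sigma_{\cdot,\cdot}$ is measured; this balancing is exactly what circumvents the fact that $(\Pi_y\tau)(y)$ is \emph{not} controlled by $\|\Pi\|_\gamma$ for homogeneities $\zeta<0$, and is why the partition of unity, rather than a naive telescoping in a single scale parameter, is the correct device.
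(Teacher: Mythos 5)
Your proposal follows essentially the same route as the paper: a multiscale telescoping over a smooth dyadic partition of unity, using the model bound and the modelled-distribution increment bound at \emph{matching} scales, with the coarsest scale handled directly via $\Gamma_{x,\tilde y}f(\tilde y)-f(x)$, and the continuity bound \eqref{continuity bound} obtained by a product rule on the same decomposition. The conceptual point you highlight---that pairing $\Pi_{\cdot}$ with a bump at scale $2^{-n}$ must always be matched with an increment of $f$ over distance $\sim 2^{-n}$, so as never to need a pointwise bound on $\Pi_y\tau$---is precisely the mechanism of the paper's Lemma~\ref{average bounds} and the identity \eqref{difference over diadics}.

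There is, however, one genuine gap for the general $(p,q)$ case. You set $\mathcal{R}_n f=\sum_{y\in\Lambda_n}\bigl(\Pi_y f(y)\bigr)\rho_{n,y}$, i.e.\ you \emph{evaluate $f$ pointwise at grid points}. When $p<\infty$ an element of $\mathcal{D}^\gamma_{p,q}$ is only a measurable map defined up to Lebesgue-null sets, so $f(y)$ for $y\in\Lambda_n$ is not well-defined, and---more importantly---the Besov norm $\|f\|_{d^{\gamma,\zeta}_{p,q}}$ involves an $L^p(dx)$-integral over $x\in\mathbb{R}^d$ and cannot be directly converted into a bound on the discrete grid quantities $|f(y')-\Gamma_{y',y}f(y)|_\zeta$ that your telescoping produces. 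You acknowledge this with the phrase ``an $L^p$-$L^q$-averaged statement otherwise,'' but do not supply it; this is exactly where the paper's proof diverges in detail. The paper replaces the grid samples $f(k)$ by the cube averages $\bar f^n(k)=|Q^n_k|^{-1}\int_{Q^n_k}\Gamma_{k,y}f(y)\,dy$, defines the scale-$n$ approximant as $\sum_k \mathbb{1}^n_k\,\Pi_k\bar f^n(k)$, and proves (Lemma~\ref{average bounds}) that the discrete $\ell^p_n$-increments of $\bar f^n$ are controlled by $\|f\|_{\mathcal{D}^\gamma_{p,q}}(1+\|\Gamma\|_\gamma)$; the factor $(1+\|\Gamma\|_\gamma)$ appears here, not from ``replacing grid sums by integrals'' as you surmise. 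The same averaging also resolves the other loose end in your argument: the claimed convergence $\mathcal{R}_n f\to\mathcal{R}f$ ``using continuity of the model'' presupposes continuity of $f$, which a general modelled distribution does not enjoy; with cube averages the analogous statement holds for a.e.\ $x$ and that suffices. For $p=q=\infty$ your version is essentially complete, and there the paper additionally offers a shorter variant (splitting at the mollification scale $\lambda$) that your argument does not use but does not need.
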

We shall first present a proof of a less general version of this proposition first. This serves to illustrates the arguments at the heart of the proof more clearly and without being burdened by as much notational/technical clutter. Also it shows a simpler argument for the case $p=q=\infty$, which is the one usually used in practice. Lastly, it also allows to record important identities which will allow to streamline the general proof considerably.

Let us recall the following notations, many of which from \cite{Hairer2017}:
\begin{enumerate}

\item We define the rescaled grid $\Lambda_n:=\big\{(2^{-n\mathfrak{s}_1}k_1,...,2^{-n\mathfrak{s}_d}k_d)\ :\ k_i\in \mathbb{Z}\big\}$. For a constant $C>0$ we let $\Lambda^C_{n} = B(0,C2^{-n}) \cap \Lambda_n$ be a localized version of $ \Lambda_n $.
\item\label{notation} Let $\mathbb{1}$ be a smooth function with compact support, such that ${(\mathbb{1}_k})_{k\in\Lambda_0}$, where we use the notation $\mathbb{1}_k(\cdot):=\mathbb{1}(\cdot-k)$, is a smooth partition of unity adapted to cubes $B^1_k$ centred at the points $k\in \mathbb{Z}^d$.
\item We write for $n\in\mathbb{N},k\in \Lambda_n$: $\mathbb{1}^n_k(x)=\mathbb{1}(\frac{x_1-k_1}{2^{n\mathfrak{s}_1}},...,\frac{x_d-{k_d}}{2^{n\mathfrak{s}_d}})$ for the rescaled partition of unity adapted to the rescaled cubes centered at the points in $\Lambda_n$. We shall denote such a cube by $B^n_k$. 
\end{enumerate}
\begin{remark}
Note that this particular choice of partition of unity will never play a role, indeed any family smooth partitions of unity $\{\phi_k^n\}_{k\in\Lambda_n}$ adapted to the cubes $B_k^n$ satisfying the bound 
$$ \|D^m \phi_k^n\|_{L^\infty} \lesssim 2^{n |m|_\mathfrak{s}}$$
uniformly over $k\in\Lambda, m\in\mathbb{N}^d, n\in\mathbb{N}$ would do the part. (Of course the differentiability condition could be relaxed too.)
\end{remark}
\begin{enumerate}
\item[(4)] Given a sequence $(a_k)_{k\in\Lambda_n}\in\mathbb{R}$ indexed by the rescaled grid, we write 
$$ |a|^p_{l^p_n}:=\sum_{k\in \Lambda_n} |a_k|^p \frac{1}{2^{|\mathfrak{s}|n}}.$$
\item[(5)] For any function $f:\mathbb{R}^d\to T$ define $$\bar{f}^n:\Lambda_n\to T_{\gamma},\   \bar{f}^n(x)=\frac{1}{|Q^n_x|}\int_{Q^n_x} \Gamma_{x,y}f(y)dy=\int_{Q^n_x} 2^{n|\mathfrak{s}|}\Gamma_{x,y}f(y)dy \, .$$
\end{enumerate}
We record the following simple observation, which can be seen as combination of \cite[Remark 2.12 \& Theorem 2.15]{Hairer2017} or checked directly.
\begin{lemma}\label{average bounds}
In the notation above, for $f\in \mathcal D_{p,q}^\gamma$ the following bound holds
\begin{equation}
\bigg(\sum_{n\geq 0} \sum_{h\in \Lambda^C_{n+1}} \bigg\| \frac{\big|\bar{f}^{n} (x) - \Gamma_{x,x+h} \bar{f}^{n+1}(x+h)\big|_\zeta}{2^{-n(\gamma-\zeta)}}\bigg\|_{l^p_n} \bigg)^{\frac1{q}} \lesssim \|f\|_{\mathcal{D}^\gamma_{p,q}} (1+\|\Gamma\|_\gamma)\, ,
\end{equation}
and when working with a second model and $g\in \tilde {\mathcal D}_{p,q}^\gamma$ we have
\begin{align}
\bigg(\sum_{n\geq 0} \sum_{h\in \Lambda^C_{n+1}} \bigg\| &\frac{\big|\bar{f}^{n} (x) - \Gamma_{x,x+h} \bar{f}^{n+1}(x+h)-\big(\bar{g}^{n} (x) - \tilde\Gamma_{x,x+h} \bar{g}^{n+1}(x+h)\big)\big|_\zeta}{2^{-n(\gamma-\zeta)}}\bigg\|_{l^p_n} \bigg)^{\frac1{q}}\\
& \lesssim \|f,g\|_{\mathcal{D}^\gamma_{p,q}} (1+\|\Gamma\|_\gamma)+\|\Gamma-\tilde\Gamma \|_\gamma \|g\|_{\mathcal{D}^\gamma_{p,q}} \, ,
\end{align}
where the implicit constants only depends on $\gamma$ and the regularity structure.
\end{lemma}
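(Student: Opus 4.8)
The plan is to reduce the two displayed estimates to the defining norms of $\mathcal{D}^\gamma_{p,q}$ by a telescoping/averaging argument. First I would fix $\zeta\in A\cap(-\infty,\gamma)$ and rewrite the increment $\bar f^{\,n}(x)-\Gamma_{x,x+h}\bar f^{\,n+1}(x+h)$ by unfolding the definition of $\bar f^{\,n}$ as the average of $\Gamma_{x,y}f(y)$ over $Q^n_x$. Using the cocycle property $\Gamma_{x,y}=\Gamma_{x,x+h}\Gamma_{x+h,y}$ one can move everything under a single average and express the quantity as an average over $y$ in $Q^n_x\cup Q^{n+1}_{x+h}$ of terms of the form $\Gamma_{x,y}\big(f(y)-\Gamma_{y,x'}f(x')\big)$ for suitable nearby base points $x,x'$; since $\|x-y\|_\mathfrak{s}\lesssim 2^{-n}$ on these cubes, the factor $\Gamma_{x,y}$ contributes at most $\|\Gamma\|_\gamma$ times powers $2^{-n(\beta-\zeta)}$ for $\beta\ge\zeta$, which is where the $(1+\|\Gamma\|_\gamma)$ prefactor and the correct scaling $2^{-n(\gamma-\zeta)}$ come from.

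Next I would take the $l^p_n$ norm in $x$ over $\Lambda_n$. The key point is that, because $(\mathbb{1}^n_k)$ is adapted to the cubes $B^n_k$ and the cube $Q^n_x$ around a grid point only overlaps boundedly many neighbouring cubes, the discrete $l^p_n$ average of cube-averages of $|f(\cdot+h')-\Gamma_{\cdot+h',\cdot}f(\cdot)|_\zeta$ is controlled by the continuous $L^p$ norm appearing in $\|f\|_{d^{\gamma,\zeta}_{p,q}}$; a discrete Jensen/Minkowski inequality does the job after a change of variables that turns the cube average into an integral over a ball of radius $\lesssim 2^{-n}$ in the shift variable. Summing the resulting bounds over $h\in\Lambda^C_{n+1}$ only costs a constant (the number of such $h$ is bounded uniformly in $n$). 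Finally, raising to the power $1/q$ and summing over $n$, I would recognise the dyadic sum $\sum_n \big(\cdots\big)$ as a discretisation of the Besov-type integral $\int_{B^\mathfrak{s}_1}\big\|\,|f(x+h)-\Gamma_{x+h,x}f(x)|_\zeta/\|h\|_\mathfrak{s}^{\gamma-\zeta}\big\|^q_{L^p}\,dh/\|h\|_\mathfrak{s}^{|\mathfrak{s}|}$, so that the whole expression is $\lesssim \|f\|_{d^{\gamma,\zeta}_{p,q}}+\|\,|f(x)|_\zeta\|_{L^p}\lesssim\|f\|_{\mathcal{D}^\gamma_{p,q}}$, uniformly; summing over the finitely many $\zeta$ gives the stated bound. (For $q=\infty$ the outer sum becomes a supremum over $n$ and the argument simplifies.)

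For the two-model estimate the argument is identical, run on the difference. Writing $\Gamma_{x,y}=\bar\Gamma_{x,y}+(\Gamma_{x,y}-\bar\Gamma_{x,y})$ and subtracting the analogous expression for $g$, one splits the difference of increments into (i) a term where $\Gamma$ and $\bar\Gamma$ appear symmetrically and act on $f(y)-\Gamma_{y,x'}f(x')-(g(y)-\bar\Gamma_{y,x'}g(x'))$, which is bounded by $\|f,g\|_{\mathcal{D}^\gamma_{p,q}}(1+\|\Gamma\|_\gamma)$, and (ii) a remainder where the factor $\Gamma-\bar\Gamma$ is pulled out, contributing $\|\Gamma-\bar\Gamma\|_\gamma\|g\|_{\mathcal{D}^\gamma_{p,q}}$; both are then handled by the same $l^p_n$-to-$L^p$ and dyadic summation steps.

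The main obstacle I anticipate is purely bookkeeping rather than conceptual: keeping track of the homogeneities $\beta\ge\zeta$ produced when $\Gamma_{x,y}$ acts, making sure the worst power of $2^{-n}$ that survives is exactly $2^{-n(\gamma-\zeta)}$ and not something weaker, and verifying that the passage from the discrete $l^p_n$ sums over $\Lambda_n$ (with cube averages) to the continuous $L^p$–Besov integral is legitimate with constants independent of $n$ and of the model. This is exactly the content of \cite[Remark 2.12 \& Theorem 2.15]{Hairer2017}, so I would either cite it directly or reproduce the short comparison between the dyadic discretisation and the Besov integral, using that the partition of unity has uniformly bounded overlap and the scaling $\|D^m\mathbb{1}^n_k\|_{L^\infty}\lesssim 2^{n|m|_\mathfrak{s}}$.
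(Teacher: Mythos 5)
Your proposal is correct and follows essentially the approach the paper intends: the paper itself gives no proof of this lemma, stating only that it "can be seen as a combination of [Hairer--Labb\'e, Remark 2.12 \& Theorem 2.15] or checked directly," and your plan—unfolding $\bar f^n$ as a cube average, telescoping via the cocycle identity $\Gamma_{x,y}=\Gamma_{x,x+h}\Gamma_{x+h,y}$, bounding the action of $\Gamma_{x,y}$ by $(1+\|\Gamma\|_\gamma)$ with powers $2^{-n(\beta-\zeta)}$, and then passing from the discrete $l^p_n$/dyadic sum to the continuous Besov integral via bounded overlap and Jensen—is precisely the "check directly" route (and is the content of the cited results). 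The two-model variant by inserting $\Gamma=\tilde\Gamma+(\Gamma-\tilde\Gamma)$ is likewise the standard splitting.
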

\begin{proof}[Proof of Prop. \ref{prop_reconstruction_for_smooth_models} for $p=q=\infty$ \& mollified models]
Since we assume our model is mollified, there exists some $\lambda\in (0,1)$ and $\phi\in C_c^\infty$ such that it is obtained by mollification with $\phi^\lambda$. Whenever there is a $\lambda$ appearing in the consequent arguments this one is meant. We let $F(y):=\big(\Pi_yf(y)\big)(y)$ for $ y \in \mathbb{R}^d$, and note that
$$\big(\Pi_xf(x)-F\big)(y)=\Pi_x\big(f(x)-\Gamma_{x,y}f(y)\big)(y) \, .$$

We start with the observation that for $y,z\in \mathbb{R}$ satisfying $|x-y|,|x-z|<\delta$ we have as a consequence of \eqref{pointwise good estimate} the following bound:
\begin{align*}
|\Pi_x\big(f(x)-\Gamma_{x,y}f(y)\big)(z)|&\leq \sum_{\zeta<\gamma} |\Pi_x Q_\zeta\big(f(x)-\Gamma_{x,y}f(y)\big)(z)|\\
&\lesssim \|\Pi\|_\gamma\sum_{\zeta<\gamma} |x-z|_{\mathfrak{s}}^\zeta |f(x)-\Gamma_{x,y}f(y)|_\zeta \\ 
&\lesssim \|\Pi\|_\gamma\|f\|_{\mathcal{D}^\gamma_{\infty,\infty}} \sum_{\zeta<\gamma} |x-z|_{\mathfrak{s}}^\zeta |x-y|_{\mathfrak{s}}^{\gamma-\zeta} \, .
\end{align*}
For $|x-y|<\delta$ this implies 
\begin{equation}\label{easiest estimate}
|\big(\Pi_xf(x)-F\big)(y)|\lesssim \|\Pi\|_\gamma\|f\|_{\mathcal{D}^\gamma_{\infty,\infty}} |x-y|_{\mathfrak{s}}^\gamma,
\end{equation} which implies the Bound \eqref{reconstruction cond} for test functions $\eta_x^\delta$ with $\delta\leq \lambda$. 

Now we consider the case of $\lambda\leq \delta$, which is considerably closer to the proof of the proposition in the general case. 
\begin{remark}
The gist of the next argument is that while we can control the integrand of  $\int_y \big(F(y)-\Pi_xf(x)(y)\big) \eta_x(y) dy$ well when $x$ and $y$ are close this is not the case when they are not. We resolve this problem by comparing local averages on dyadic scales and bootstrapping our good bounds from one scale to the next by localizing appropriately. While this is close in spirit to the classical argument via wavelets, we point out that we only use the existence of a partition of unity and localizing simply means localizing in space (opposed to "space and frequency" as done via wavelets).
\end{remark}

Fix $N_0$ maximal such that, if we denote by $x_0=x_0(x)$ the closest point to $x$ in $\Lambda_{N_0}$, the inclusion
$B_\delta(x) \subset Q^{N_0}_{x_0(x)}$ 
holds uniformly over $x\in \mathbb{R}$. (Note that this implies $\delta\sim 2^{-N_0}$.)
Also fix $N_1$ such that $\lambda\in [2^{-N_1}, 2^{(-N_1+1)}]$. Note that for $y\in B_\delta(x)$ we have
\begin{align}
F(y)&-\Pi_xf(x)(y) 
=\sum_{k\in \Lambda_{N_1}\cap B^{N_0}_{x_0}} \mathbb{1}^{N_1}_k(y) \frac{1}{|B^{N_1}_k|}\int_{B_k^{N_1}}\big(\Pi_{y}f(y)-\Pi_{\tilde{y}}f(\tilde{y})\big)(y) d\tilde{y}\label{three summands}\\
&+\sum_{k\in \Lambda_{N_1}\cap B^{N_0}_{x_0}}  \mathbb{1}^{N_1}_k(y) \frac{1}{|B^{N_1}_k|}\int_{B_k^{N_1}}\Pi_{\tilde{y}}f(\tilde{y})(y) d\tilde{y}-{\mathbb{1}}^{N_0}_{x_0}(y)\frac{1}{|B^{N_0}_{x_0}|}\int_{B^{N_0}_{x_0}}\Pi_{\tilde{y}}f(\tilde{y})(y) d\tilde{y}\nonumber\\
&+\frac{1}{|B^{N_0}_{x_0}|}\int_{B^{N_0}_{x_0}}\big(\Pi_{\tilde{y}}f(\tilde{y})-\Pi_xf(x)\big)(y) d\tilde{y}\nonumber\\
&=\sum_{k\in \Lambda_{N_1}\cap B^{N_0}_{x_0}} \mathbb{1}^{N_1}_k(y) \frac{1}{|B^{N_1}_k|}\int_{B_k^{N_1}}\big(\Pi_{y}f(y)-\Pi_{\tilde{y}}f(\tilde{y})\big)(y) d\tilde{y}\nonumber\\
&+\sum_{k\in \Lambda_{N_1}\cap B^{N_0}_{x_0}}  \mathbb{1}^{N_1}_k(y)\Pi_k\bar{f}^{N_1}(k)(y)-\mathbb{1}^{N_0}_{x_0}(y) \Pi_x\bar{f}^{N_0}_x(y)\nonumber\\
&+\frac{1}{|B^{N_0}_{x_0}|}\int_{B^{N_0}_{x_0}}\big(\Pi_{\tilde{y}}f(\tilde{y})-\Pi_xf(x)\big)(y) d\tilde{y} \nonumber\\
&=\sum_{k\in \Lambda_{N_1}\cap B^{N_0}_{x_0}} \mathbb{1}^{N_1}_k(y) \frac{1}{|B^{N_1}_k|}\int_{B_k^{N_1}}\big(\Pi_{y}f(y)-\Pi_{\tilde{y}}f(\tilde{y})\big)(y) d\tilde{y}\label{summand 1}\\
&+\sum_{n=N_0}^{N_1-1} \Big(\sum_{k'\in \Lambda_{n+1}\cap B^{N_0}_{x_0}}  \mathbb{1}^{n+1}_k(y)\Pi_k\bar{f}^{n+1}(k')(y)-\sum_{k\in \Lambda_{n}\cap B^{N_0}_{x_0}}\mathbb{1}^{n}_{k}(y)\Pi_{k}\bar{f}^{n}(k)(y)\Big)\label{summand 2}\\
&+\frac{1}{|B^{N_0}_{x_0}|}\int_{B^{N_0}_{x_0}}\big(\Pi_{\tilde{y}}f(\tilde{y})-\Pi_xf(x)\big)(y) d\tilde{y} \ .\label{summand 3}
\end{align}

When testing against $\eta^\delta_x$ the first summand~\eqref{summand 1} can be bounded using the point-wise (local) Bound~\eqref{easiest estimate}, the last summand~\eqref{summand 3} can be bounded using:
\begin{align*}
&|\Pi_x\big(  \Gamma_{x,\tilde{y}}f(\tilde{y})-\Pi_xf(x)\big)(\eta_x^\delta)|  \\
\lesssim & \|\Pi\|_\gamma \sum_{\zeta<\gamma} |\Gamma_{x,\tilde{y}}f(\tilde{y})-f(x)|_\zeta\delta^\zeta \\
\lesssim & \|f\|_{\mathcal{D}^\gamma_{\infty,\infty}} \|\Pi\|_\gamma\sum_{\zeta<\gamma} |x-\tilde{y}|_\mathfrak{s}^{\gamma-\zeta}\delta^\zeta\lesssim\|f\|_{\mathcal{D}^\gamma_{\infty,\infty}}  \|\Pi\|_\gamma\delta^\gamma \, , 
\end{align*}
which follows from the definition of a model. We still have to bound the second summand. We note that for $N_0\leq n<N_1$.
\begin{align}
&\int \Big(\sum_{k'\in \Lambda_{n+1}\cap B_{N_0}(x)} \mathbb{1}_{B_{n+1}(k')}\Pi_{k'}\bar{f}^{n+1}({k'})(y)-
\sum_{{k}\in \Lambda_{n}\cap B_{N_0}(x)} \mathbb{1}_{B_{n}({k})} \Pi_z\bar{f}^n({k})(y)\Big) \eta_x^{\delta} (y)\, dy\nonumber\\
&=\sum_{{k'}\in \Lambda_{n+1}\cap B_{N_0}(x)} \Pi_{k'}\bar{f}^{n+1}({k'})(\mathbb{1}_{B_{n+1}({k'})}\eta^\delta_x)-
\sum_{k\in \Lambda_{n}\cap B_{N_0}(x)} \Pi_{k}\bar{f}^n(z')(\mathbb{1}_{B_{n}(k)}\eta^\delta_x)\nonumber\\
&=\sum_{{k}\in \Lambda_{n}\cap B_{N_0}(x)} \sum_{{k'}\in \Lambda_{n+1}\cap B_{N_0}(x)}\Big( \Pi_{k'}\bar{f}^{n+1}({k'})(\mathbb{1}_{B_{n}(k)}\mathbb{1}_{B_{n+1}({k'})}\eta^\delta_x)-
\Pi_{k}\bar{f}^n(k)(\mathbb{1}_{B_{n+1}({k'})}\mathbb{1}_{B_{n}(k)}\eta^\delta_x)\Big)\nonumber\\
&=\sum_{k\in \Lambda_{n}\cap B_{N_0}(x)} \sum_{{k'}\in \Lambda_{n+1}\cap B_{N_0}(x)} \Big( \Pi_{k'}\bar{f}^{n+1}({k'})-
\Pi_{k}\bar{f}^n(k)\Big)(\mathbb{1}_{B_{n}(k)}\mathbb{1}_{B_{n+1}({k'})}\eta^\delta_x)\nonumber\\
&=\sum_{k\in \Lambda_{n}\cap B_{N_0}(x)} \sum_{{k'}\in \Lambda_{n+1}\cap B_{N_0}(x)} \Big( \Pi_{k'}\bar{f}^{n+1}({k'})-
\Pi_{k}\bar{f}^n(k)\Big)(\mathbb{1}_{B_{n}(k)}\mathbb{1}_{B_{n+1}({k'})}\eta^\delta_x)\nonumber\\
&=\sum_{k\in \Lambda_{n}\cap B_{N_0}(x)} \sum_{{k'}\in \Lambda_{n+1}\cap B_{N_0}(x): |{k'}-k|_\infty\leq C 2^{-n}}\Big( \Pi_{k'}\bar{f}^{n+1}({k'})-
\Pi_{k}\bar{f}^n(k)\Big)(\mathbb{1}_{B_{n}(k)}\mathbb{1}_{B_{n+1}({k'})}\eta^\delta_x)\nonumber\\
&=\sum_{k\in \Lambda_{n}\cap B_{N_0}(x)} \sum_{{k'}\in \Lambda_{n+1}\cap B_{N_0}(x): |{k'}-k|_\infty\leq C 2^{-n}} \Pi_{k}\Big(\Gamma_{k,{k'}}\bar{f}^{n+1}({k'})-
\bar{f}^n(k)\Big)(\mathbb{1}_{B_{n}(k)}\mathbb{1}_{B_{n+1}({k'})}\eta^\delta_x) \nonumber\\ 
&=\sum_{k\in \Lambda_{n}\cap B_{N_0}(x)} \sum_{h\in \Lambda_{n+1}^C} \Pi_{k}\Big(\Gamma_{k,k+h}\bar{f}^{n+1}(k+h)-
\bar{f}^n(k)\Big)(\mathbb{1}_{B_{n}(k)}\mathbb{1}_{B_{n+1}(k+h)}\eta^\delta_x) \label{difference over diadics}\, .
\end{align}
Since $\frac{\delta^d}{2^{-nd}}\mathbb{1}_{B_{n}(z')}\mathbb{1}_{B_{n+1}(z)}\eta^\delta_x=\Psi^{2^{-n}}_z$ for some $\Psi\in \mathcal{B}^r$, we can bound
\begin{align}
\Big|&\sum_{k\in \Lambda_{n}\cap B_{N_0}(x)} \sum_{h\in \Lambda_{n+1}^C} \Pi_{k}\Big(\Gamma_{k,k+h}\bar{f}^{n+1}(k+h)-
\bar{f}^n(k)\Big)(\mathbb{1}_{B_{n}(k)}\mathbb{1}_{B_{n+1}(k+h)}\eta^\delta_x) \Big|\nonumber\\
\leq& \|\Pi\|_\gamma \sum_{\alpha<\gamma}\frac{2^{-n|\mathfrak{s}|}}{2^{-N_0|\mathfrak{s}|}}\sum_{k\in \Lambda_{n}\cap B_{N_0}(x)} \sum_{h\in \Lambda_{n+1}^C}|\Gamma_{k,k+h}\bar{f}^{n+1}(k+h)-
\bar{f}^n(k)|_\alpha 2^{-\alpha n}\label{bound on difference over diadic}
\end{align}
Using Lemma~\ref{average bounds} for the case $p=q=\infty$ this is bounded by
\begin{align*}
\sum_{\alpha<\gamma}\frac{2^{-n|\mathfrak{s}|}}{2^{-N_0|\mathfrak{s}|}}&\sum_{k\in \Lambda_{n}\cap B_{N_0}(x)} \sum_{h\in \Lambda_{n+1}^C}|\Gamma_{k,k+h}\bar{f}^{n+1}(k+h)-
\bar{f}^n(k)|_\alpha 2^{-\alpha n}\\
&\lesssim \|f\|_{\mathcal{D}^\gamma_{\infty,\infty}}(1+\|\Gamma\|_\gamma) \sum_{\alpha<\gamma}\frac{2^{-n|\mathfrak{s}|}}{2^{-N_0|\mathfrak{s}|}}\sum_{z'\in \Lambda_{n}\cap B_{N_0}(x)}  2^{-n(\gamma-\alpha)} 2^{-\alpha n}\\
&=\|f\|_{\mathcal{D}^\gamma_{\infty,\infty}}(1+\|\Gamma\|_\gamma) \sum_{\alpha<\gamma}\frac{2^{-n|\mathfrak{s}|}}{2^{-N_0|\mathfrak{s}|}}\sum_{z'\in \Lambda_{n}\cap B_{N_0}(x)}  2^{-n\gamma}\\
&\lesssim \|f\|_{\mathcal{D}^\gamma_{\infty,\infty}}(1+\|\Gamma\|_\gamma) \sum_{\alpha<\gamma} 2^{-n\gamma} \, .
\end{align*}
Thus we conclude with 
\begin{align}\label{concluding estimate}
|\langle F(y)-\Pi_xf(x),\eta^\delta\rangle| \lesssim \|f\|_{\mathcal{D}^\gamma_{\infty,\infty}}  \|\Pi\|_\gamma(1+\|\Gamma\|_\gamma) (\delta^\gamma +\sum_{N_0}^{N_1} 2^{-n\gamma})\lesssim \|f\|_{\mathcal{D}^\gamma_{\infty,\infty}}  \|\Pi\|_\gamma(1+\|\Gamma\|_\gamma) \delta^\gamma \, .
\end{align}
Thus we conclude $F=\mathcal{R}f$, and the reconstruction bound \eqref{reconstruction cond}.
The continuity bound \eqref{continuity bound} can be obtained in an analogous manner, applying the same arguments to
\begin{align}
&\Pi_x f(x)(y)- F(y)-\Big( \bar{\Pi}_x\bar f(x)(y)-\bar{F}(y)\Big) \nonumber\\
&=\Pi_xf(x)(y)-\Pi_y f(y)(y)-\Big(\bar{\Pi}_x\bar f(x)(y)-\bar\Pi_y\bar f(y)(y)\Big)\nonumber\\
&=\Pi_x\big({f}(x)-\Gamma_{x,y}{f}(y)-\bar f(x)+\bar\Gamma_{x,y}\bar{f}(y) \big)(y)+\big(\Pi_x-\bar \Pi_x\big)\big(\bar\Gamma_{x,y}\bar{f}(y)-\bar{f}(x)\big)(y)\label{difference}\, .
\end{align}
\end{proof}
\begin{remark} Before we proceed to the general case, let us observe that the argument above works for $p<\infty$, but as soon as $q<\infty$ it does not work any more, since our ``small scale bound'' \eqref{easiest estimate} is weaker than the required bound \eqref{reconstruction cond}. Let us, however, note that the argument for larger scales did not crucially depend on $N_1$ as witnessed by \eqref{concluding estimate} and thus we could by letting all scales be ``large'', i.e.~by letting $N_1\to \infty$, modify our strategy. This is done next.
\end{remark}
\begin{proof}[General case of Prop. \ref{prop_reconstruction_for_smooth_models}]
We introduce another piece of notation from \cite{Hairer2017}: We write for $n_0\in\mathbb{N}$ and $f\in L_\delta^{q}$ 
$$\|f\|_{L_{n_0}^q}:=\int_{2^{-n_0-1}}^{2^{-n_0}} |f(\delta)|^q|\frac{d\delta}{\delta}.$$
and note that we have the identity $\|f\|_{L^q_\delta}=\| \| f\|_{L^q_{n_0}}\|_{l^q(n_0)}$.

Let again $N_0=N_0(\delta)$ be the largest integer such that, if we denote by $x_0=x_0(x)$ be the closest point to $x$ in $\Lambda_{N_0}$, the inclusion
$B_\delta (x) \subset B^{N_0}_{x_0(x)}$ 
holds uniformly over $x\in \mathbb{R}$. Recall that this implies $\delta\sim 2^{-N_0}$ and in particular for $\delta \in [2^{-(n_0+1)},2^{-n_0}]$ this implies $2^{-N_0}\sim 2^{-n_0}$.

Recalling Equation~\eqref{three summands} we can write\footnote{Note that for $\delta\sim 2^{-n_0}$ this is reminiscent of the identity $$\big(\mathcal{R}f-\Pi_xf(x)\big)(\eta^\delta_x)=\big(\mathcal{R}_{n_0} f-\mathcal{P}_{n_0}\Pi_xf(x)\big)(\eta^\delta_x)+\sum_{n\geq n_0} \big(\mathcal{R}_{n+1}f-\mathcal{R}_{n}f-\mathcal{P}_n^\perp\Pi_xf(x)\big)(\eta^\delta_x)$$ in \cite[Section 3.2]{Hairer2017}.}
\begin{align}
\langle F-\Pi_xf(x), \eta^\delta \rangle&= \sum_{n=N_0}^{\infty} \Big(\sum_{k'\in \Lambda_{n+1}\cap B^{N_0}_{x_0}}  \mathbb{1}^{n+1}_k(\cdot)\Pi_{k'}\bar{f}^{n+1}(k')-\sum_{k\in \Lambda_{n}\cap B^{N_0}_{x_0}}\mathbb{1}^{n}_{k}(\cdot)\Pi_{k}\bar{f}^{n}(k)\Big)(\eta^\delta)\\
&+\frac{1}{|B^{N_0}_{x_0}|}\int_{B^{N_0}_{x_0}}\big(\Pi_{\tilde{y}}f(\tilde{y})-\Pi_xf(x)\big)(\eta^\delta) d\tilde{y} ,
\end{align}
which we obtain by letting $N_1\to \infty$, since then the first summand \eqref{summand 1} converges to $0$ as we are working with smooth models. 

Now we estimate the first summand
\begin{align*}
&\Big\| \big\| \sup_\eta \frac{\frac{1}{|B^{N_0}_{x_0}|}\int_{B^{N_0}_{x_0}}\big(\Pi_{\tilde{y}}f(\tilde{y})-\Pi_xf(x)\big)(\eta_x^\delta) d\tilde{y}}{\delta^\gamma} \big\|_{L^p(dx)}\Big\|_{L^q_{n_0}(d\delta_x)}\\
&=\Big\| \big\| \sup_\eta \frac{\frac{1}{|B^{N_0}_{x_0}|}\int_{B^{N_0}_{x_0}}\Pi_x\big(\Gamma_{x,\tilde{y}}f(\tilde{y})-f(x)\big)(\eta_x^\delta) d\tilde{y}}{\delta^\gamma} \big\|_{L^p(dx)}\Big\|_{L^q_{n_0}(d\delta_x)}\\
&=\Big\| \big\| \sup_\eta \frac{\frac{1}{|B^{N_0}_{0}|}\int_{B^{N_0}_{0}}\Pi_x\big(\Gamma_{x,x+h}f(x+h)-f(x)\big)(\eta_x^\delta) d\tilde{h}}{\delta^\gamma} \big\|_{L^p(dx)}\Big\|_{L^q_{n_0}(d\delta_x)}\\
&\leq\ \|\Pi\|_\gamma \Big\| \big\|  \frac{1}{|B^{N_0}_{0}|}\int_{B^{N_0}_{0}}\sum_{\zeta<\gamma}\frac{\big|\Gamma_{x,x+h}f(x+h)-f(x)\big|_{\zeta}}{|h|_\mathfrak{s}^{\gamma-\zeta}} d\tilde{h} \big\|_{L^p(dx)}\Big\|_{L^q_{n_0}(d\delta_x)}\\
&\leq\|\Pi\|_\gamma\Big\|   \frac{1}{|B^{N_0}_{0}|}\int_{B^{N_0}_{0}}\sum_{\zeta<\gamma}\frac{\big\||\Gamma_{x,x+h}f(x+h)-f(x)\big|_{\zeta}\|_{L^p(dx)}}{|h|_\mathfrak{s}^{\gamma-\zeta}} d\tilde{h} \Big\|_{L^q_{n_0}(d\lambda_x)}\\
\end{align*}
Note that since for each $n_0$ we have $n_0\sim \delta \sim N_0$ we can take the $l^q$ norm and use Jensen's inequality to obtain the desired bound.

Note that we can rewrite and bound the first summand using \eqref{difference over diadics} and \eqref{bound on difference over diadic}
\begin{align*}
&|\sum_{n=N_0}^{\infty} \Big(\sum_{k'\in \Lambda_{n+1}\cap B^{N_0}_{x_0}}  \mathbb{1}^{n+1}_{k'}(\cdot)\Pi_{k'}\bar{f}^{n+1}(k')-\sum_{k\in \Lambda_{n}\cap B^{N_0}_{x_0}}\mathbb{1}^{n}_{k}(\cdot)\Pi_{k}\bar{f}^{n}(k)\Big)(\eta_x^\delta)|\\
&\leq\|\Pi\|_\gamma\sum_{n=N_0}^{\infty} \frac{2^{-nd}}{\delta^d}\sum_{k\in \Lambda_{n}\cap B_{N_0}(x)} \sum_{h\in \Lambda_{n+1}^C} \sum_{\zeta<\gamma} |\Gamma_{k,k+h}\bar{f}^{n+1}(k+h)-
\bar{f}^n(k)|_\zeta 2^{-n\zeta}
\end{align*}
The expression obtained by dividing by $\delta^\gamma$, taking the supremum over $\eta\in \mathcal{B}_r$, then taking the $L^p(dx)$ and $\L^q_{n_0}$ norm is thus given by 
\begin{align*}
&\|\Pi\|_\gamma\Big\| \big\|\sum_{n=N_0}^{\infty} \frac{2^{-nd}}{\delta^d}\sum_{k\in \Lambda_{n}\cap B_{N_0}(x)} \sum_{h\in \Lambda_{n+1}^C} \sum_{\zeta<\gamma} \frac{|\Gamma_{k,k+h}\bar{f}^{n+1}(k+h)-
\bar{f}^n(k)|_\zeta 2^{-n\zeta}}{\delta^\gamma}\big\|_{L^p(dx)}\Big\|_{L^q_{n_0}}\\
&\lesssim \|\Pi\|_\gamma\Big\| \big\|\sum_{n=N_0}^{\infty} \frac{2^{-nd}}{\delta^d}\sum_{k\in \Lambda_{n}\cap B_{N_0}(x)} \sum_{h\in \Lambda_{n+1}^C} \sum_{\zeta<\gamma} \frac{|\Gamma_{k,k+h}\bar{f}^{n+1}(k+h)-
\bar{f}^n(k)|_\zeta 2^{-n\zeta}}{2^{-n_0\gamma}}\big\|_{L^p(dx)}\Big\|_{L^q_{n_0}}\\
&\lesssim \|\Pi\|_\gamma\Big\| \big\|\sum_{n=N_0}^{\infty} \frac{2^{-nd}}{\delta^d}\sum_{k\in \Lambda_{n}\cap B_{N_0}(0)} \sum_{h\in \Lambda_{n+1}^C} \sum_{\zeta<\gamma} \frac{2^{(n_0-n)\gamma}|\Gamma_{x+k,x+k+h}\bar{f}^{n+1}(x+k+h)-
\bar{f}^n(x+k)|_\zeta }{2^{-n(\gamma-\zeta)}}\big\|_{L^p(dx)}\Big\|_{L^q_{n_0}}\\
&\lesssim \|\Pi\|_\gamma\Big\| \sum_{n=N_0}^{\infty} 2^{(n_0-n)\gamma}\sum_{h\in \Lambda_{n+1}^C} \sum_{\zeta<\gamma} \frac{\big\||\Gamma_{k,k+h}\bar{f}^{n+1}(k+h)-
\bar{f}^n(k)|_\zeta\big\|_{l_n^p(k)} }{2^{-n(\gamma-\zeta)}}\Big\|_{L^q_{n_0}}\ .
\end{align*}
Now using Jensen's inequality when taking the $l^q$ norm and applying Lemma \ref{average bounds} yields the desired bound \eqref{reconstruction cond}. The bound for two different models and modelled distributions \eqref{continuity bound} can be obtained by using the same arguments on \eqref{difference}.
The simple uniqueness argument is left to the reader\footnote{Alternativelly see Step 5 in the proof of \cite[Theorem 3.1]{Hairer2017}. }. 
\end{proof}

\subsection{The reconstruction theorem by extension from smooth models}

We shall now proof the reconstruction theorem in full generality. Let us start by recording the following 
\begin{prop}\label{last prop}
Let $\mathcal{T}=(A,T,G)$ be a regularity structure and $Z=(\Pi,\Gamma),\bar{Z}=(\bar{\Pi},\bar{\Gamma})\in \mathcal{M}_\mathcal{T}$ smooth models. Then the reconstruction operator satisfies the following inequalities for sufficiently small $\epsilon>0$.
\begin{equation}\label{reconstruction cond,epsilon}
\Bigg\| \Big\| \sup_{\eta\in \mathcal{B}^r} \frac{|\langle \mathcal{R}_Z f-\Pi_x f(x),\eta_x^{\delta}\rangle|}{\delta^{\gamma-\epsilon}} \Big\|_{L^p}\Bigg\|_{L^q_\delta} \lesssim \|f\|_{\mathcal{D}^{\gamma}_{p,q}} \|\Pi\|_{\gamma}(1+\|\Gamma\|_\gamma) 
\end{equation}
\begin{align}
\Bigg\| \Big\| \sup_{\eta\in \mathcal{B}^r} &\frac{|\langle \mathcal{R}_Z f-\Pi_x \bar{f}(x)-\mathcal{R}_{\bar{Z}}\bar{f}-\bar{\Pi}_x f(x),\eta_x^\delta\rangle|}{\delta^{\gamma-\epsilon}} \Big\|_{L^p}\Bigg\|_{L^q_\delta} \nonumber\\
\lesssim & \|f,\bar{f}\|_{\mathcal{D}^{\gamma}_{p,q}} \|\Pi\|_\gamma 
+\|\bar{f}\|_{\mathcal{D}^\gamma_{p,q}} \big(\|\Pi-\bar{\Pi}\|_\gamma (1+\|\Gamma\|_\gamma)+
\|\bar{\Pi}\|_\gamma \|\Gamma-\bar{\Gamma}\|_\gamma\big)\label{continuity bound,epsilon}
\end{align}
and
\begin{align}
\Bigg\| \Big\| \sup_{\eta\in \mathcal{B}^r} &\frac{|\langle \mathcal{R}_Z f-\Pi_x \bar{f}(x)-\mathcal{R}_{\bar{Z}}\bar{f}-\bar{\Pi}_x f(x),\eta_x^\delta\rangle|}{\delta^{\gamma-\epsilon}} \Big\|_{L^p}\Bigg\|_{L^q_\delta} \nonumber\\
\lesssim & \|f,\bar{f}\|_{\mathcal{D}^{\gamma-\epsilon}_{p,q}} \|\Pi\|_\gamma 
+\|\bar{f}\|_{\mathcal{D}^\gamma_{p,q}} \big(\|\Pi-\bar{\Pi}\|_{\gamma,\epsilon} (1+\|\Gamma\|_\gamma)+
\|\bar{\Pi}\|_\gamma \|\Gamma-\bar{\Gamma}\|_{\gamma,\epsilon}\big) \label{continuity bound,epsilon 2}
\end{align}
uniformly over all $f\in \mathcal{D}^\gamma_{p,q}, \bar{f}\in \bar{\mathcal{D}}^{\gamma}_{p,q}$ and all models $Z,\bar{Z}\in\mathcal{M}_\mathcal{T}$.
\end{prop}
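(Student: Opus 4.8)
The plan is to upgrade the clean bounds \eqref{reconstruction cond} and \eqref{continuity bound} for smooth models---already established in Proposition~\ref{prop_reconstruction_for_smooth_models}---to the $\epsilon$-weakened versions \eqref{reconstruction cond,epsilon}--\eqref{continuity bound,epsilon 2} by a direct interpolation/bookkeeping argument that gains a factor $\delta^\epsilon$ at small scales at the cost of passing from the $\mathcal{D}^\gamma_{p,q}$- to the $\mathcal{D}^{\gamma-\epsilon}_{p,q}$-norm (respectively from $\|\cdot\|_\gamma$ to $\|\cdot\|_{\gamma,\epsilon}$ for the model). The first two inequalities, \eqref{reconstruction cond,epsilon} and \eqref{continuity bound,epsilon}, are in fact \emph{weaker} than \eqref{reconstruction cond} and \eqref{continuity bound}: since $\delta<1$ one has $\delta^{-(\gamma-\epsilon)}\le \delta^{-\gamma}$, so \eqref{reconstruction cond,epsilon} follows immediately from \eqref{reconstruction cond} (and similarly \eqref{continuity bound,epsilon} from \eqref{continuity bound}, noting that the right-hand side of \eqref{continuity bound,epsilon} dominates that of \eqref{continuity bound} because $\|\Pi\|_\gamma(1+\|\Gamma\|_\gamma)\ge\|\Pi\|_\gamma$ only after one checks the first summand---actually one must keep the $(1+\|\Gamma\|_\gamma)$ factor, so I would simply state \eqref{continuity bound,epsilon} as a notational consequence of \eqref{continuity bound} with the harmless replacement of $\|\Pi\|_\gamma(1+\|\Gamma\|_\gamma)$ by $\|\Pi\|_\gamma$ only if the stated form literally has it; if not, it is verbatim weaker). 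The point of recording them is that they are the form in which the extension argument of the next subsection will be invoked.

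The substantive content is \eqref{continuity bound,epsilon 2}, where the model norms on the right are the \emph{weakened} seminorms $\|\cdot\|_{\gamma,\epsilon}$ of Definition~\ref{def_appropriate_seminorms}. Here I would retrace the proof of Proposition~\ref{prop_reconstruction_for_smooth_models} applied to the difference expression \eqref{difference}, i.e.\ to
\[
\Pi_x\big(f(x)-\Gamma_{x,y}f(y)-\bar f(x)+\bar\Gamma_{x,y}\bar f(y)\big)(y)+\big(\Pi_x-\bar\Pi_x\big)\big(\bar\Gamma_{x,y}\bar f(y)-\bar f(x)\big)(y),
\]
and observe that every place where a factor $\|\Pi\|_\gamma$ or $\|\bar\Pi-\Pi\|_\gamma$ or $\|\Gamma-\bar\Gamma\|_\gamma$ entered via the model bounds $|\Pi_x\tau(\varphi^\delta_x)|\lesssim\|\Pi\|_{x,\zeta}|\tau|\delta^\zeta$, it did so for $\tau$ in non-integer homogeneities $\zeta$; on the polynomial part the bounds are exact and identical for both families of seminorms. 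Thus replacing $\delta^\zeta$ by $\delta^{\zeta-\epsilon}$ on the non-polynomial components (which is permitted since $\delta<1$ and costs nothing, or rather is exactly what $\|\cdot\|_{x,\zeta-\epsilon}$ provides), and collecting a total deficit of at most $\delta^{-\epsilon}$ across the finitely many homogeneities $\zeta\in A\cap(-\infty,\gamma)$, one obtains the same chain of estimates with $\|\Pi-\bar\Pi\|_\gamma,\|\Gamma-\bar\Gamma\|_\gamma$ replaced by $\|\Pi-\bar\Pi\|_{\gamma,\epsilon},\|\Gamma-\bar\Gamma\|_{\gamma,\epsilon}$ and with an extra $\delta^{-\epsilon}$ that is absorbed into the left-hand side denominator $\delta^{\gamma-\epsilon}$. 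Concurrently, on the $\mathcal{D}$-side the factor $\|f,\bar f\|_{\mathcal{D}^\gamma_{p,q}}$ can be relaxed to $\|f,\bar f\|_{\mathcal{D}^{\gamma-\epsilon}_{p,q}}$ because the large-scale sum $\sum_{n\ge N_0}2^{(n_0-n)\gamma}(\cdots)$ in the proof of Proposition~\ref{prop_reconstruction_for_smooth_models} converges with room to spare: shrinking the decay exponent from $\gamma$ to $\gamma-\epsilon$ still leaves a geometric series, and Lemma~\ref{average bounds} with $\gamma$ replaced by $\gamma-\epsilon$ supplies exactly $\|f,\bar f\|_{\mathcal{D}^{\gamma-\epsilon}_{p,q}}$.

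I would organize the write-up as: (i) derive \eqref{reconstruction cond,epsilon} and \eqref{continuity bound,epsilon} in one line from \eqref{reconstruction cond}, \eqref{continuity bound} using $\delta<1$; (ii) for \eqref{continuity bound,epsilon 2}, state that one repeats the final part of the proof of Proposition~\ref{prop_reconstruction_for_smooth_models} verbatim on the two terms of \eqref{difference}, with the single modification that each application of the model bound on a non-integer homogeneity $\zeta$ is replaced by the bound at homogeneity $\zeta-\epsilon$ (legitimate because $\delta^{\zeta}\le \delta^{\zeta-\epsilon}$ and the latter is controlled by $\|\cdot\|_{\gamma,\epsilon}$), so that the second summand in \eqref{difference} contributes $\|\bar f\|_{\mathcal{D}^\gamma_{p,q}}\big(\|\Pi-\bar\Pi\|_{\gamma,\epsilon}(1+\|\Gamma\|_\gamma)+\|\bar\Pi\|_\gamma\|\Gamma-\bar\Gamma\|_{\gamma,\epsilon}\big)$ after summing over the finitely many $\zeta$; (iii) note that the first summand, run through Lemma~\ref{average bounds} at level $\gamma-\epsilon$ instead of $\gamma$, gives $\|f,\bar f\|_{\mathcal{D}^{\gamma-\epsilon}_{p,q}}\|\Pi\|_\gamma$, the geometric series $\sum_n 2^{(n_0-n)(\gamma-\epsilon)}$ still converging. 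The main obstacle, and the only place deserving genuine care, is checking that the large-scale telescoping sum in the proof of Proposition~\ref{prop_reconstruction_for_smooth_models} still converges and still terminates in an application of Lemma~\ref{average bounds} after the exponent is lowered by $\epsilon$ --- i.e.\ that $\epsilon$ can be taken strictly smaller than $\gamma$ and, as in Lemma~\ref{lem_convergence_mollified_models}, smaller than the gap between the non-integer elements of $A\cap(-\infty,\gamma)$ and the integers, so that the split of the seminorms in Definition~\ref{def_appropriate_seminorms} is consistent with the homogeneity-by-homogeneity estimates; everything else is routine bookkeeping identical to the smooth-model proof already given.
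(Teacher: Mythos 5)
Your proposal is essentially the same argument as the paper's, which disposes of the matter in two sentences: \eqref{reconstruction cond,epsilon} and \eqref{continuity bound,epsilon} are called ``obvious consequences'' of Proposition~\ref{prop_reconstruction_for_smooth_models}, and \eqref{continuity bound,epsilon 2} is said to follow for $\epsilon<\operatorname{dist}(\gamma,A\setminus\{\gamma\})$ ``by a verbatim adaptation of the proof of the same Proposition.'' What you have written is a spelling-out of what that verbatim adaptation involves: weaken the model seminorms only at non-integer homogeneities (so the $\|\cdot\|_{\gamma,\epsilon}$-seminorms apply), trade the resulting $\delta^{-\epsilon}$ against the $\delta^{\gamma-\epsilon}$ in the numerator, and run the large-scale dyadic telescope at exponent $\gamma-\epsilon$ rather than $\gamma$, using Lemma~\ref{average bounds} at the lowered exponent for the $\|f,\bar f\|_{\mathcal{D}^{\gamma-\epsilon}_{p,q}}$ term --- all of which converges for $\epsilon$ small. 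Two minor comments. First, be explicit that the $\mathcal{D}$-weakening and the $\Pi$-weakening land on \emph{different} summands of \eqref{difference}: you weaken the $\mathcal{D}$-norm in the term $\Pi_x(f(x)-\Gamma_{x,y}f(y)-\bar f(x)+\bar\Gamma_{x,y}\bar f(y))(y)$ and the model seminorm in the term $(\Pi_x-\bar\Pi_x)(\bar\Gamma_{x,y}\bar f(y)-\bar f(x))(y)$; you cannot spend the single $\delta^{-\epsilon}$ allowance twice on the same term. Second, your flag about the $(1+\|\Gamma\|_\gamma)$ factor is legitimate: as written, the first summand on the right-hand side of \eqref{continuity bound,epsilon} is $\|f,\bar f\|_{\mathcal{D}^\gamma_{p,q}}\|\Pi\|_\gamma$ without the $(1+\|\Gamma\|_\gamma)$, so it does \emph{not} literally follow from \eqref{continuity bound} by dropping $\delta^\epsilon$ on the left; either the paper intends that factor to be present (a typo), or one should simply carry $(1+\|\Gamma\|_\gamma)$ through, which is harmless since it only enlarges the bound and does not affect the extension argument that uses this proposition. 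Your choice of smallness condition on $\epsilon$ (the $A\setminus\mathbb{N}$-vs-$\mathbb{N}$ gap from Lemma~\ref{lem_convergence_mollified_models}) is complementary to the paper's (the gap between $\gamma$ and $A\setminus\{\gamma\}$, which guarantees $A\cap(-\infty,\gamma-\epsilon)=A\cap(-\infty,\gamma)$); both conditions should be imposed, and either could reasonably be emphasized as ``sufficiently small $\epsilon$.''
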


\begin{proof}
Estimates \eqref{reconstruction cond,epsilon} and \eqref{continuity bound,epsilon} are obvious consequences of Proposition \ref{prop_reconstruction_for_smooth_models}. Estimate \eqref{continuity bound,epsilon 2} follows for $\epsilon< \text{dist}(\gamma, A\setminus \{\gamma\})$ by an a verbatim adaptation of the proof of the same Proposition.
\end{proof}

Now the following "soft" argument based on Proposition \ref{last prop} concludes the proof of the reconstruction theorem in full generality.
\begin{proof}
Let $Z$ be a model and $ f \in \mathcal{D}^\gamma_{p,q} $ a modelled distribution, then we can construct families $ Z^\delta $ of smooth models by Theorem~\ref{thm_approximating_smooth_models} and a family $ f^\lambda $ of modelled distributions with respect to $ Z^\lambda $ such that
\[
{\| Z^\lambda - Z \|}_{\gamma,\epsilon} \lesssim \lambda^\epsilon  
\]
and
\[
{\| f^\lambda - f \|}_{\mathcal{D}^\gamma_{p,q}} \rightarrow 0
\]
as $ \lambda \to 0 $. The second estimate follows from Lemma \ref{lem_density} by approximating $ f $ with elements from $ \mathcal{E}^\gamma_{p,q} $ and then by changing the model from $ Z $ to $ Z^\lambda $. Both estimates together with Equation \eqref{continuity bound,epsilon 2} yield Equation\eqref{reconstruction cond,epsilon} for the (not necessarily smooth) model $Z$. Now letting $\epsilon\to 0$ gives the reconstruction bound \eqref{reconstruction cond} of Theorem~\ref{thm_reconstruction}. The Bound \eqref{continuity bound} follows similarly, by using \eqref{continuity bound,epsilon} instead of \eqref{reconstruction cond,epsilon}.
\end{proof}

\begin{remark}
At this point it appears that several deep facts of regularity structures lie in its splendid definitions which allow in a seamless way to translate "obvious" statements for smooth models to the closure of smooth models with respect to a well chosen topology.
\end{remark}

\section{Consequences for Rough Path theory}
In this section we illustrate the above constructions in the setting of branched rough path theory. Of course an analogous discussion applies for geometric rough paths. We shall use the notation in \cite{HaiKel:15}.
\subsection{Recall on branched rough path theory}
Denoting by $\mathcal{H}$ the Connes Kreimer algebra with nodes indexed by $\{1,...,d\}$ and for $\tau \in \mathcal{H}$ by $|\tau|$ the number of nodes of $\tau$. 

The following is essentially \cite[Definition 2.13]{HaiKel:15}.
\begin{definition}\label{def branched rough path}
For $\gamma\in (0,1)$ fix $N$ to be the largest integer, such $\gamma N\leq 1$. A $\gamma$-H\"older branched rough path is a map $\mathbf{X}:\mathbb{R}\to G_N(\mathcal{H})$ satisfying the bound
\begin{equation}\label{rough path bound}
\sup_{|t-s|<1}\frac{|\langle \mathbf{X}_{s,t},\tau\rangle|}{|t-s|^{\gamma |\tau|}}<+\infty
\end{equation}
for every forest $\tau\in \mathcal{F}_N$, where we write $\mathbf{X}_{s,t}:=\mathbf{X}_{s}^{-1}\star\mathbf{X}_{t}$ and $|\tau|$ denotes the number of nodes of $\tau$. 
We call a branched rough path $\mathbf{X}:\mathbb{R}\to G_N(\mathcal{H})$ normalized, if  $\mathbf{X}_0=\epsilon$ is the unit in $G_N$. We denote by $\mathcal{C}_{br}^{\gamma}$ the space of all normalized branched rough paths and equip it with the topology induced by the (inhomogeneous) metric 
\begin{equation}\label{rough path metric}
\| \mathbf{X}-\mathbf{X}\|=\sup_{\tau\in \mathcal{F}_N}\sup_{|s-t|< 1}\frac{|\langle \mathbf{X}_{s,t}-\mathbf{X}_{s,t},\tau\rangle|}{|t-s|^{\gamma |\tau|}}.
\end{equation}
For a path $X:\mathbb{R}\to \mathbb{R}^d$, we say $\mathbf{X}$ is a branched rough path above $X$ if $\langle \mathbf{X}_{s,t}, [1]_j\rangle=X_j(t)-X_j(s)$ for all $j\in \{1,...,d\}.$
\end{definition}
The following is essentially \cite[Definition 3.2]{HaiKel:15}
\begin{definition}\label{def controlled rough path}
Suppose $\mathbf{X}$ is $\gamma$-H\"older rough path and let $N\in \mathbb{N}$ be the largest integer such that $N\gamma\leq 1$. An $\mathbf{X}$-controlled rough path is a bounded path $\mathbf{Z}:\mathbb{R}\to \mathcal{H}_{N-1}$ satisfying
$$\sup_{|t-s|<1}\frac{|\langle \tau,\mathbf{Z}_t\rangle-\langle \mathbf{X}_{s,t} \star \tau,\mathbf{Z}_s\rangle |}{|t-s|^{(N-|h|)\gamma}} <+\infty$$
for all forests $\tau\in \mathcal{F}_{N-1}$. 

We equip the space of all $\mathbf{X}$-controlled rough paths with the norm
$$\|\mathbf{Z}\|= \sup_{\tau\in \mathcal{F}_{N-1}} \sup_{t\in \mathbb{R}} |\langle \tau, \mathbf{Z}_t\rangle+\sup_{s,t\in\mathbb{R}:|t-s|<1}\frac{|\langle \tau,\mathbf{Z}_t\rangle-\langle \mathbf{X}_{s,t} \star \tau,\mathbf{Z}_s\rangle |}{|t-s|^{(N-|h|)\gamma}}. $$
For a path $Z:\mathbb{R}\to \mathbb{R}$, we call $\mathbf{Z}$ a controlled path above $Z$, if $\langle 1,\mathbf{Z}\rangle=Z$.
\end{definition}

Lastly let us recall that the main theorem, which makes (branched) rough path theory functional is the existence of the rough integral. We can construct the rough integral for a branched rough paths $\mathbf{X}\in\mathcal{C}^\gamma_{br}$ and an $\mathbf{X}$-controlled rough path $\mathbf{Z}$ the rough integral can be defined as 
\begin{equation}\label{equation rough integral}
\big(\int_s^t Z_u \, d\mathbf{X}_u\big)_j:=\lim_{|\mathcal{P}(s,t)|\to 0} \sum_{[u,v]\in\mathcal{P}^n(s,t)} \tilde{Z}^j_{u,v} \, ,
\end{equation}
where 
\begin{equation}\label{equation rough integral2}
\tilde{Z}^j_{u,v}:=\sum_{\tau\in \mathcal{F}_{N-1}} \langle \tau, \mathbf{Z}_s\rangle\langle \mathbf{X}_{s,t},[\tau]_j\rangle.
\end{equation}  We refer to \cite[Section 3.1]{HaiKel:15} and \cite{Gubinelli2011} for more details.

\subsection{The branched rough path regularity structure}
Next we recall the construction of the branched rough path regularity structure, see~\cite[Section 4.4]{Hairer2014} for similar considerations.
\begin{enumerate}
\item We let $A^{br}=\alpha\cdot \{0,1,...,N\}$ and declare a forest $\tau\in \mathcal{H}$ with $n$ nodes to have homogeneity $\alpha n$. 
\item Then we set $T^{br}=\mathcal{H}_N$.
\item The structure group $G^{br}$ is given by all linear maps which can be written as $(g^{-1}\otimes \id)\triangle$ for some $g$ a multiplicative functional on $\mathcal{H}$, where $\triangle$ denotes the Connes Kreimer coproduct.
\end{enumerate}
One has the following well known result, which is "essentially" contained in \cite[Section 4.4]{Hairer2014}. See also the master thesis of the first author for a detailed proof.

\begin{theorem}\label{controlled paths are modelled dist}
For the choice $\alpha=\gamma\in (0,1)$, the space $\mathcal{M}^1_{br}$ of models for $\mathcal{T}^{br}=(A^{br},T^{br},G^{br})$ satisfying $\Pi \mathbf{1}=1$ is isomorphic to the space of normalized branched rough paths $\mathcal{C}^\gamma_{br}$ in the sense that the following map is bijective and bi-continuous:
$$\mathcal{C}^\gamma_{br} \to \mathcal{M}^1_{br},\ \ \mathbf{X}\mapsto Z^{\mathbf{X}}=(\Pi^\mathbf{X},\Gamma^\mathbf{X})$$
where $$\Pi_s^\mathbf{X}(\tau)(t):=\langle\mathbf{X}_{s,t},\tau\rangle,$$ $$\Gamma_{s,t}^\mathbf{X}\tau:=(\mathbf{X}_{t,s}\otimes \id)\triangle\tau.$$

Furthermore, if we fix a rough path $\mathbf{X}$ and model $Z=(\Pi^\mathbf{X},\Gamma^\mathbf{X})$, then the space of $\mathbf{X}$-controlled paths is isomorphic to the space $\mathcal{D}_{\infty,\infty}^{N\gamma}$ for $N=\max \{n\in \mathbb{N} \ | \ n\gamma\leq 1\}$. The isomorphism of normed vector spaces is given by the identity map.
\end{theorem}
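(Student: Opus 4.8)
The plan is to verify the two asserted isomorphisms separately, treating the model/rough-path correspondence first and then the modelled-distribution/controlled-path correspondence. For the first part, I would begin by checking that the map $\mathbf{X}\mapsto Z^{\mathbf{X}}=(\Pi^{\mathbf{X}},\Gamma^{\mathbf{X}})$ is well defined, i.e.\ that $Z^{\mathbf{X}}$ satisfies the algebraic and analytic axioms of a model. The algebraic identities $\Pi_x\Gamma_{x,y}=\Pi_y$ and $\Gamma_{x,y}\Gamma_{y,z}=\Gamma_{x,z}$ should follow from Chen's relation $\mathbf{X}_{s,t}=\mathbf{X}_{s,u}\star\mathbf{X}_{u,t}$ together with the coassociativity of the Connes--Kreimer coproduct $\triangle$ and the fact that $(g\otimes\id)\triangle$ defines a left action; here the only genuine bookkeeping is keeping straight the convention $\Gamma^{\mathbf{X}}_{s,t}\tau=(\mathbf{X}_{t,s}\otimes\id)\triangle\tau$ (note the reversal) so that the composition law comes out right. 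For the analytic bounds, the estimate $|\langle\Pi^{\mathbf{X}}_s\tau,\varphi^{\delta}_s\rangle|\lesssim\delta^{\alpha|\tau|}|\tau|$ is immediate from the Hölder bound \eqref{rough path bound} since $\langle\Pi^{\mathbf{X}}_s\tau,\varphi^\delta_s\rangle=\int\langle\mathbf{X}_{s,t},\tau\rangle\varphi^\delta_s(t)\,dt$ and $|t-s|\lesssim\delta$ on the support; the $\Gamma$-bound $|\Gamma^{\mathbf{X}}_{s,t}\tau|_{\beta}\lesssim|t-s|^{\alpha(|\tau|-|\sigma|)}$ for the component in $\mathcal{H}_{\text{with }|\sigma|\text{ nodes}}$ follows because each term produced by the coproduct pairs a subforest of $\mathbf{X}_{t,s}$ (of $|\tau|-|\sigma|$ nodes) against the grading, again using \eqref{rough path bound}. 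Conversely, given a model $Z=(\Pi,\Gamma)$ with $\Pi\mathbf{1}=1$ one recovers $\mathbf{X}$ by setting $\langle\mathbf{X}_{s,t},\tau\rangle:=(\Pi_s\tau)(t)$ — legitimate since the model is continuous here (all $\tau$ have positive or zero homogeneity, and $\Pi_s\tau$ is continuous because of the reconstruction/continuity structure for $\gamma>0$), so pointwise evaluation makes sense — and one checks that $\mathbf{X}_s:=\mathbf{X}_{0,s}$ is group-valued and that the two maps are mutually inverse. Bi-continuity is then a direct comparison of the defining seminorms: $\|\mathbf{X}-\bar{\mathbf{X}}\|$ in \eqref{rough path metric} is comparable to $\|Z^{\mathbf{X}}-Z^{\bar{\mathbf{X}}}\|_\gamma=\|\Pi^{\mathbf{X}}-\Pi^{\bar{\mathbf{X}}}\|_\gamma+\|\Gamma^{\mathbf{X}}-\Gamma^{\bar{\mathbf{X}}}\|_\gamma$, with the $\Gamma$-part controlled by the $\Pi$-part via the coproduct expansion.

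For the second part I would fix $\mathbf{X}$ and the associated model $Z=(\Pi^{\mathbf{X}},\Gamma^{\mathbf{X}})$ and show that the identity map identifies $\mathbf{X}$-controlled rough paths (Definition~\ref{def controlled rough path}) with $\mathcal{D}^{N\gamma}_{\infty,\infty}$. The key observation is that $T^{br}_{<N\gamma}=\mathcal{H}_{N-1}$ (since a forest with $k$ nodes has homogeneity $k\gamma$, and $k\gamma<N\gamma\iff k\le N-1$), so a map $f:\mathbb{R}\to T_{<N\gamma}$ is exactly a path $\mathbf{Z}:\mathbb{R}\to\mathcal{H}_{N-1}$. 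One then has to match the increment conditions: the modelled-distribution bound requires, for each forest $\tau$ with $|\tau|=k\le N-1$, that $|f(t)-\Gamma^{\mathbf{X}}_{t,s}f(s)|_{k\gamma}\lesssim|t-s|^{N\gamma-k\gamma}$, and since the coefficient of $\tau$ in $\Gamma^{\mathbf{X}}_{t,s}f(s)$ is obtained by pairing $\mathbf{X}_{s,t}$ against the relevant part of $\triangle$ and reading off the $\tau$-component, this is precisely the dual form of $|\langle\tau,\mathbf{Z}_t\rangle-\langle\mathbf{X}_{s,t}\star\tau,\mathbf{Z}_s\rangle|\lesssim|t-s|^{(N-k)\gamma}$ appearing in Definition~\ref{def controlled rough path}. (Here I would unwind the pairing $\langle\mathbf{X}_{s,t}\star\tau,\mathbf{Z}_s\rangle=\langle\tau,(\mathbf{X}_{s,t}\otimes\id)\triangle\mathbf{Z}_s\rangle$ carefully, using that $\star$ on $\mathcal{H}^*$ is dual to $\triangle$, to confirm the coefficient-by-coefficient correspondence.) The boundedness parts of the two norms match trivially. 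Hence the norms are literally equal and the identity map is the claimed isometric isomorphism.

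The main obstacle I anticipate is not any single estimate but the combinatorial/notational reconciliation of conventions: the reversal in $\Gamma^{\mathbf{X}}_{s,t}\tau=(\mathbf{X}_{t,s}\otimes\id)\triangle\tau$ versus the order in Chen's relation, the distinction between the (co)product $\triangle$ on $\mathcal{H}$ and its dual $\star$ on $\mathcal{H}^*=G_N$, and the grading convention $|\tau|=$ number of nodes vs.\ homogeneity $\alpha|\tau|$, all have to be aligned so that "the isomorphism is the identity map" is literally true and not merely true up to a relabelling. Once the dictionary
\[
\langle\mathbf{X}_{s,t},\tau\rangle=(\Pi^{\mathbf{X}}_s\tau)(t),\qquad \langle\mathbf{X}_{s,t}\star\tau,\mathbf{Z}_s\rangle=\text{(}\tau\text{-component of }\Gamma^{\mathbf{X}}_{t,s}f(s)\text{)}
\]
is pinned down, everything reduces to the comparisons of seminorms indicated above, which are routine. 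I would also remark in passing (or relegate to the cited master's thesis) the verification that $G^{br}$, defined via $(g^{-1}\otimes\id)\triangle$, does act canonically on the polynomial substructure in the sense required by Assumption~\ref{ass_polynomial_reg_structure}, although for the present statement — where $\Pi\mathbf{1}=1$ is imposed and $\gamma\in(0,1)$ so no nontrivial polynomial component enters — this is essentially vacuous.
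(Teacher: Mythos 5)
The paper does not actually prove this statement: Theorem~\ref{controlled paths are modelled dist} is introduced as ``essentially contained in'' Section~4.4 of \cite{Hairer2014}, with a detailed proof delegated to the first author's master thesis. So there is no in-paper proof to compare your argument against; I can only assess your proposal on its own terms.

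Your outline is the natural and, as far as I can tell, correct route: forward map well-defined (Chen's relation and coassociativity give the algebraic model identities, the H\"older bound \eqref{rough path bound} and gradedness of $\triangle$ give the analytic bounds), inverse map via pointwise evaluation, bi-continuity by seminorm comparison, and then term-by-term matching of the controlled-path remainder with the modelled-distribution remainder. You also flag, correctly, the genuinely tricky part: reconciling the order-reversal in $\Gamma^{\mathbf{X}}_{s,t}\tau=(\mathbf{X}_{t,s}\otimes\id)\triangle\tau$ with Chen's relation, and matching $\star$ on $\mathcal{H}^*$ against $\triangle$ on $\mathcal{H}$ so that the identity map really is the isomorphism. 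Two points where your write-up is thinner than it should be. First, when you construct the inverse by setting $\langle\mathbf{X}_{s,t},\tau\rangle:=(\Pi_s\tau)(t)$, you need to argue that this candidate is actually \emph{multiplicative} (a character), i.e.\ lands in $G_N(\mathcal{H})$ and not merely in $\mathcal{H}^*$; this is not automatic from the model axioms alone but follows because every $\Gamma\in G^{br}$ has the form $(g^{-1}\otimes\id)\triangle$ for a character $g$, and then $(\Pi_s\tau)(t)=\langle g_{t,s}^{-1},\tau\rangle$ using $\Pi\mathbf{1}=1$ and the counit term $\tau\otimes\mathbf{1}$ in $\triangle\tau$, so multiplicativity of $\mathbf{X}_{s,t}$ is inherited from multiplicativity of $g_{t,s}^{-1}$. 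Saying only ``one checks that $\mathbf{X}_s$ is group-valued'' hides the one place where the specific shape of $G^{br}$ is essential to bijectivity. Second, your claim that the second isomorphism is \emph{isometric} overshoots: the controlled-path norm takes a supremum over forests while the $\mathcal{D}^{N\gamma}_{\infty,\infty}$ norm sums over homogeneities $\zeta$; these give equivalent, not equal, norms, which is all the theorem asserts (``isomorphism of normed vector spaces'' via the identity map). Neither of these affects the validity of the approach, but both should be made explicit in a complete write-up.
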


This theorem can now be used to translate all the results of this paper into the rough path setting, we illustrate this with some examples\footnote{Note that the case $N\gamma=1$ is not treated due to Assumption \ref{ass_polynomial_reg_structure}, though this restriction can be circumvented.}
\begin{enumerate}
\item\label{1} Using the approximation procedure in Section \ref{subsec mollifying} yields the following non-standard smooth approximation for rough paths:
\begin{align*}\label{mollified rough path}
\langle \mathbf{X}^\epsilon_{s,t}, \tau\rangle= \big( \phi^\epsilon*\langle \mathbf{X}_{s,\cdot} , \tau \rangle\big)(t)-\big( \phi^\epsilon*\langle \mathbf{X}_{s,\cdot} , \tau \rangle\big)(s)=\big( \phi^\epsilon*\langle \mathbf{X}_{s,\cdot} , \tau \rangle\big)_{s,t}\ , 
\end{align*}
for non empty trees $\tau$.
It is straight-forward to reformulate the topologies introduced for models in Section \ref{subsec topologies} to topologies for rough paths in order to obtain appropriate convergence results directly.
\item\label{2} Given an $\mathbf{X}$-controlled rough path $\mathbf{Z}$, the mollification of modelled distributions introduced in Remark \ref{Remakr mollifying modelled} gives the following approximating sequence of $\mathbf{X}^\epsilon$-controlled rough paths
$$\mathbf{Z}^\epsilon(t)= \mathbf{Z}(t)+\mathbf{1}\big(\phi^\epsilon\star Z\big)(t) \ ,$$
where $\mathbf{1}$ denotes the empty forest.
\item\label{3} The density statement in Lemma~\ref{lem_density} gives a dense subset of the space of controlled rough paths "independent" of the rough path they are controlled by.
\item If we apply the first two observations \eqref{1} and \eqref{2} in combination with the definition of the rough integral \eqref{equation rough integral} one obtains the following nonstandard approximation for the rough integral. 
$$\big(\int_s^t \mathbf{Z} \, d\mathbf{X}^\epsilon\big)_j=\lim_{\epsilon\to 0} \big(\int_s^t \mathbf{Z}^\epsilon \, d\mathbf{X}^\epsilon\big)_j\ ,$$
where 
\begin{align*}
\big(\int_s^t \mathbf{Z}^\epsilon \, d\mathbf{X}^\epsilon\big)_j:=
&\lim_{|\mathcal{P}(s,t)|\to 0} \sum_{[u,v]\in\mathcal{P}^n(s,t)} \sum_{\tau\in \mathcal{F}_{N-1}}\langle \tau, \mathbf{Z}_u^\epsilon\rangle\langle \mathbf{X}^\epsilon_{u,v}[\tau]_j\rangle\ .
\end{align*}
We know from general principles that the latter converges to the process
$$\int \phi^\epsilon(\delta) \big(\int_{s-\delta}^{t-\delta} \mathbf{Z}_u \, d\mathbf{X}_u\big)_j d\delta \, .$$
\item Alternatively one could of course conbine \eqref{1}, \eqref{3} and \eqref{equation rough integral} to construct other nonstandard approximations.

\item Recall the fact that the reconstruction theorem gives an easy proof of the existence of the rough integral, cf ~\cite[Lemma 3.2]{Hai14a}. By mimicking this construction for smooth paths and translating our proof of Theorem \ref{thm_reconstruction} into that framework one obtains a different proof of the existence of the rough integral.
\end{enumerate}

\newpage

\section{Appendix}
\subsection{Proof of Lemma \ref{lem_convergence_mollified_models}}
\begin{proof}
Note that we have to bound only $\|\Pi-\Pi^{\lambda}\|_{x,\zeta-\epsilon}$ and $\|\Gamma-\Gamma^{\lambda}\|_{x,y,\zeta-\epsilon}$, since the expressions in $\|Z,Z^\lambda\|_{\gamma,\epsilon}$ corresponding to monomials (i.e.~$\zeta\in \mathbb{N}$) vanish by definition. We start with the necessary bound on $$\|\Pi-\Pi^{\lambda}\|_{x,\zeta-\epsilon}:=\sup_{\varphi\in \mathfrak{B}^r} \sup_{\tau \in T_\zeta, |\tau|_\zeta=1 } \sup_{\delta<1} \frac{|\langle  \Pi_x-\Pi_x^{\lambda}\tau, \varphi^\delta_x \rangle |}{ \delta^{\zeta-\epsilon}} \, .$$
We bound the expression $\frac{|\langle  \Pi_x-\Pi_x^{\lambda}\tau, \varphi^\delta_x \rangle |}{ \delta^{\zeta-\epsilon}}$ by distinguishing between two cases:
\begin{enumerate}
\item If $\delta<\lambda$ we easily estimate:
$$\frac{|\langle  \Pi_x-\Pi_x^{\lambda}\tau, \varphi^\delta_x \rangle |}{ \delta^{\zeta-\epsilon}}\leq \|\Pi-\Pi^\lambda\|_{x,\zeta} \delta^\epsilon <\|\Pi-\Pi^\lambda\|_{x,\zeta} \lambda^\epsilon \, .$$
\item The case $\delta\geq\lambda$ needs slightly more work:
\begin{itemize}
\item We start with the case $\tau\in T_\zeta$ for  $\zeta<0$: we estimate each term of 
$$|(\Pi_x-\Pi^{\lambda}_x)(\varphi^{\delta})| \leq \sum_{n=0}^\infty |(\Pi_x^{\lambda/2^{n+1}}-\Pi^{{\lambda/2^n}}_x)(\varphi^{\delta})|$$ separately. Note that 
\begin{align*}
\big(\Pi_x^{\lambda/2^{n+1}}-\Pi^{{\lambda/2^n}}_x\big)(\varphi^{\delta})=&(\Pi_x)(\phi^{\lambda/2^{n+1}}-\phi^{\lambda/2^n})*(\varphi^{\delta})\\
=&\big(\varphi^{\delta}*\Pi_x\big)(\phi_x^{\lambda/2^{n+1}}-\phi_x^{\lambda/2^n}) \, .
\end{align*}
Now we use \eqref{pointwise estimate} to bound
$$|(\Pi_x-\Pi^{\lambda}_x)(\varphi^{\delta})|\lesssim \|\Pi_x\|_\zeta \sum_{\alpha<\zeta} \delta^{\alpha}\big(\frac{\lambda}{2^n}\big)^{\zeta-\alpha}\, .$$
Summing over $n$ and dividing by $\delta^{\zeta-\epsilon}$ we obtain:
$$\frac{|(\Pi_x-\Pi^{\lambda}\tau_x)(\varphi^{\delta})|}{\delta^{\zeta-\epsilon}} \lesssim \|\Pi_x\|_\zeta \sum_{\alpha<\zeta} \delta^{\alpha-\zeta+\epsilon}\lambda^{\zeta-\alpha}\lesssim \|\Pi\|_{x,\zeta} \lambda^\epsilon \, .$$
\item In the case $\zeta>0$ we write 
$$\Pi_x-\Pi_x^{\lambda}\tau =\Big(\Pi_x\tau-\phi^{\lambda}*\Pi_x\tau\big)+\Pi_x J(x)\tau \, .$$
The first summand can be estimated as in the case $\zeta<0$, while for the second summand we have:
$$|\Pi_x J(x)\tau(\varphi^\delta)|\lesssim  \|\Pi\|_{x,\zeta} \sum_{|k|<\zeta} \delta^{|k|} \lambda^{\zeta-|k|}\lesssim  \|\Pi\|_{x,\zeta} \delta^{\zeta-\epsilon}\lambda^\epsilon\, . $$
\end{itemize} 
\end{enumerate}
It remains to find an appropriate bound on $\|\Gamma-\Gamma^{\lambda}\|_{x,y,\zeta-\epsilon}$. Note that $\Gamma^{\lambda}-\Gamma= J(x)\Gamma_{x,y}-\Gamma_{x,y}J(y)$, which vanishes on $\tau\in T_{\zeta}$ for $\zeta<0$ and $\zeta\in\mathbb{N}$. Thus we have to bound 
$\frac{|\Gamma_{x,y}\tau|_m}{\|x-y\|_s^{\zeta-m}|\tau|}$
in the remaining case, where we again distinguish two cases:
\begin{enumerate}
\item the case $|x-y|<\lambda$: here it follows directly from \eqref{bound on Gamma} that
$$\frac{ | J(x)\Gamma_{x,y}-\Gamma_{x,y}J(y)|_m}{|x-y|^{\zeta-m-\epsilon}}\lesssim \|\Pi\|_{x,\zeta} |x-y|^\epsilon\lesssim \|\Pi\|_{x,\zeta} \lambda^\epsilon \, .$$
\item in the case $\lambda<|x-y|$ we bound $|J(x)\Gamma_{x,y}|_m$ and $|\Gamma_{x,y}J(y)|_m$ separately, which follows directly from Equation \eqref{second gamma bound}, respectively Equation \eqref{third gamma bound}.
\end{enumerate}
\end{proof}
\subsection{Proof of Lemma \ref{lem_density}}
Let $\mathbb{1}$ be a smooth function with compact support, such that ${(\mathbb{1}^n_k})_{k\in\Lambda_n}$,
where we have used the same notation as in Section \ref{subsec reconstruct smooth} \eqref{notation}, is a smooth partition of unity adapted to cubes $B^n_k$. 
\begin{proof}
We prove the claim in the case $p,q<\infty$. For $y\in\mathbb{R}^d$, denote by $\mathbf{1}_y^n$ the canonical lift of $\mathbb{1}^n_y$. Given modelled distribution $f \in \mathcal{D}^\gamma_{p,q} $ we approximate it locally by the modelled distributions
\[
x \mapsto f^n(x)=\int_y \mathbf{1}^n_y(x) \star\Gamma_{x,y} f(y) \, .
\]
Of course these modelled distributions can be approximated by "localized constants".
First we calculate:
\begin{align*}
Q_\alpha\big( f^n (x) -f(x)\big)=&Q_{\alpha} \int_{y} \mathbf{1}^n_y(x)\star\big(\Gamma_{x,y} f(y) -f(x)\big)\\
=&\int_{y} \sum_{ m+\beta=\alpha} Q_m \big(\mathbf{1}^n_y(x)\big) \star Q_\beta \big(\Gamma_{x,y} f(y) -f(x)\big),\\
=&\int_{y} \sum_{ m+\beta=\alpha} Q_m \big(\mathbf{1}^n_{x-y}(x)\big) \star Q_\beta \big(\Gamma_{x,x-y} f(x-y) -f(x)\big) \, ,
\end{align*}
where $m$ runs over $\mathbb{N}$ and $\beta$ over $A$. From this the estimate
\begin{align*}
\| |f^n (x) -f(x)|_\alpha \|_{L^p}&\leq \int_{y}  \sum_{ m+\beta=\alpha}  \| \big|\mathbf{1}^n_{x-y}(x)\big|_m \  \big|\Gamma_{x,x-y} f(x-y) -f(x)\big|_\beta\|_{\L^p(dx)}\\
&= \int_{y}  \sum_{ m+\beta=\alpha}  \| \big|\mathbf{1}^n_{x}(x+y)\big|_m \  \big|\Gamma_{x+y,x} f(x) -f(x+y)\big|_\beta\|_{\L^p(dx)}\\
&\leq \int_{y}  \sum_{ m+\beta=\alpha}  \| \big|\mathbf{1}^n_{0}(y) \big|_m|y|^{\gamma-\beta+|\mathfrak{s}|} \  \frac{\big|\Gamma_{x+y,x} f(x) -f(x+y)\big|_\beta}{|y|^{\gamma-\beta+|\mathfrak{s}|}}\|_{\L^p(dx)}\\
&\lesssim \sum_{ m+\beta=\alpha} \| \big|\mathbf{1}^n_{0}(y) \big|_m|y|^{\gamma-\beta+|\mathfrak{s}|}\|_{L^{p'}(dy)} \|f\|_{d^{\gamma,\beta}_{p,q}}\\
&\lesssim \sum_{ m+\beta=\alpha} 2^{nm} 2^{-n(\gamma-\beta+|\mathfrak{s}|)} \|f\|_{d^{\gamma,\beta}_{p,q}}\\
&= \sum_{m+\beta=\alpha} 2^{-(\gamma-\alpha+|\mathfrak{s}|)} \|f\|_{d^{\gamma,\beta}_{p,q}} \, .\\
\end{align*}
Next we establish the appropriate bound on 
\begin{align*}
f^n (x+h)-&\Gamma_{x+h,x} f^n(x) -\big(f(x+h)-\Gamma_{x+h,x} f(x) \big)\\
&=f^n (x+h) -f(x+h)+\Gamma_{x+h,x}\big( f(x)-f^n (x)  \big)\\
&=\int_{y} \mathbf{1}^n_y(x+h)\star\big(\Gamma_{x+h,y} f(y) -f(x+h)\big)-\Gamma_{x+h,x}\int_{y} \mathbf{1}^n_y(x)\star\big(\Gamma_{x,y} f(y) -f(x)\big)\\
&=\int_{y} {\mathbf{1}^n_y(x+h)}\star\big(\Gamma_{x+h,y} f(y) -f(x+h)\big)-\int_{y} \big(\Gamma_{x+h,x}\mathbf{1}^n_y(x)\big)\star \big(\Gamma_{x+h,y} f(y) -\Gamma_{x+h,x}f(x)\big)\\
&=\int_{y} \big({\mathbf{1}^n_y(x+h)-\Gamma_{x+h,x}\mathbf{1}^n_y(x)}\big)\star\big(\Gamma_{x+h,y} f(y) -f(x+h)\big) + \\
+\int_y&\big(\Gamma_{x+h,x}\mathbf{1}^n_y(x)\big)\star\big(\underbrace{\Gamma_{x+h,y} f(y) -f(x+h)-\big(\Gamma_{x+h,y} f(y) -\Gamma_{x+h,x}f(x)}_{=\Gamma_{x+h,x}f(x)-f(x+h)}\big)\big) \, .
\end{align*}
We take care of the two summands separately: we start with the second, which is easier,
\begin{align*}
& \frac{|\int_y\big(\Gamma_{x+h,x}\mathbf{1}^n_y(x)\big)\star\big(\Gamma_{x+h,x}f(x)-f(x+h)\big)|_\alpha}{|h|^{\gamma-\alpha+|\mathfrak{s}|}_\mathfrak{s}} \\ \leq &\sum_{m+\beta=\alpha}\big|\Gamma_{x+h,x}\underbrace{\int_y\mathbf{1}^n_y(x)}_{\text{given by} \int_y \mathbb{1}_0^n \text{ at the only nonvanishing degree} m=0.}\big|_m\frac{|h|^{\gamma-\beta+|\mathfrak{s}|}_\mathfrak{s}}{|h|^{\gamma-\alpha+|\mathfrak{s}|}_\mathfrak{s}}  \frac{\big|\Gamma_{x+h,x}f(x)-f(x+h)\big)|_\beta}{|h|^{\gamma-\beta+|\mathfrak{s}|}_\mathfrak{s}}\\
= &\big(\underbrace{\int \mathbf{1}_0^n }_{\lesssim 2^{-|\mathfrak{s}|n}}\big)\frac{\big|\Gamma_{x+h,x}f(x)-f(x+h)\big)|_\alpha}{|h|^{\gamma-\beta+|\mathfrak{s}|}_\mathfrak{s}} \, .
\end{align*}
Integrating this yields the desired bound.
For the first summand we proceed similarly to above:
\begin{align*}
&\Bigg\| \big|\int_{y} \big({\mathbf{1}^n_y(x+h)-\Gamma_{x+h,x}\mathbf{1}^n_y(x)}\big)\star\big(\Gamma_{x+h,y} f(y) -f(x+h)\big)\big|_\alpha\Bigg\|_{L^p(dx)}\\
&=\Bigg\|\big|\int_{y} \big({\mathbf{1}^n_{x-y}(x+h)-\Gamma_{x+h,x}\mathbf{1}^n_{x-y}(x)}\big)\star\big(\Gamma_{x+h,x-y} f(x-y) -f(x+h)\big)\big|_\alpha\Bigg\|_{L^p(dx)}\\
&\leq \sum_{m+\beta=\alpha} \int_{y}\Bigg\|\big| {\mathbf{1}^n_{x-y}(x+h)-\Gamma_{x+h,x}\mathbf{1}^n_{x-y}(x)}\big|_m \big|\Gamma_{x+h,x-y} f(x-y) -f(x+h)\big)\big|_\beta\Bigg\|_{L^p(dx)}\\
&= \sum_{m+\beta=\alpha} \int_{y}\Bigg\|\big| {\mathbf{1}^n_{x}(x+y+h)-\Gamma_{x+y+h,x+y}\mathbf{1}^n_{x}(x+y)}\big|_m \big|\Gamma_{x+y+h,x} f(x) -f(x+y+h)\big)\big|_\beta\Bigg\|_{L^p(dx)}\\
&= \sum_{m+\beta=\alpha} \int_{y}\Bigg\|\big| {\mathbf{1}^n_{x}(x+y)-\Gamma_{x+y,x+y-h}\mathbf{1}^n_{x}(x+y-h)}\big|_m \big|\Gamma_{x+y,x} f(x) -f(x+y)\big)\big|_\beta\Bigg\|_{L^p(dx)}\\
&= \sum_{m+\beta=\alpha} \int_{y}\Bigg\|\big| {\mathbf{1}^n_{0}(y)-\Gamma_{y,y-h}\mathbf{1}^n_{0}(y-h)}\big|_m \big|\Gamma_{x+y,x} f(x) -f(x+y)\big)\big|_\beta\Bigg\|_{L^p(dx)}\\
&= \sum_{m+\beta=\alpha} \int_{y}\Bigg\|\big| {\mathbf{1}^n_{0}(y)-\Gamma_{y,y-h}\mathbf{1}^n_{0}(y-h)}\big|_m \ |y|_{\mathfrak{s}}^{\gamma-\beta+|\mathfrak{s}|} \frac{\big|\Gamma_{x+y,x} f(x) -f(x+y)\big)\big|_\beta}{|y|_{\mathfrak{s}}^{\gamma-\beta+|\mathfrak{s}|}}\Bigg\|_{L^p(dx)}\\
&= \sum_{m+\beta=\alpha} \int_{y}\big| {\mathbf{1}^n_{0}(y)-\Gamma_{y,y-h}\mathbf{1}^n_{0}(y-h)}\big|_m \ |y|_{\mathfrak{s}}^{\gamma-\beta+|\mathfrak{s}|} \Bigg\|\frac{\big|\Gamma_{x+y,x} f(x) -f(x+y)\big)\big|_\beta}{|y|_{\mathfrak{s}}^{\gamma-\beta+|\mathfrak{s}|}}\Bigg\|_{L^p(dx)}\\
&\leq \sum_{m+\beta=\alpha} \Big\|\big| {\mathbf{1}^n_{0}(y)-\Gamma_{y,y-h}\mathbf{1}^n_{0}(y-h)}\big|_m \ |y|_{\mathfrak{s}}^{\gamma-\beta+|\mathfrak{s}|}\Big \|_{L^\infty(dy)} \Big\| f \Big\|_{d^{\gamma,\beta}_{p,q}}\\
&\leq \sum_{m+\beta=\alpha} 2^{-n(\gamma-\beta+|\mathfrak{s}|)}\Big\|\big| {\mathbf{1}^n_{0}(y)-\Gamma_{y,y-h}\mathbf{1}^n_{0}(y-h)}\big|_m \Big \|_{L^\infty(dy)} \Big\| f \Big\|_{d^{\gamma,\beta}_{p,q}} \, .\\
\end{align*}
If we devide the first and last line of the previous estimates by $|h|_\mathfrak{s}^{\gamma-\alpha+|\mathfrak{s}|}$ and integrating
\begin{align*}
&\int_h \Big(\frac{\big\| \big|\int_{y} \big({\mathbf{1}^n_y(x+h)-\Gamma_{x+h,x}\mathbf{1}^n_y(x)}\big)\star\big(\Gamma_{x+h,y} f(y) -f(x+h)\big)\big|_\alpha\big\|_{L^p(dx)}}{\|h\|_\mathfrak{s}^{\gamma-\alpha}}\Big)^q \frac{dh}{|h|_{\mathfrak{s}}^{|\mathfrak{s}|}}\\
&\lesssim \sum_{m+\beta=\alpha} 2^{-qn(\gamma-\beta+|\mathfrak{s}|)} \int_h \Big(\frac{\big\|\big| {\mathbf{1}^n_{0}(y)-\Gamma_{y,y-h}\mathbf{1}^n_{0}(y-h)}\big|_m \big \|_{L^\infty(dy)}}{{\|h\|_\mathfrak{s}^{\gamma-\alpha}}}\Big)^q \frac{dh}{|h|_{\mathfrak{s}}^{|\mathfrak{s}|}} \Big\| f \Big\|^q_{d^{\gamma,\beta}_{p,q}}\\
&= \sum_{m+\beta=\alpha} 2^{-qn(\gamma-\beta+|\mathfrak{s}|)} \int_h \Big(\frac{\big\|\big| {\mathbf{1}^n_{0}(y)-\Gamma_{y,y-h}\mathbf{1}^n_{0}(y-h)}\big|_m \big \|_{L^\infty(dy)}}{{\|h\|_\mathfrak{s}^{(\gamma-\beta+\epsilon)-m}}}\Big)^q \frac{dh}{|h|_{\mathfrak{s}}^{|\mathfrak{s}|-q\epsilon}} \Big\| f \Big\|^q_{d^{\gamma,\beta}_{p,q}}\\
&\leq \sum_{m+\beta=\alpha} 2^{-qn(\gamma-\beta+|\mathfrak{s}|)} 
\Big\| \mathbf{1}^n_{0} \Big\|^q_{d^{\gamma-\beta+\epsilon,m}_{\infty,\infty}}
 \underbrace{\int_{|h|_\mathfrak{s}\leq C 2^{-n}}\frac{dh}{|h|_{\mathfrak{s}}^{|\mathfrak{s}|-q\epsilon}}}_{\lesssim 2^{-nq\epsilon}} \Big\| f \Big\|^q_{d^{\gamma,\beta}_{p,q}}\\
 &\leq \sum_{m+\beta=\alpha} 2^{-qn(\gamma-\beta+|\mathfrak{s}|+\epsilon)} 
\Big\| \mathbf{1}^n_{0} \Big\|^q_{d^{\gamma-\beta+\epsilon,m}_{\infty,\infty}}
\Big\| f \Big\|^q_{d^{\gamma,\beta}_{p,q}} \, .\\
\end{align*}
For $\phi\in \mathcal{C}_c^\infty$ and $\lambda\in (0,1)$  the scaling  property $\|\Phi(\frac{\cdot}{\lambda})\|_{d^{\gamma,m}_{\infty,\infty}}\lesssim \lambda^{-\gamma}\|\Phi\|_{d^{\gamma,m}_{\infty,\infty}}$ holds with the implicit constant only depending on the support of $\phi$. (Again $\Phi$ denotes the canonical lift of $\phi$.) Using this we obtain
\begin{align*}
&\int_h \Big(\frac{\big\| \big|\int_{y} \big({\mathbf{1}^n_y(x+h)-\Gamma_{x+h,x}\mathbf{1}^n_y(x)}\big)\big(\Gamma_{x+h,y} f(y) -f(x+h)\big)\big|_\alpha\big\|_{L^p(dx)}}{\|h\|_\mathfrak{s}^{\gamma-\alpha}}\Big)^q \frac{dh}{|h|_{\mathfrak{s}}^{|\mathfrak{s}|}}\\
&\lesssim \sum_{m+\beta=\alpha} 2^{-qn(\gamma-\beta+|\mathfrak{s}|+\epsilon)} 2^{qn(\gamma-\beta+\epsilon)}
\Big\| \mathbf{1}_{0} \Big\|^q_{d^{\gamma-\beta+\epsilon,m}_{\infty,\infty}}
\Big\| f \Big\|^q_{d^{\gamma,\beta}_{p,q}}\\
&=\sum_{m+\beta=\alpha} 2^{-qn|\mathfrak{s}|} 
\Big\| \mathbf{1}_{0} \Big\|^q_{d^{\gamma-\beta+\epsilon,m}_{\infty,\infty}}
\Big\| f \Big\|^q_{d^{\gamma,\beta}_{p,q}} \, .\\
\end{align*}
\end{proof}
\begin{remark}
Let us note the interesting fact that the convergence rate of this approximation depends exponentially on the effective dimension $|\mathfrak{s}|$.
\end{remark}
\providecommand{\bysame}{\leavevmode\hbox to3em{\hrulefill}\thinspace}
\providecommand{\MR}{\relax\ifhmode\unskip\space\fi MR }
\providecommand{\MRhref}[2]{%
  \href{http://www.ams.org/mathscinet-getitem?mr=#1}{#2}
}
\providecommand{\href}[2]{#2}

\end{document}